\documentclass[12pt]{article}
\usepackage[latin1]{inputenc}
\usepackage[english]{babel}
\usepackage{amsmath}
\usepackage{amsthm}
\usepackage{amsfonts}
\usepackage{amssymb}
\usepackage{enumitem}
\usepackage{graphicx}
\usepackage[hmargin=2cm,paper=a4paper]{geometry}
\usepackage{mathrsfs}
\usepackage{xcolor}


\usepackage[square,numbers]{natbib}
\author{Jean-Dominique Deuschel\footnote{Technische Universit\"at Berlin, Strasse des 17. Juni 136, 10623 Berlin, Germany {\tt deuschel@math.tu-berlin.de}}, 
Henri Elad Altman\footnote{Universit\'{e} Sorbonne Paris Nord, LAGA, CNRS UMR 7539, Institut Galil\'ee, 99 Av. J.-B.
Cl\'ement, 93430 Villetaneuse, France. {\tt elad-altman@math.univ-paris13.fr}}}
\title{Scaling Limits of Line Models in Degenerate Environment}
\date{}

\theoremstyle{plain}
\newtheorem{thm}{Theorem}[section]
\newtheorem{theorem}[thm]{Theorem}
\newtheorem{lemma}[thm]{Lemma}
\newtheorem{proposition}[thm]{Proposition}
\newtheorem{corollary}[thm]{Corollary}

\newtheorem{assumption}[thm]{Assumption}
\theoremstyle{definition}
\newtheorem{definition}[thm]{Definition}

\newtheorem{remark}[thm]{Remark}
\numberwithin{equation}{section}

\begin{document}
\maketitle


\abstract{We consider a 2-dimensional model of random walk in random environment known as line model. The environment is described by two independent families of i.i.d. random variables dictating rates of jumps in vertical, respectively horizontal directions, and whose values are constant along vertical, respect. horizontal lines. When jump rates are heavy-tailed in one of the directions, and given by a stable distribution, we prove that the random walk becomes superdiffusive in that direction, with an explicit scaling limit written as a two-dimensional Brownian motion time-changed (in one of the components) by a Kesten-Spitzer process as introduced in \cite{kesten1979limit}. In the case of a fully degenerate environment, a sufficient condition for non-explosion is provided, and conjectures on the associated scaling limit are formulated.}

\section{Introduction}

\subsection{Outline of the article}

This paper deals with a time-continuous random walk $(X(t))_{t \geq 0}$ on $\mathbb{Z}^2$ in the random environment $\omega$. The walk to nearest neighbors is generated by the (random) operator $\mathcal{L}^\omega$ acting on test functions $f: \mathbb{Z}^2 \to \mathbb{R}$ as
$$\mathcal{L}^\omega f(x) = \sum_{e : |e|=1} \omega(x,x+e) \left(f(x+e) - f(x)\right), $$
with symmetric conductances 
$$0<\omega(x,y)=\omega(y,x)<\infty.$$
The law $\mathbb{P}$ of the environment  is assumed to be stationary and ergodic with respect to the shifts $\{\theta_x\}$ on $\mathbb{Z}^2$.
For fixed environment $\omega$, we denote by $P^\omega_0$ the quenched law of the walk with $P^\omega_0(X(0)=0)=1.$
We are concerned with the question whether, for $\mathbb{P}$ a.e. $\omega$, under $P^\omega_0$,  no explosion occurs and, whether the quenched invariance principle (QIP) holds, that is the diffusively rescaled walk 
$$X^{(T)}(t)\equiv T^{-1/2} X(Tt), \qquad t \geq 0,$$
converges in law as $T \to \infty$ to a non degenerate Brownian motion.

In the past few years, considerable progress has been made in weakening the assumptions on environments ensuring a QIP for the associated random walk. Thus, in \cite{Andres2015invariance}, QIP is obtained for random conductance model on $\mathbb{Z}^d$, $d \geq 2$, for environments that are degenerate (i.e. non elliptic) but are ergodic, and satisfy appropriate moment conditions. The more recent work \cite{BISKUP} showed that, when $d =2$, only 1st moment condition for $\omega$ and $1/\omega$, namely
$$\mathbb{E}[\omega(x,y)]<\infty,\quad \mathbb{E}[1/\omega(x,y)]<\infty,$$
 is sufficient for a QIP to hold. It should be noted, that in case of i.i.d. conductances defined on $\mathbb{Z}^d$, with $d \geq 2$, no such moment condition is required for the QIP, cf.  \cite{ABDH},
however for general ergodic $\mathbb{P}$ the moment condition is relevant for both non explosion and QIP. Generalizations of the results of \cite{Andres2015invariance} to environments that are non-symmetric have been obtained in the recent article \cite{deuschel2024quenchedinvarianceprinciplerandom}. We also refer the reader to \cite[Chap. 3]{komorowski2012fluctuations}, which establishes central limit theorems in $L^1$ with respect to the environment.
{

In contrast to the aforementioned works, this paper explores environments for which we \textbf{do not} observe a QIP. More specifically, we consider a random conductance model on $\mathbb{Z}^2$, with conductances that are constant along horizontal and vertical lines, and given by
\begin{align*}
  \omega(x,x +e_1) &= \omega(x,x -e_1) = H(x_2)(\omega) > 0, \\
  \omega(x,x +e_2) &= \omega(x,x -e_2) = V(x_1)(\omega) > 0,
\end{align*}
for $x=(x_1,x_2) \in \mathbb{Z}^2$, where $e_1=(1,0)$ and $e_2=(0,1)$, and where $(H(k))_{k \in \mathbb{Z}}$ and $(V(k))_{k \in \mathbb{Z}}$ are two independent families of i.i.d. positive random variables. Of course this implies the ergodicity of $\mathbb{P}$, which is however not directional ergodic.

Let us denote by $X(t) = (X_1(t),X_2(t))$, $t \geq 0$, the random walk with rates as above. Constant conductances along lines imply that  both components $X_1$ and $X_2$ have no drift. As a consequence, introducing for all $R>0$ the stopping time 
$$\tau_R := \inf \{t \geq 0: |X_1(t)| + |X_2(t)| \geq R\},$$ 
then the stopped processes 
\[X_i^{\tau_R}(t) := X_i(t\wedge \tau_R), \qquad i =1,2,\] 
are martingales with predictable quadratic variation 
$$<X^{\tau_R}_1>(t) = \int_0^{t \wedge \tau_R} 2H(X_2(s))\,ds, \qquad <X^{\tau_R}_2>(t) = \int_0^{t \wedge \tau_R} 2V(X_1(s))\,ds,$$
while we have $<X^{\tau_R}_1,X^{\tau_R}_2>(t)=0$ for all $t \geq 0$. The additive functionals above can be interpreted as random walks in random scenery, although here the random walk is dependent of the ambient scenery.

Regarding the jump rates, we assume $\{H(x_2), x_2\in \mathbb{Z}\}$ and $\{V(x_1), x_1\in \mathbb{Z}\}$ to be two independent families of i.i.d. positive random variables, with tails given, as $L \to \infty$, by
\[ \mathbb{P}(H(0)>L) = c_H L^{-\alpha_1(1 + o(1))}, \quad \mathbb{P}(V(0)>L) = c_V L^{-\alpha_2(1 + o(1))},  \]
 for some exponents $\alpha_i>0$ and some constants $c_H, c_V >0$. In the statement of our scaling limit results in section \ref{sec:scaling} we will in fact define the random variables $H(x_2)$, $x_2 \in \mathbb{Z}$, in terms of an underlying stable process, see Assumption \ref{assump:H_V_tail_bis}.
  
  In case $\alpha_1,\alpha_2>1$, we have $\mathbb{E}[H(x_2)]<\infty$ and $\mathbb{E}[V(x_1)]<\infty$, then no explosion occurs and the QIP holds.
  
  In this paper we assume either 
  $$\alpha_1<1\text{ and  }\alpha_2<1, \qquad \hbox{so that} \qquad \mathbb{E}[H(x_2)]=\infty, \, \, \mathbb{E}[V(x_1)]=\infty, \qquad \text{(case 1)}$$
   or
   $$\alpha_1<1\text{ and  }\alpha_2>1,\qquad \hbox{so that} \qquad \mathbb{E}[H(x_2)]=\infty, \, \, \mathbb{E}[V(x_1)]<\infty,\qquad \text{(case 2)}. $$

  The case where the random variables $V(x_1)$ are a deterministic constant was the object of 2 joint papers \cite{DF1} and \cite{DF2}, where heat kernel estimates were derived.
  
  In neither cases 1,2 is it a priori clear whether non explosion occurs. Also the random walk might travel very far  along lines with very large conductances, thus we expect
  a super diffusive behaviour, i.e. a singular scaling.
  
   In order to identify the correct scaling let us suppose that, as above 
   $X(t)=(X_1(t),X_2(t))$. We assume that $X_1$ and $X_2$ do not explode in finite time, so they are local martingales with quadratic variation $2A_1^H$ and $2A_2^V$, where
   \begin{equation}
   \label{eq:qv.a1a1.intro}
   A_1^H(t) := \int_0^{t} 2H(X_2(s))\,ds, \quad A_2^V(t) := \int_0^{t} 2V(X_1(s))\,ds, \qquad t \geq 0.  
   \end{equation}
  Moreover assume that, for some $\gamma_i\ge 1/2$,
  \begin{equation}
  \label{eq:rescaled.process.case1}
  X^{(T)}(t)=(T^{-\gamma_1}X_1(T t), T^{-\gamma_2} X_2(Tt)),
  \end{equation}
  converges in law, as $T \to \infty$, to some limiting process $Z(t)=(Z_1(t),Z_2(t))$. Then with
 $$\delta_1=1-{\gamma_2}+{\gamma_2}/{\alpha_1},\qquad \delta_2=1-{\gamma_1}+{\gamma_1}/{\alpha_2},$$
we can heuristically apply Kesten-Spitzer's scaling limit results of \cite{kesten1979limit} for random walks in random scenery (originally written for discrete time $1/\gamma_i$ stable random walks). We expect that, as $T \to \infty$, 
 $$(T^{-\delta_1}A^H_1(T t), T^{-\delta_2} A^V_2(T t)), \qquad t \geq 0,$$
 converges to a couple of monotone increasing  self similar processes $(\Delta_1(t),\Delta_2(t))_{t \geq 0}$.
 Computing the corresponding quadratic variations
 $$(<X_1^{(T)}>(t),<X_2^{(T)}>(t))=2(T^{-2\gamma_1}A^H_1(T t),T^{-2\gamma_2}A^V_2(T t)),$$
 in view of the above convergence, we get the following equality
 $$2\gamma_1=\delta_1=1-\gamma_2+\frac{\gamma_2}{\alpha_1},\qquad2\gamma_2=\delta_2=1-\gamma_1+\frac{\gamma_1}{\alpha_2}.$$
 This is a simple 2-d linear equation: set 
 $$\epsilon_1=\frac{{1}/{\alpha_1}-1}{2}\vee 0,\qquad\epsilon_2=\frac{{1}/{\alpha_2}-1}{2}\vee 0, $$
 then, provided 
 $$\epsilon_1\cdot \epsilon_2<1,$$
  (needed in order to get $\gamma_i>0$), we get
 $$\gamma_1=\frac{1+\epsilon_1}{2(1-\epsilon_1\cdot\epsilon_2)}\ge 1/2, \qquad \gamma_2=\frac{1+\epsilon_2}{2(1-\epsilon_1\cdot\epsilon_2)}\ge 1/2.$$
It should be noted that the processes $X_1$ and $X_2$ are dependent, thus the additive functionals of \eqref{eq:qv.a1a1.intro} above do not fall within the scope of \cite{kesten1979limit}, in particular we cannot expect convergence of the local times of $X_1$ and $X_2$! Thus the above derivation of the scaling exponents $\gamma_1$ and $\gamma_2$ is non-rigorous, however it turns out to provide a good guess for conditions of non-explosion of our random walk.

Thus, our first result, Proposition \ref{prop:X_is_conservative}, shows that the condition 
  $$\epsilon_1\cdot \epsilon_2<1$$
 implies that for $\mathbb{P}$ a.e. $\omega$, under $P^\omega_0$, no explosion occurs. 
 Since 
 $$\lim_{\epsilon_1\cdot\epsilon_2\uparrow 1}\,\gamma_i\ge\lim_{\epsilon_1\cdot\epsilon_2\uparrow 1}\,\frac{1}{1-\epsilon_1\epsilon_2}=\infty,$$ 
 we expect that when $\epsilon_1\cdot \epsilon_2>1$ explosion occurs. Moreover with $\gamma_i^+>\gamma_i\ge 1/2$ and 
 \[\begin{split}
 \label{eq:rescaled.process.case1.supercit}
 X^{T,+}(t)=(T^{-\gamma^+_1}X_1(T t),T^{-\gamma^+_2}X_2(Tt)),
 \end{split}\]
 we show that  for $\mathbb{P}$ a.e. $\omega$, for all $t>0$, the convergence
 $$\lim_{T \to \infty} \sup_{0\le s \le t} |X_i^{T,+}(s)|=0, \qquad i =1,2,$$
holds in $P^\omega_0$-probability, see Proposition \ref{prop:over_scaling} below.

In case 2 we can be more precise: in that case since $\epsilon_2=0$,  no explosion occurs for all $\alpha_1>0$.
Moreover, assuming that the random variables $H(x)$, $x \in \mathbb{Z}$, follow a stable distribution of index $\alpha_1$, 
then setting 
$$\gamma_1= \frac{1}{2}\left(\frac{1}{2 \alpha_1} + \frac{1}{2}\right)>1/2,\qquad \gamma_2=1/2,$$ 
and performing an appropriate rescaling of the environment together with the random walk, then for $\mathbb{P}$-a.e. realisation of the environment, as $T \to \infty$, the process 
   $$X^{(T)}(t)=(T^{-\gamma_1}X_1(Tt),T^{-1/2} X_2(Tt)), \qquad t \geq 0,$$
converges in law, and for the Skorohod topology, to the process 
$$(B_1\circ \Delta^{\mathcal{H}} (B_2),B_2)$$
where $B_1, B_2$ are two independent Brownian motions and
\begin{equation}
\label{eq:def.delta_intro}
\Delta^{\mathcal{H}} (B_2) (t) =\int_{\mathbb{R}} L^{B_2}(t,x) d\mathcal{H}(x), \qquad t \geq 0.
\end{equation}
Here $(\mathcal{H}(x))_{x \in \mathbb{R}}$ is a two-sided stable subordinator of index $\alpha_1$ and, for all $(t,x) \in \mathbb{R}_+ \times \mathbb{R}$, $L^{B_2}(t,x)$ is the local time of $B_2$, up to time $t$, at the point $x$. 
See Theorem \ref{thm:conv} below for the precise statement. 

Processes such as $\Delta^{\mathcal{H}} (B_2)$ appear in the work of Kesten and Spitzer \cite{kesten1979limit} to describe the scaling limits of the models of random walk in random scenery considered therein. 
Since in case 2, $\mathbb{E}[V(x_1)]<\infty$, the  convergence of $X_2^{(T)} (t)\equiv T^{-1/2} X_2(Tt)$ to $B_2(t)$ follows from a martingale scaling limit theorem, wherefrom the results of \cite{kesten1979limit} heuristically entail the convergence of the first additive functional $A^H_1(t)$ of \eqref{eq:qv.a1a1.intro}, when properly rescaled, to \eqref{eq:def.delta_intro}. 
However, in contrast to \cite{kesten1979limit} a major caveat arises in our setting, which is the lack of independence of the process $(X_2(t))_{t \geq 0}$ with the random variables $H(k)$, $k \in \mathbb{Z}$. In addition, the convergence of the local times of $X_2^{(T)}$ towards those of $B_2$ is not clear at all. To overcome these difficulties, we resort to an appropriate time-change, which transforms the diffusive component $X_2$ into a \textit{simple random walk}. Undoing the time change hinges on an appropriate application of Birkhoff's ergodic theorem for the environment seen from the walker and relies on the random variables $V(k)$, $k \in \mathbb{Z}$, being integrable. 

In case 1 we expect that $X^{(T)}$ converges in law, for the Skorohod topology, to a process $(Z_1,Z_2)$ of the form
\begin{equation}
\label{eq:limit.process.case1}
(Z_1(t),Z_2(t))=(B_1\circ \Delta^\mathcal{H}(Z_2)(t),B_2\circ \Delta^\mathcal{V}(Z_1)(t)), \qquad t \geq 0,
\end{equation}
where
\begin{equation}
\label{eq:def.ahav.limit}
( \Delta^\mathcal{H}(Z_2)(t),  \Delta^\mathcal{V}(Z_1)(t)) := \left(\int_{\mathbb{R}} L^{Z_2}(t,x) \, \mathcal{H} (dx), \int_{\mathbb{R}} L^{Z_1}(t,x) \ \mathcal{V}(dx)\right),
 \end{equation}
where $L^{Z_i}(t,x)$ are the occupation densities of $Z_i, i \in \{1,2\}$, and $(\mathcal{H}(x))_{x \in \mathbb{R}}, (\mathcal{V}(x))_{x \in \mathbb{R}}$ are independent, double-sided stable subordinators of respective indices $\alpha_1, \alpha_2$, that are independent of $B_1,B_2$. Formally, such a process would correspond to a solution of the system of singular SDEs
\[\begin{split}
Z_1(t) = \int_0^t \sqrt{\dot{\mathcal{H}}(Z_2(s))} \, dW_1(s) \\
Z_2(t) = \int_0^t \sqrt{\dot{\mathcal{V}}(Z_1(s))} \, dW_2(s) 
\end{split}\]
where $W_1, W_2$ are two independent Brownian motions. These SDEs are very singular, as $\dot{\mathcal{H}}$ and $\dot{\mathcal{V}}$ are only distributions, thus the very existence of such a process $(Z_1,Z_2)$ is highly non-trivial. The corresponding Dirichlet form on $L^2(\mathbb{R}^2,dx)$ is associated with the
bilinear form
$$\mathcal{E}(u,v)=\int_{\mathbb{R}^2} \partial_{x_1}u(x) \partial_{x_1}v(x) d \mathcal{H}(x_2) dx_1
+ \partial_{x_2}u(x) \partial_{x_2}v(x) d \mathcal{V} (x_1) dx_2,\quad
u,v\in C^1_c(\mathbb{R}^2),$$
cf. \cite{CF}, \cite{FO}. We also fall short of proving tightness of the rescaled process \eqref{eq:rescaled.process.case1}, as our result for the convergence of \eqref{eq:rescaled.process.case1.supercit} (see Proposition \ref{prop:over_scaling}) based on quenched moment estimates is too rough to set $\gamma^+_i=\gamma_i$ in the scaling.


Besides the (variable-speed) line model random walk $(X(t))_{t \geq 0}$ considered above, we also consider in this work the associated constant speed random walk $(\hat X(t))_{t \geq 0}$ generated by 
 $$\hat{\mathcal{L}}^\omega f(x) = \sum_{e : |e|=1}\hat\omega(x,x+e) \left(f(x+e) - f(x)\right), $$
where
$$\hat\omega(x,x\pm e_1)=\frac{H(x_2)}{H(x_2)+V(x_1)},\qquad \hat\omega(x,x\pm e_2)=\frac{V(x_1)}{H(x_2)+V(x_1)},$$
which corresponds to a simple time change of the original walk $X$, namely we can construct $\hat{X}$ as
$\hat X(t)=X\circ {J^{H,V}}^{-1}(t)$, where
$${J^{H,V}}^{-1}(t)=\inf\{s>0: J^{H,V}(s) \ge t\}, \qquad t \geq 0, $$
is the inverse of the increasing function $J^{H,V}(s) := A^H_1(s)+A^V_2(s)$, $s \geq 0$ (see \eqref{eq:qv.a1a1.intro}).
\newline
In case 2, assuming again the random variables $H(k)$, $k \in \mathbb{Z}$, to follow an $\alpha_1$-stable distribution, then the random walk is trapped along the vertical direction and $\hat X_2$ sub-diffusively rescaled converges to a FIN diffusion as considered in \cite{FIN} and \cite{benarous2005},  whereas in the horizontal direction $\hat X_1$ rescales diffusively. More precisely
with $\gamma_1=\frac{1}{2} (\frac{1}{2 \alpha_1}+\frac{1}{2})$, (so that we have $1/(4\gamma_1)<1/2$), set
$$\hat X^{(T)}(t)=(T^{-1/2}\hat X_1(t/\epsilon),T^{-1/(4\gamma_1)} \hat X_2(T t)), \qquad t \geq 0.$$
Then, performing an appropriate rescaling of the environment together with the random walk, for $\mathbb{P}$ a.e. $\omega$, the process $(\hat X^{(T)}(t))_{t \geq 0}$ converges in law for the Skorohod topology, as $T \to \infty$, to $(B_1(t),B_2\circ \Delta^\mathcal{H}(B_2)^{-1}(t))$ where  $B_1, B_2$ are two independent Brownian motions and
$$ \Delta^\mathcal{H}(B_2)^{-1}(t)=\inf\{s>0: \Delta^\mathcal{H}(B_2)(s) \ge t\},$$
is the inverse of the process $\Delta^\mathcal{H}(B_2)(s) =\int_{\mathbb{R}}L^{B_2}(s,x)d\mathcal{H}(x)$, $s \ge 0$, with $(\mathcal{H}(x))_{x \in \mathbb{R}}$ a two-sided stable subordinator of index $\alpha_1$, see Theorem \ref{thm:limit_csrw} below.

 As for the variable-speed random walk, the scaling limit of $(\hat{X}(t))_{t \geq 0}$ in case 1 remains open but assuming as above that $X^{(T)}(t)=(T^{-\gamma_1}X_1(Tt),T^{-\gamma_2}X_2(Tt)),$
 converges to $(Z_1(t),Z_2(t))$ of the form \eqref{eq:limit.process.case1}, then if $\alpha_1<\alpha_2<1$, i.e. $\gamma_1<\gamma_2$,  we expect that
 $$\hat X^{(T)}(t)=(T^{-1/2} \hat{X}_1(Tt),T^{-\gamma_2/(2\gamma_1)}\hat X_2(Tt)), \qquad t \geq 0,$$
 converges as $T \to \infty$ to $(B_1(t),B_2\circ(\Delta^\mathcal{V}(Z_1)\circ \Delta^\mathcal{H}(Z_2)^{-1}(t)))$, where $\Delta^\mathcal{V}(Z_1)$ and $\Delta^\mathcal{H}(Z_2)$ are as in \eqref{eq:def.ahav.limit} and
 \[ \Delta^\mathcal{H}(Z_2)(t) = \inf\{s>0: \Delta^\mathcal{H}(Z_2)(s) \ge t\}, \qquad t \geq 0.  \]
 
\subsection{Outline of the article}

In section 2 we first  introduce the line model in detail and prove a few basic properties thereof. Section 3 provides a non-explosion criterion (Proposition \ref{prop:X_is_conservative}) which we believe to be optimal. Then Section 4 proves, in the semi-degenerate case where $\alpha_1<1$ but $\alpha_2 > 1$, i.e. in case 2, scaling limit results for the line model (Theorem \ref{thm:conv}) as well as for the constant-speed variant thereof (Theorem \ref{thm:limit_csrw}). Section 5 is an appendix containing a few definitions and technical results needed in our proofs. 
 \subsection{Notations}
 
Throughout the article, for all $x \in \mathbb{Z}^2$, we shall denote by $\|x\| := |x_1| + |x_2|$ the $1$-norm of $x$. We will denote by $C := C([0,\infty))$ the space of continuous, real-valued functions on $[0,+\infty)$, which we shall equip with the topology of uniform convergence on finite intervals. We shall denote by $D=D([0,\infty))$ the space of c\`{a}dl\`{a}g functions on $[0,+\infty)$, equipped with the Skorohod topology, see Section 16 of \cite{billingsleySndEdition} for the definition.

\paragraph{Acknowledgements:} This project started in collaboration with \textbf{Toyomu Matsuda}, then PhD student at Freie Universit\"{a}t Berlin. This work benefitted from insightful discussions with Nicolas Perkowski. We would like to thank Zhen-Qing Chen for valuable advices on the skew product of diffusion processes. The first author's research has been supported by TRR 388. At the onset of this project the second author was employed as Dirichlet Postdoctoral Fellow in Mathematics at Freie Universit\"{a}t Berlin, with funding provided by MATH+, in the framework of the ``MATH+ EXC 2046'' research project.
 
 \begin{figure}[ht]
        \centering
        \includegraphics[width= 0.6\textwidth]{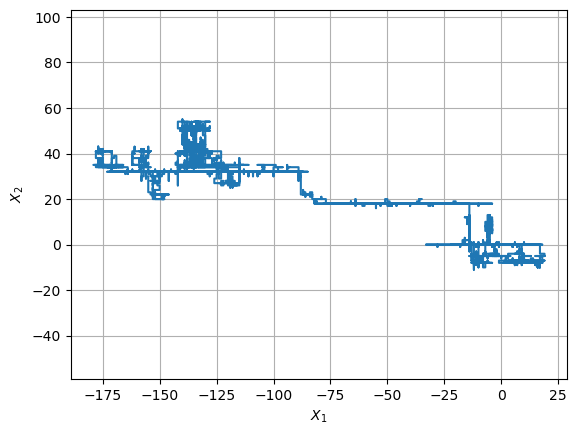}
        \caption{Trajectory of line model for $\alpha_1=0.6$, $\alpha_2=0.9$, after 100 jumps}
\end{figure}
 

\section{The model}
\label{sec:model}
Let $(\mathbb{Z}^2, \mathbb{E}^2)$ be the infinite $2$-dimensional Euclidean lattice, where $\mathbb{E}^2$ denotes the set of edges between any two neigboring points of $\mathbb{Z}^2$:
$$\mathbb{E}^2 =  \{(x,y), x, y \in \mathbb{Z}^2 \, \hbox{s.t.} \, ||x-y||=1\}.$$
The set of environments on this graph is given by
\begin{equation}
\label{def:Omega}
\Omega:= \{\omega: \mathbb{E}^2 \to \mathbb{R}\}
\end{equation}
endowed with the cylindrical $\sigma$-algebra $\mathcal{F}$. We consider two independent families of i.i.d. positive random variables $(H(k))_{k \in \mathbb{Z}}$ and 
$(V(k))_{k \in \mathbb{Z}}$. We define a random environment $\omega$ by setting, for any point $x=(x_1,x_2) \in \mathbb{Z}^2$,
\begin{align*}
  \omega(x,x +e_1) &= \omega(x,x -e_1) = H(x_2)(\omega) > 0, \\
  \omega(x,x +e_2) &= \omega(x,x -e_2) = V(x_1)(\omega) > 0,
\end{align*}
where $e_1=(1,0)$ and $e_2=(0,1)$. 
If $|x - y| \neq 1$, we set $\omega(x, y) := 0$. 
Observe that 
\begin{equation}
\label{eq:random_cond}
\forall x,y \in \mathbb{Z}^2, \qquad \omega(x, y) = \omega(y, x). 
\end{equation}
We denote by $\mathbb{P}$ the probability law thus induced on  $(\Omega, \mathcal{F})$. For all $\omega \in \Omega$, we then introduce a continuous-time random walk $(X(t))_{t \geq 0}$ on $\mathbb{Z}^2$ with generator 
$$\mathcal{L}^\omega f(x) = \sum_{e : |e|=1} \omega(x,x+e) \left(f(x+e) - f(x)\right), $$
and denote by $P^\omega_0$ the law of this walk started from $0$. In particular, due to \eqref{eq:random_cond}, $\mathcal{L}^{\omega}$ is symmetric with respect to the counting measure. This is thus an instance of a random conductance model. Finally we denote by $\mathbf{P} := \int \mathbb{P} (d \omega) \, P^\omega_0$ the annealed probability measure.
\newline
From now on and and until Section 4.1 included, we make the following assumption regarding the environment:
\begin{assumption}\label{assump:H_V_tail}
  We assume:
  \begin{enumerate}
  \item \textbf{Hypothesis (H1)}: there exists $c>0$ such that $H(x) \geq c$ and $V(x) \geq c$ for all $x \in \mathbb{Z}$.
  \item \textbf{Hypothesis (H2)}: there exist exponents $\alpha_1, \alpha_2 > 0$ and constants $c_H, c_V > 0$ such that, when $L \to \infty$,
  \[ \mathbb{P}(H(x_2)>L) = c_H L^{-\alpha_1(1 + o(1))}, \quad \mathbb{P}(V(x_1)>L) = c_V L^{-\alpha_2(1 + o(1))}.  \]
  \end{enumerate}
\end{assumption}

\subsection{Ergodicity of the environment}

The following property will be useful:

\begin{lemma}
    The environment is stationary and ergodic. 
\end{lemma}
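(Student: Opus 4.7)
The plan is to exploit the product structure of the environment. Writing $\omega = (H,V)$ on $\Omega_H \times \Omega_V := \mathbb{R}_+^{\mathbb{Z}} \times \mathbb{R}_+^{\mathbb{Z}}$, the probability measure factors as $\mathbb{P} = \mu_H \otimes \mu_V$, where $\mu_H$ and $\mu_V$ are the laws of the two i.i.d. families. A direct inspection of the definition of $\omega$ shows that the shift $\theta_x$ acts by
\[ \theta_x \cdot (H,V) = (\sigma^{x_2} H, \sigma^{x_1} V), \]
where $\sigma$ denotes the one-step shift on sequences indexed by $\mathbb{Z}$. Stationarity is then immediate: since $\mu_H$ and $\mu_V$ are product measures, they are invariant under $\sigma$, and the product measure $\mathbb{P}$ is therefore invariant under each $\theta_x$.

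For ergodicity, I would fix an $\mathbb{Z}^2$-invariant event $A \in \mathcal{F}$ and look at its $V$-sections $A^V := \{ H \in \Omega_H : (H,V) \in A \}$. The key observation is that the two generators of the action decouple: $\theta_{e_2}$ acts as $\sigma$ on $H$ and trivially on $V$, while $\theta_{e_1}$ acts as $\sigma$ on $V$ and trivially on $H$. From $\theta_{e_2}^{-1} A = A$ one deduces $\sigma^{-1} A^V = A^V$ for every $V$; since $\mu_H$ is an i.i.d. product measure, the shift $\sigma$ is ergodic (it is a Bernoulli shift), hence $\mu_H(A^V) \in \{0,1\}$ for $\mu_V$-a.e.\ $V$.

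Next, I would set $B := \{ V \in \Omega_V : \mu_H(A^V) = 1 \}$, so that Fubini's theorem gives $\mathbb{P}(A) = \mu_V(B)$. The invariance of $A$ under $\theta_{e_1}$ translates, at the level of sections, into $A^V = A^{\sigma V}$ as subsets of $\Omega_H$; in particular $\mu_H(A^V) = \mu_H(A^{\sigma V})$, so $B = \sigma^{-1} B$. Applying ergodicity of $\sigma$ on $(\Omega_V, \mu_V)$ yields $\mu_V(B) \in \{0,1\}$, whence $\mathbb{P}(A) \in \{0,1\}$.

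The only non-cosmetic point is the Fubini step, which requires the usual measurability of $V \mapsto \mu_H(A^V)$; this is standard for a product $\sigma$-algebra and poses no real obstacle. I do not expect a hard step here, but it is worth emphasising that directional ergodicity fails — e.g. an event depending only on $H$ is invariant under the whole subgroup $\mathbb{Z} e_1$ — so the argument genuinely uses both generators $\theta_{e_1}$ and $\theta_{e_2}$ together.
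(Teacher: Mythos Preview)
Your proof is correct. The approach differs from the paper's: the paper establishes ergodicity by a mixing argument along the diagonal, noting that for local functions $f,g$ one has $\mathbb{E}[f\,\tau_{k(e_1+e_2)}g]=\mathbb{E}[f]\mathbb{E}[g]$ once $k$ exceeds twice the diameter of the support, which follows directly from the i.i.d.\ structure of $H$ and $V$. Your argument instead uses a Fubini/sectioning approach, reducing to the ergodicity of the one-dimensional Bernoulli shift on each factor separately. The paper's route is shorter and in fact gives mixing (hence ergodicity) along the diagonal subgroup in one line; your route is slightly longer but more structural, and would go through verbatim if $\mu_H$ and $\mu_V$ were merely ergodic rather than i.i.d.\ product measures. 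Your closing remark on the failure of directional ergodicity is apt and matches a comment made elsewhere in the paper.
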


\begin{proof}
    The sequences $V(x_1)$, $x_1 \in \mathbb{Z}$, and $H(x_2)$, $x_2 \in \mathbb{Z}$, are both i.i.d., hence in particular stationary. 
    Since they are also mutually independent, the stationarity of the environment follows. 
    The proof of ergodicity is standard, noting that for any two functions $f,g \in L^2(\Omega)$ that are local, i.e. that depend only on values of configurations on edges in some finite set $K$, we have
     \[\mathbb{E}[f \tau_{k(e_1+e_2)}g] = \mathbb{E}[f] \, \mathbb{E}[g],  \]
     as soon as $k> 2 \, \hbox{diam}(K)$. Here, for $k \geq 1$ we define $\tau_{k (e_1+e_2)} g \in L^2(\Omega)$ by
    \[(\tau_{k (e_1+e_2)} g)(\omega) := g(\tau_{k (e_1+e_2)} \omega), \qquad \omega \in \Omega.\] 
    See e.g. the reasoning in the 2nd step of \cite[Thm 5.4]{Andres2013HarnackIO} for more details. 
    
\end{proof}

We introduce the process known as \textit{the environment seen from the walker}. Let $\Omega$ be, as in \eqref{def:Omega}, the set of environments, and define, for all $\omega \in \Omega$, the process
\begin{equation}
\label{eq;env_seen_from_partic}
\omega(t) := \tau_{X_t} \omega, \qquad t \geq 0,
\end{equation}
where, for all $\omega \in \Omega$ and $u \in \mathbb{Z}^2$, $\tau_{u} \omega$ denotes the environment $\omega$ shifted by $u$:
$$\forall e \in \mathbb{E}^2, \quad (\tau_u \omega)_e := \omega_{e-u}. $$
Under the annealed law $\mathbf{P} = \int \mathbb{P} (d\omega) \, P^\omega_0$, this defines a Markov process on $\Omega$, with reversible measure $\mathbb{P}$. By ergodicity of the environment, this process is furthermore ergodic, see e.g. \cite[Corollary 2.1.25]{Zeitouni:2004aa}.  


\subsection{Martingale property of the random walk}

Another property that plays a key role in this model is the martingale property of the random walk. We introduce the additive functionals
\[A^H_1 (t) := \int_0^t H(X_2(s)) \, ds, \qquad  A^V_2(t) := \int_0^t V(X_1(s)) \, ds.\]
Let $(\mathcal{F}_t)_{t \geq 0}$ denote the natural filtration: 
\[\mathcal{F}_t := \sigma(\{X_s, s \leq t\}), \qquad t \geq 0,\]
and, for all $R>0$, let $\tau_R : = \inf \{t \geq 0: \|X(t)\| \geq R\}$. Note that $\tau_R$ is a stopping time with respect to $(\mathcal{F}_t)_{t \geq 0}$. 

\begin{lemma}
For all $\omega \in \Omega$, for all $R>0$, the stopped processes 
\[X_i^{\tau_R}(t) := X_i(t\wedge \tau_R), \qquad i =1,2\] 
are martingales for the quenched probability measure $P^\omega_0$ with predictable quadratic variations 
\[<X^{\tau_R}_1>(t)=2A^H_1(t \wedge \tau_R), \quad <X^{\tau_R}_2>(t)=2 A^V_2(t \wedge \tau_R), \qquad t \geq 0. \]
\end{lemma}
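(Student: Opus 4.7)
The plan is to derive both the martingale property and the predictable quadratic variations from Dynkin's formula applied to the coordinate functions $x \mapsto x_i$ and their squares $x \mapsto x_i^2$. Since the stopped walk $X^{\tau_R}$ takes values in the finite set $\{x \in \mathbb{Z}^2 : \|x\| \leq R\}$ on which the conductances $H(x_2), V(x_1)$ take only finitely many values and are bounded below by $c$ thanks to Hypothesis (H1), the walk killed at $\tau_R$ is a well-defined continuous-time Markov chain on a finite state space with bounded rates. Standard theory then gives that for every bounded $f: \mathbb{Z}^2 \to \mathbb{R}$,
\[ M^f(t) := f(X(t \wedge \tau_R)) - f(0) - \int_0^{t \wedge \tau_R} \mathcal{L}^\omega f(X(s))\, ds \]
is a true $P^\omega_0$-martingale with respect to $(\mathcal{F}_t)_{t \geq 0}$.

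First, I apply this with $f(x) = x_1$ (truncated outside $\{\|x\| \leq R+1\}$ if one insists on bounded $f$, which is inconsequential since $X^{\tau_R}$ never leaves the smaller ball). The symmetry $\omega(x, x+e_1) = \omega(x, x-e_1) = H(x_2)$ causes the two $e_1$-terms in $\mathcal{L}^\omega f(x)$ to cancel, while the $e_2$-terms vanish because $f$ does not depend on $x_2$. Hence $\mathcal{L}^\omega f \equiv 0$, and $X_1^{\tau_R}$ is a $P^\omega_0$-martingale; the identical argument with $f(x) = x_2$ settles $X_2^{\tau_R}$.

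Next, I apply Dynkin's formula to $f(x) = x_1^2$. Using $(x_1 \pm 1)^2 - x_1^2 = \pm 2 x_1 + 1$, a direct computation yields
\[ \mathcal{L}^\omega f(x) = H(x_2) \bigl[ (2 x_1 + 1) + (-2 x_1 + 1) \bigr] = 2 H(x_2), \]
the $e_2$-contributions again vanishing since $f$ does not depend on $x_2$. Hence $X_1(t \wedge \tau_R)^2 - 2 A_1^H(t \wedge \tau_R)$ is a $P^\omega_0$-martingale. Combined with the already established martingale property of $X_1^{\tau_R}$, the uniqueness part of the Doob--Meyer decomposition then identifies $\langle X_1^{\tau_R} \rangle(t) = 2 A_1^H(t \wedge \tau_R)$, which is indeed continuous, non-decreasing, null at zero and adapted hence predictable. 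The same computation with $f(x) = x_2^2$ gives $\langle X_2^{\tau_R} \rangle(t) = 2 A_2^V(t \wedge \tau_R)$.

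The only mildly delicate step is promoting the processes coming from Dynkin's formula to genuine martingales rather than local ones. This is however immediate after stopping at $\tau_R$: the process $X_i^{\tau_R}$ is bounded by $R$, its square by $R^2$, and $A_i^H(t \wedge \tau_R) \leq t \cdot \max_{\|x\| \leq R} H(x_2)$, a constant that is $\mathbb{P}$-a.s. finite. Thus all integrands in Dynkin's formula are uniformly bounded on each finite time horizon, and the identities hold as claimed.
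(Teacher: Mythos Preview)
Your proof is correct and follows essentially the same route as the paper: apply Dynkin's formula to (suitably truncated) coordinate functions $x\mapsto x_i$ and their squares, use the symmetry of the conductances to see that $\mathcal{L}^\omega x_i=0$ and $\mathcal{L}^\omega x_i^2=2H(x_2)$ (resp.\ $2V(x_1)$), and observe that stopping at $\tau_R$ makes everything bounded so that the resulting processes are genuine martingales. The only cosmetic differences are that the paper truncates with the indicator $\mathbf{1}_{\{\|x\|\le R\}}$ rather than your $\{\|x\|\le R+1\}$, and that your appeal to Hypothesis~(H1) for the lower bound on the rates is not actually needed here (finiteness on the finite ball suffices).
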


\begin{proof}
To see these claims, let  $\varphi_R: \mathbb{Z}^2 \to \mathbb{R}$ be the bounded function given by $\varphi_R(x) := \mathbf{1}_{\{\|x\| \leq R\}} \, x_1$, $x \in \mathbb{Z}^2$. By Dynkin's formula (see e.g. Theorem 3.32 in \cite{Liggett2010ContinuousTM}), we have
\[\varphi_R(X(t)) - \varphi_R(0) - \int_0^t \mathcal{L}^{\omega} \varphi_R(X(s)) \, ds = M_t, \]
where $(M_t)_{t \geq 0}$ is a c\`{a}dl\`{a}g martingale. By the optional stopping theorem, and using the fact that $\varphi_R(X_t) = X_1(t)$ for all $t \leq \tau_R$ and $\mathcal{L}^{\omega} \varphi_R(X_t) = 0$ for all $t < \tau_R$, we get
\[X_1^{\tau_R}(t) = M_{t \wedge \tau_R}, \qquad t \geq 0, \]
which is a martingale.
    The same holds for $X_2$ similarly. 
    Regarding the expression for the quadratic variation, 
    applying Dynkin's formula with the function $\psi_R(x) = \mathbf{1}_{\{\|x\| \leq R\}} \, (x_1)^2$, then stopping at $\tau_R$ and using the fact that $\varphi_R(X_t) = X_1(t)^2$ for all $t \leq \tau_R$ and $\mathcal{L}^{\omega} \varphi_R(X_t) = 2 H(X_2(t))$ for all $t < \tau_R$, we get that
    \[(X_1(t \wedge \tau_R))^2 - 2 \int_0^{t\wedge \tau_R}   H(X_2(s)) \, \mathrm{d} s\]
     is a martingale, yielding the expression of $\langle X^{\tau_R}_1, X^{\tau_R}_1 \rangle$. The expression for $\langle X^{\tau_R}_2, X^{\tau_R}_2 \rangle$ is obtained similarly.
\end{proof}

\section{Non-explosion}

Before studying the long-time behaviour of the random walk $X$ defined in Section \ref{sec:model}, we need to ensure that this random walk does not blow up 
in finite time for almost all environments. For this purpose, as in the introduction we denote 
\begin{equation}
\label{def:epsilon}
\varepsilon_i := \frac{1}{2} \left(\frac{1}{\alpha_i} -1\right) \wedge 0, \qquad i=1,2. 
\end{equation}
\begin{proposition}\label{prop:X_is_conservative}
  Assume $\varepsilon_1 \varepsilon_2 < 1$. Then, for almost all environments $\omega$, the random walk $X$ is conservative, i.e., for every $t < \infty$ 
  we have 
  \begin{align*}
    P^{\omega}_0(\text{the number of jumps of $X$ in $[0, t]$ is finite}) = 1.
  \end{align*}
\end{proposition}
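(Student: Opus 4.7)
The plan is to reduce non-explosion to showing that, for every fixed $t>0$ and for $\mathbb{P}$-a.e.\ $\omega$, $P^\omega_0(\tau_R\le t)\to 0$ as $R\to\infty$, then to obtain this bound using the martingale properties of $X_i^{\tau_R}$ established in the previous subsection. On $\{\tau_R\le t\}$ one of the two coordinates must satisfy $|X_i^{\tau_R}(t)|\ge R/2$, so Chebyshev combined with the identity $E^\omega_0[(X_i^{\tau_R}(t))^2]=2\,E^\omega_0[A_i(t\wedge\tau_R)]$ (writing $A_1:=A_1^H$ and $A_2:=A_2^V$) gives
\[P^\omega_0(\tau_R\le t)\le \frac{8}{R^2}\bigl(E^\omega_0[A_1^H(t\wedge\tau_R)]+E^\omega_0[A_2^V(t\wedge\tau_R)]\bigr),\]
and everything reduces to controlling the two additive functionals by a strictly sub-quadratic function of $R$.

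First, I would try the crudest bound: $A_1^H(t\wedge\tau_R)\le t\max_{|y|\le R}H(y)$, together with a Borel--Cantelli consequence of (H2) giving $\max_{|y|\le R}H(y)\le R^{1/\alpha_1+o(1)}$ for $\mathbb{P}$-a.e.\ $\omega$ and all large $R$, and symmetrically for $V$. This closes the proof when $\varepsilon_1,\varepsilon_2<1/2$, but the hypothesis $\varepsilon_1\varepsilon_2<1$ is strictly weaker, so a more refined argument is needed. To reach the sharp threshold I would set up a bootstrap mirroring the self-similar scaling heuristic of the introduction. Decomposing $A_1^H(t\wedge\tau_R)=\sum_y H(y)\,L^V(t\wedge\tau_R,y)$ and symmetrically for $A_2^V$, one iteratively upgrades the a priori bound $|X_i^{\tau_R}(s)|\le R$ to $|X_i^{\tau_R}(s)|\lesssim t^{\gamma_i^{(n)}}$, with $\gamma_i^{(n)}\uparrow \gamma_i=(1+\varepsilon_i)/(2(1-\varepsilon_1\varepsilon_2))$. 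At each stage the refined range bound for $X_{3-i}$ truncates the sum to $|y|\lesssim t^{\gamma_{3-i}^{(n)}}$ and extracts an occupation-time factor of order $t^{1-\gamma_{3-i}^{(n)}}$, reproducing the heuristic exponent $\delta_i=1-\gamma_{3-i}+\gamma_{3-i}/\alpha_i$. The condition $\varepsilon_1\varepsilon_2<1$ is precisely what keeps the fixed-point exponents finite, so that $E^\omega_0[A_i(t\wedge\tau_R)]\le t^{\delta_i+o(1)}=o(R^2)$ as soon as $R\gg t^{\gamma_i}$.

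The hard part will be justifying the occupation-time factor $L^V(t\wedge\tau_R,y)\lesssim t^{1-\gamma_2}$ in the quenched sense: the component $X_2$ is neither a simple random walk nor independent of the scenery $(H(y))_{y\in\mathbb{Z}}$, so the local-time estimates underlying \cite{kesten1979limit} do not apply off the shelf. A natural route is to exploit (H1), which gives $V\ge c>0$, to compare $X_2$ with a time-changed simple random walk on $\mathbb{Z}$ and thereby obtain elementary hitting-time control of occupation times at each fixed level; these can then be summed against the iid heavy tail of $H$. The symmetric estimate for $X_1$ is obtained in the same way, and the balancing of the two bounds is where the condition $\varepsilon_1\varepsilon_2<1$ emerges.
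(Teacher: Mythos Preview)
Your proposal is a strategy sketch rather than a proof, and you correctly identify the central obstacle yourself: obtaining quenched occupation-time bounds for $X_2$ in the presence of the scenery $(H(y))_y$ on which $X_2$ depends. The circularity you flag is real, and your suggested route via comparison with a time-changed simple random walk does not obviously break it---the local time of $X_2$ at level $y$ depends on the values $V(X_1(s))$ at the times when $X_2(s)=y$, which in turn depend on $X_1$'s excursions, driven by $H$. A bootstrap on the \emph{range} of $X_i$ from second-moment bounds also only yields statements in probability, and feeding these back into a $\max_{|y|\le R_2^{(n)}}H(y)$ estimate requires conditioning that is awkward to iterate rigorously.

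The paper sidesteps occupation times entirely by a different device: it first performs a time change, replacing $X$ by the walk $X^\ast$ with rates $\omega^\ast(x,y)=\omega(x,y)\,H(x_2)^{-\alpha_1^-}V(x_1)^{-\alpha_2^-}$ for $\alpha_i^-<\alpha_i$. Since $\mathbb{E}[H^{\alpha_1^-}V^{\alpha_2^-}]<\infty$, this time change is admissible, so conservativeness of $X^\ast$ implies that of $X$. The point is that the quadratic variation of $X_1^\ast$ now involves $H(X_2^\ast)^{1-\alpha_1^-}$ rather than $H(X_2^\ast)$, and $\max_{|y|\le k}H(y)^{1-\alpha_1^-}\le C(\omega)(1+k^{2\varepsilon_1^+})$ with $\varepsilon_1^+$ just above $\varepsilon_1$. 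Via Dynkin's formula and Jensen this yields a closed system of integral inequalities in $t$ for $h_2(t)=E^\omega_0[(X_1^\ast(t\wedge\tau_R))^2]$ and $v_2(t)=E^\omega_0[(X_2^\ast(t\wedge\tau_R))^2]$,
\[
h_2(t)\le C(\omega)\Bigl(t+\int_0^t v_2(s)^{\varepsilon_1^+}\,ds\Bigr),\qquad
v_2(t)\le C(\omega)\Bigl(t+\int_0^t h_2(s)^{\varepsilon_2^+}\,ds\Bigr),
\]
at least when both $\varepsilon_i<1$ (a higher-moment variant handles the case where one $\varepsilon_i\ge1$). A comparison lemma then gives $h_2(t),v_2(t)\le C(\omega)(T+T^{A_i})$ on $[0,T]$ with $A_i=(1+\varepsilon_i^+)/(1-\varepsilon_1^+\varepsilon_2^+)$, uniformly in $R$, and non-explosion follows. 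No local-time control is needed: the coupling between the two components is captured purely through moments, and $\varepsilon_1\varepsilon_2<1$ appears as the solvability condition for the system.
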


\begin{remark}
In the framework of random conductance models, a standard tool for obtaining non-explosion is provided by considering the chemical distance $d_{\omega}$ defined as 
  \begin{align*}
    d_{\omega}(x, y) := \inf_{\gamma} \Big\{ \sum_i 1 \wedge \omega(z_i, z_{i+1})^{-1/2} \Big\},
  \end{align*}
  where the infimum is taken over all finite paths $\gamma = (z_0, \ldots, z_l)$ connecting two points $x$ and $y$ of $\mathbb{Z}^2$. 
  According to \cite[Lemma~2.5]{Barlow:2010aa}, it suffices to prove 
  \begin{align}\label{eq:chem_dist_exp_sum}
    \sum_{x \in \mathbb{Z}^2} e^{-d_{\omega}(0, x)} < \infty
  \end{align}
  for almost all $\omega$. With this criterion, by getting a lower bound on $d_{\omega}$ along the lines of \cite[Lemma~1.12]{Andres:2016aa}, one can obtain non-explosion provided
  $\alpha_1 \alpha_2 > \frac{1}{4}$. This sufficient condition is however strictly more restrictive than $\varepsilon_1 \varepsilon_2 < 1$. Actually, a time-change as done in the proof below seems 
  necessary to obtain the latter criterion. 
\end{remark}

To prove Proposition \ref{prop:X_is_conservative}, we will first introduce an auxiliary process, which is a time-change of the process $(X(t))_{t \geq 0}$. To do so let us henceforth introduce  $\alpha_1^{-}, \alpha_2^{-}$ two positive constants such that $\alpha_1^{-} < \alpha_1$ and $\alpha_2^{-} < \alpha_2$. In consistency with \eqref{def:epsilon} we correspondingly define
\begin{equation}
\label{def:epsilon+}
\varepsilon_i^+ := \frac{1}{2} \left(\frac{1}{\alpha_i^{-}}-1 \right) \wedge 0, \qquad i =1,2.
\end{equation}
We consider the random walk $(X^\ast_t)_{t \geq 0}$ with generator
$$\mathcal{L}^{\omega^\ast} f(x) = \sum_{e : |e|=1} \omega^\ast (x,x+e) \left(f(x+e) - f(x)\right), $$
where 
$$\forall x,y \in \mathbb{Z}^2, \quad \omega^\ast(x,y) = \frac{\omega(x,y)}{H(x_2)^{\alpha_1^{-}} \, V(x_1)^{\alpha_2^{-}}},$$
that is, for all $x \in \mathbb{Z}^2$
$$\omega^\ast(x,x \pm e_1) = H(x_2)^{1-\alpha_1^{-}} \, V(x_1)^{-\alpha_2^{-}}, \qquad \omega^\ast(x,x \pm e_2) = H(x_2)^{-\alpha_1^{-}} \, V(x_1)^{1-\alpha_2^{-}}, $$
and $\omega^\ast(x,y)=0$ if $|x-y| \neq 1$.

We will prove the following lemma

\begin{lemma}\label{lem:Xstar_is_conservative}
  Assume $\varepsilon_1 \varepsilon_2 < 1$. Then, for almost all environments $\omega$, the random walk $X^\ast$ is conservative, i.e., for every $t < \infty$ 
  we have 
  \begin{align*}
    P^{\omega}(\text{the number of jumps of $X^\ast$ in $[0, t]$ is finite}) = 1.
  \end{align*}
  \end{lemma}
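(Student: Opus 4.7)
My plan is to run a martingale/Chebyshev argument driven by the quadratic variations of the two coordinates of $X^\ast$, and then to sharpen it by a bootstrap exploiting the fact that the two coordinates cannot simultaneously accumulate large quadratic variation.

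Because the conductances $\omega^\ast$ remain symmetric along each coordinate direction at every site, the same Dynkin's formula argument used in Section~2.2 for $X$ shows that, for every $R>0$, each stopped coordinate $X_i^{\ast,\tau_R^\ast}$ is a square--integrable $P^\omega_0$--martingale with predictable quadratic variation
\[
\langle X_i^{\ast,\tau_R^\ast}\rangle(t) \;=\; 2\int_0^{t\wedge\tau_R^\ast} R_i(X^\ast(s))\,ds,
\]
where $R_1(x):=H(x_2)^{1-\alpha_1^-}V(x_1)^{-\alpha_2^-}$ and $R_2(x):=H(x_2)^{-\alpha_1^-}V(x_1)^{1-\alpha_2^-}$. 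Non-explosion amounts to $\tau_R^\ast\to\infty$ $P^\omega_0$-a.s., and Doob's $L^2$ inequality applied to $\{\tau_R^\ast\leq t\}\subset\{\sup_{s\leq t\wedge\tau_R^\ast}\|X^\ast(s)\|\geq R-1\}$ reduces the problem to showing that $R^{-2}E^\omega_0\bigl[\langle X_1^{\ast,\tau_R^\ast}\rangle(t)+\langle X_2^{\ast,\tau_R^\ast}\rangle(t)\bigr]\to 0$ as $R\to\infty$, for $\mathbb{P}$-a.e.\ $\omega$ and every $t>0$.

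For a first crude estimate I would use hypothesis (H1) to absorb the negative powers of $V,H$ into a constant, and the containment $X^\ast(s)\in B_R$ for $s<\tau_R^\ast$ to replace the rates along the path by environmental maxima:
\[
R_1(X^\ast(s))\leq c^{-\alpha_2^-}\!\!\max_{|k|\leq R}H(k)^{1-\alpha_1^-},\qquad R_2(X^\ast(s))\leq c^{-\alpha_1^-}\!\!\max_{|k|\leq R}V(k)^{1-\alpha_2^-}.
\]
Hypothesis (H2) combined with Borel--Cantelli then yields $\max_{|k|\leq R}H(k)\asymp R^{1/\alpha_1}$ (up to a subpolynomial factor) almost surely, and symmetrically for $V$, producing the quenched a priori bound $E^\omega_0[\langle X_i^{\ast,\tau_R^\ast}\rangle(t)]\lesssim t\,R^{(1-\alpha_i^-)/\alpha_i+o(1)}$. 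Unfortunately, after letting $\alpha_i^-\uparrow\alpha_i$, this gives non-explosion only under $\max(\varepsilon_1,\varepsilon_2)<1$, which is strictly stronger than the product condition $\varepsilon_1\varepsilon_2<1$ we are aiming for.

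The main obstacle, therefore, is to sharpen the naive estimate into the product criterion. The mechanism I would exploit is that large values of $R_1$ are encountered only on rows $x_2$ with atypically large $H(x_2)$, and on such rows $R_2$ is correspondingly small, so $X_2^\ast$ is effectively trapped and cannot in time $t$ reach the extreme $|x_2|\sim R^{1/\alpha_1}$ used in the naive bound. I would formalise this by a bootstrap: the first bound confines $X_2^\ast(s)$ to a strip of effective width $r_2\ll R$, which in turn improves the bound on $\langle X_1^{\ast,\tau_R^\ast}\rangle(t)$ by replacing $\max_{|k|\leq R}H(k)$ with $\max_{|k|\leq r_2}H(k)$; feeding this back on $X_1^\ast$ restricts $X_1^\ast(s)$ to a strip of width $r_1$, and so on. The fixed point of this recursion should recover the scaling exponents $\gamma_i=(1+\varepsilon_i)/(2(1-\varepsilon_1\varepsilon_2))$ derived heuristically in the introduction, which are finite precisely under $\varepsilon_1\varepsilon_2<1$. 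Once this is in place, summability of $P^\omega_0(\tau_{2^n}^\ast\leq t)$ over dyadic scales plus Borel--Cantelli upgrades the convergence in probability to the $\mathbb{P}$-a.s.\ statement $\tau_R^\ast\to\infty$, yielding conservativeness of $X^\ast$.
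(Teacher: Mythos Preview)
Your starting point---Dynkin's formula for the stopped coordinates, then controlling $P^\omega_0(\tau_R^\ast\leq t)$ via the quadratic variations---matches the paper. Where you diverge is in bounding $R_1(X^\ast(s))$ by the maximum $\max_{|k|\leq R}H(k)^{1-\alpha_1^-}$ over the full ball, which forgets the actual position $X_2^\ast(s)$, and then attempting to recover that information by an iterative bootstrap on the effective strip width. The paper bypasses the bootstrap entirely by keeping the position inside the bound: the same a.s.\ estimate $\max_{|k|\leq n}H(k)\leq C(\omega)(1+n^{1/\alpha_1^-})$ gives
\[
H(X_2^\ast(s))^{1-\alpha_1^-}\;\leq\;C(\omega)\bigl(1+|X_2^\ast(s)|^{2\varepsilon_1^+}\bigr)
\]
pointwise along the trajectory. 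Taking expectations and applying Jensen (when $\varepsilon_1^+<1$) produces directly a closed pair of integral inequalities for $h_2(t)=E^\omega_0[X_1^\ast(t\wedge\tau_R)^2]$ and $v_2(t)=E^\omega_0[X_2^\ast(t\wedge\tau_R)^2]$,
\[
h_2(t)\leq C(\omega)\Bigl(t+\int_0^t v_2(s)^{\varepsilon_1^+}\,ds\Bigr),\qquad
v_2(t)\leq C(\omega)\Bigl(t+\int_0^t h_2(s)^{\varepsilon_2^+}\,ds\Bigr),
\]
and comparison with the explicit power-law solution of the associated ODE system yields $h_2,v_2\leq C(\omega)(T+T^{A_i})$ uniformly in $R$. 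This is precisely the fixed point your bootstrap is seeking, reached in a single step by working at the level of moments rather than pathwise confinements; the probabilistic error terms you would otherwise have to carry through each iteration never arise.

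There is also a regime you have not addressed: when one of the $\varepsilon_i$ exceeds $1$ (say $\varepsilon_1\in(1,1/\varepsilon_2)$, which is permitted under $\varepsilon_1\varepsilon_2<1$), the exponent $2\varepsilon_1^+>2$ means Jensen goes the wrong way and the second-moment system no longer closes. The paper treats this case separately by passing to a higher moment $v_p$ with $p=2/\varepsilon_2^+>2$, obtaining via H\"older a modified system with nonlinearity $h_2^{\varepsilon_2^+}v_p^{1-\varepsilon_2^+}$ on the right, which does close. Your sketch does not distinguish these regimes and would encounter the same obstruction.
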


The Proposition \ref{prop:X_is_conservative} then follows from the above Lemma. Indeed note (see e.g. exercice 2.32 in \cite{Liggett2010ContinuousTM}) that $X^\ast$ may be constructed using $X$ via the time-change
\begin{equation}
\label{eq:time_change_x_ast_x}
X^\ast_t = X_{A^{-1}_t}, 
\end{equation}
where $A_t := \int_0^t H(X_2(s))^{\alpha_1^{-}} V(X_1(s))^{\alpha_2^{-}} \, ds $ and $A^{-1}$ denotes the right-continuous inverse of  $A$. Now, 
by Hypothesis (H2) in Assumptions \ref{assump:H_V_tail}, for all $x_2 \in \mathbb{Z}$, as $u \rightarrow +\infty$
$$ \mathbb{P}(H(x_2)^{\alpha_1^{-}}>u) = \mathbb{P}(H(x_2) > u^{\frac{1}{\alpha_1^{-}}}) = u^{-\frac{\alpha_1}{\alpha_1^{-}}}(1 + o(1)). $$
Since $\alpha_1 > \alpha_1^{-}$, we deduce that $\mathbb{E}[H(x_2)^{\alpha_1^{-}}] < \infty$. Similarly, for all $x_1 \in \mathbb{Z}$, 
$\mathbb{E}[V(x_1)^{\alpha_2^{-}}] < \infty$. Hence 
\begin{equation}
\label{eq:time_change_admiss}
\mathbb{E}[H(x_2)^{\alpha_1^{-}} \, V(X_1)^{\alpha_2^{-}}] = \mathbb{E}[H(x_2)^{\alpha_1^{-}}] \, \mathbb{E}[V(x_1)^{\alpha_2^{-}}]  < \infty. 
\end{equation}
This ensures that the time-change defined by \eqref{eq:time_change_x_ast_x} is admissible. Hence, if $X^\ast$ is conservative, then $X$ will be as well.
We are thus left with proving Lemma \ref{lem:Xstar_is_conservative}.

\begin{proof}[Proof of Lemma \ref{lem:Xstar_is_conservative}]
We henceforth fix an environment $\omega \in \Omega$. We will actually prove the stronger statement that, for all $t>0$, $X^\ast_1(t)$ and $X^\ast_2(t)$ have finite moments. 
Let $R >0$, and consider 
$$\tau_R := \inf\{t \geq 0: |X^\ast_1(t)| + |X^\ast_2(t)| > R \}.$$ 
which is a stopping time for the canonical filtration $\mathcal{F}_t := \sigma (\{X^\ast(s):  s \leq t \})$. Henceforth we will mostly write $\tau$ instead of $\tau_R$ to alleviate notations.
For all $p \geq 1$, we set 
$$h_p(t) := E^\omega_0 \left[X^\ast_1(t \wedge \tau)^p \right], \qquad v_p(t) := E^\omega_0 \left[X^\ast_2 (t \wedge \tau)^p \right]. $$
\textbf{ First case:} Assume first that $\varepsilon_1 <1$ and $\varepsilon_2<1$. We will show that,
for all $T>0$, 
\[ \forall t \in [0,T], \qquad h_2(t) \leq C(\omega) (T+ T^{A_1}) \quad  \hbox{and} \quad v_2(t) \leq C(\omega) (T+T^{A_2}),\]
where 
\begin{equation}
\label{def:ai}
A_i :=  \frac{1+ \varepsilon_i^+}{1- \varepsilon_1^+ \varepsilon_2^+} > 1, \qquad i=1,2. 
\end{equation} 
and where henceforth $C(\omega)$ denotes a positive constant depending on $\omega$ but independent of $t$ and of the trajectory $X^\ast$ of the random  walk, and whose values may change from line to line. The process 
$$M_t := (X^\ast_1(t \wedge \tau))^2 - \int_0^{t \wedge \tau} H(X^\ast_2(s))^{1-\alpha_1^{-}} \, V(X^\ast_1(s))^{-\alpha_2^{-}} \, ds$$ 
is a martingale bounded - thanks to the stopping time $\tau$ - by 
\[R^2 + t \times \max_{\|x\| \leq R} H(x_1)^{1-\alpha_1^{-}} \, V(x_2)^{-\alpha_2^{-}} < \infty. \]
In addition $M_0=0$. As a consequence 
\[\begin{split}
E^\omega_0[(X^\ast_1(t \wedge \tau))^2]  &=  E^\omega_0 \left[\int_0^{t\wedge \tau}  H(X^\ast_2(s))^{1-\alpha_1^{-}} \, V(X^\ast_1(s))^{-\alpha_2^{-}} \, ds \right] \\
&\leq E^\omega_0 \left[\int_0^{t}  H(X^\ast_2(s \wedge \tau))^{1-\alpha_1^{-}} \, V(X^\ast_1(s \wedge \tau))^{-\alpha_2^{-}} \, ds \right].
\end{split}\]
To bound the integrand in the right-hand side above we use the bound (H1) in Assumption (\ref{assump:H_V_tail}), as well as the bound
\[\forall k \in \mathbb{N}, \quad \max_{|x| \leq k} H(x) \leq C(\omega) \left(1 +  k^{\frac{1}{\alpha_1^{-}}}\right), \]
(see Lemma \ref{lem:estimate_max_heavy_rv} in the Appendix), which yield

\[\begin{split}
E^\omega_0[(X^\ast_1(t \wedge \tau))^2] &\leq C(\omega) \, E^\omega_0 \left[t + \int_0^{t} X^\ast_2(s \wedge \tau)^{\frac{1-\alpha_1^{-}}{\alpha_1^{-}}} \, ds \right] \\
&\leq C(\omega) \left( t +  \int_0^{t} E^\omega_0[X^\ast_2(s \wedge \tau)^{2 \varepsilon_1^+}] \right) \, ds\end{split}\]
with $\varepsilon_1^+$ as in \eqref{def:epsilon+}. Since by assumption $\varepsilon_1 < 1$, recalling the definition \eqref{def:epsilon}, we see that $\varepsilon_1^+ < 1$ provided $\alpha_1^{-}$ is sufficiently close to $\alpha_1$. Using Jensen's inequality, we therefore have 
$$E^\omega_0[X^\ast_2(s \wedge \tau)^{2 \varepsilon_1^+}] \leq E^\omega_0[X^\ast_2(s \wedge \tau)^{2}]^{\varepsilon_1^+} = (v_2(t))^{\varepsilon_1^+}.  $$
Thus, we get
\[h_2(t) = E^\omega_0[(X^\ast_1(t \wedge \tau))^2]  \leq C(\omega) \left( t +   \int_0^{t}  (v_2(s))^{\varepsilon_1^+} \, ds \right). \]
In a similar we obtain the symmetric bound
\[v_2(t) \leq C(\omega) \left( t +   \int_0^{t}  (h_2(s))^{\varepsilon_2^+} \, ds \right). \]
We thereby get the system of inequalities
\begin{equation*}
\begin{cases}
h_2(t) \leq C(\omega) \left(t +   \int_0^{t}  (v_2(s))^{\varepsilon_1^+} \, ds \right) \\
v_2(t) \leq C(\omega) \left(t +   \int_0^{t}  (h_2(s))^{\varepsilon_2^+} \, ds \right). 
\end{cases}
\end{equation*}
Since $\varepsilon_1^+, \varepsilon_2^+ < 1$, we get that $h_2(t), v_2(t) < \infty$ for all $t \geq 0$. More specifically, by the first point in Lemma \ref{lem:bound_sol_diff_ineq} below (see subsection \ref{app:proof_estimates_system} in the Appendix) with $\kappa=T$, we obtain 
\[\begin{split}
\forall t \in [0,T], \qquad  E^\omega_0[X_1^\ast(t\wedge \tau)^2] = h_2(t) &\leq C(\omega) (T^{1/A_1} + T)^{A_1} \\
&\leq C(\omega) \left(T + T^{A_1} \right),
\end{split}\]
(note that $A_1 >1$), and likewise
\[\forall t \in [0,T], \qquad E^\omega_0[X_2^\ast(t \wedge \tau)^2] = v_2(t) \leq C(\omega) \left(T + T^{A_2} \right). \]
Note that these bounds are uniform in the cut-off parameter $R$ defining the stopping time $\tau$, which readily ensures non-explosion in $[0,T]$. Sending $R \to \infty$, we also get by Fatou's lemma the bounds
\begin{equation}
\label{eq:non_explosion_estimates}
\forall t \in [0,T], \qquad  E^\omega_0[X_1^\ast(t)^2] \leq C(\omega) \left(T+ T^{A_1}\right) \quad \hbox{and} \quad  E^\omega_0[X_2^\ast(t)^2] \leq C(\omega) \left(T+ T^{A_2}\right),
\end{equation}
which yield the claim.



\textbf{ Second case:} Assume now that $\varepsilon_2 <1$ and $\varepsilon_1  \in \left(1, \frac{1}{\varepsilon_2} \right)$. Set $p := \frac{2}{\varepsilon_2^+} >2$. For all $R>0$, applying Dynkin's formula to the process $\varphi_R(X_t)$, where $\varphi_R(x) := \mathbf{1}_{\{\|x\| \leq R\}} \|x\|^p$, and noting that, for all $x \in \mathbb{Z}^2$ with $\|x\|<R$ we have  
\[\mathcal{L}^{\omega^\ast} \varphi_R(x) = H(x_2)^{\alpha_1^{-}} \, V(x_1)^{1-\alpha_2^{-}} 
\left(|x_2+1|^p + |x_2-1|^p - 2 |x_2(s)|^p\right), \]
then setting again $\tau = \tau_R$ we obtain that the process
\[ |X^\ast_2(t \wedge \tau)|^p  - \int_0^{t \wedge \tau} H(X^\ast_2(s))^{\alpha_1^{-}} \, V(X^\ast_1(s))^{1-\alpha_2^{-}} 
\left(|X^\ast_2(s)+1|^p + |X^\ast_2(s)-1|^p - 2 |X^\ast_2(s)|^p\right) \, ds \]
is a bounded martingale. 
Taking expectations as above, we obtain
\[\begin{split}  
&E^\omega_0[|X^\ast_2(t \wedge \tau)|^p] \leq \\
&E^\omega_0 \left[ \int_0^{t \wedge \tau} H(X^\ast_2(s))^{\alpha_1^{-}} \, V(X^\ast_1(s))^{1-\alpha_2^{-}} \left(|X^\ast_2(s)+1|^p + |X^\ast_2(s)-1|^p - 2 |X^\ast_2(s)|^p\right) \, ds \right].
\end{split}\]
Now, using the bound (H1) in Assumption (\ref{assump:H_V_tail}), as well as the bound
\[\forall k \in \mathbb{N}, \quad \max_{|x| \leq k} V(x) \leq C(\omega) \left(1 +  k^{\frac{1}{\alpha_2^{-}}}\right), \]
see equation (A.1) in \cite{Andres:2016aa}, we get 
\[\forall s \in [0,T], \quad H(X^\ast_2(s))^{\alpha_1^{-}} \, V(X^\ast_1(s))^{1-\alpha_2^{-}} \leq C(\omega) \, |X^\ast_1(s)|^{2 \varepsilon^{+}_2}.  \]
Further using the inequality
\[ |x+1|^p - 2 |x|^p + |x-1|^p  \leq c \left(|x|^{p-2} + 1 \right), \qquad x \in \mathbb{Z}, \]
with a constant $c$ depending only on $p$, we obtain
\[\begin{split}
E^\omega_0[|X^\ast_2(t)|^p] &\leq C(\omega) \int_0^t E^\omega_0 \left[ |X^\ast_1(s \wedge \tau)|^{2 \varepsilon^{+}_2} \left\{|X^\ast_2(s \wedge \tau)|^{p-2} + 1 \right\} \right] \, ds \\
&\leq C(\omega) \left(t +  \int_0^t E^\omega_0 \left[ |X^\ast_1(s \wedge \tau)|^{2 \varepsilon^{+}_2} |X^\ast_2(s \wedge \tau)|^{p-2} \right] \, ds \right). 
\end{split}\]
We now use H\"{o}lder's inequality with the pair of conjugate exponents $(q,q')$ given by 
\[\frac{1}{q} = \varepsilon^+_2, \qquad \frac{1}{q'} = \frac{p-2}{p} = 1- \varepsilon^+_2 \]
(note that $q,q' >1$ since $\varepsilon_2^+ \in (0,1)$). This allows to bound the integrand in the last integral above by
\[ E^\omega_0[|X^\ast_1(s \wedge \tau)|^{2}]^{\varepsilon^+_2} E^\omega_0[|X^\ast_2(s \wedge \tau)|^{p}]^{1-\varepsilon^+_2} = h_2(s)^{\varepsilon^+_2} v_p(s)^{1-\varepsilon^+_2}. \]
Thus we get
\begin{equation}
\label{eq:bd.vp.sndcase}
v_p(t) \leq C(\omega) \left(t +  \int_0^t h_2(s)^{\varepsilon^+_2} v_p(s)^{1-\varepsilon^+_2} \, ds \right). 
\end{equation} 
On the other hand, reasoning as in the \textbf{First Case} above, we get
\[h_2(t) \leq C(\omega) \left( t +  \int_0^{t} E^\omega_0[X^\ast_2(s \wedge \tau)^{2 \varepsilon_1^+}] \, ds \right),\]
where the last integrand is bounded, by Jensen's inequality, by 
$$E^\omega_0[X^\ast_2(s \wedge \tau)^{p}]^{\frac{2 \varepsilon_1^+}{p}} = v_p(s)^{\varepsilon_1^+ \varepsilon_2^+},$$
so that 
\begin{equation}
\label{eq:bd.h2.sndcase}
h_2(t) \leq C(\omega) \left( t +  \int_0^{t} v_p(s)^{\varepsilon_1^+ \varepsilon_2^+} \, ds \right). 
\end{equation} 
Combining \eqref{eq:bd.vp.sndcase} and \eqref{eq:bd.h2.sndcase} we deduce, by the second point of Lemma \ref{lem:bound_sol_diff_ineq} applied with $\kappa=T$, that for all $t \in [0,T]$,
\[E^\omega_0[X_1^\ast(t\wedge \tau)^2] = h_2(t) \leq C(\omega)\left(T+T^{A_1}\right)  \quad \hbox{and} \quad E^\omega_0[X_2^\ast(t \wedge \tau)^2] = v_2(t) \leq C(\omega) \left(T+ T^{B_2}\right), \]
with $B_2 = \frac{A_2}{\varepsilon_2^+}$. Again, these bounds are uniform in the cut-off parameter $R$ defining the stopping time $\tau$, which ensures non-explosion in $[0,T]$, and
sending $R \to \infty$, we also get by Fatou's lemma
\[\forall t \in [0,T], \qquad  E^\omega_0[X_1^\ast(t)^2] \leq C(\omega) \left(T+ T^{A_1}\right) \quad \hbox{and} \quad  E^\omega_0[X_p^\ast(t)^2] \leq C(\omega) \left(T+T^{B_2}\right), \]
which yield the claim.

\textbf{Last case}: If $\varepsilon_1 <1$ and $\varepsilon_2  \in \left(1, \frac{1}{\varepsilon_1} \right)$, then reasoning similarly as in the previous case, we get non explosion and 
\[\forall t >0, \qquad E^\omega_0[ (X^\ast_1(t))^2 + (X^\ast_2(t))^p]  < \infty \]
where $p := \frac{2}{\varepsilon_1^+}$. \end{proof}

As a by-product of the non-explosion results obtained above, one is able to control the long-time behaviour of the random walk. We assume that 
the non-explosion criterion $\varepsilon_1 \varepsilon_2 < 1$ is satisfied. 

\begin{proposition}
\label{prop:over_scaling}
Define the rescaled random walk $(X^{(T)}(t))_{t \geq 0}$ as follows:
$$X^{(T)}_i(t) = T^{-a_i/2} X_i(Tt), \quad i=1,2$$ 
with $a_i, i =1,2$, two positive constants such that $a_i > \frac{1+ \varepsilon_i}{1- \varepsilon_1 \varepsilon_2}$, $i=1,2$. Then, for $\mathbb{P}$ a.e. $\omega \in \Omega$, for all $t>0$, and $i=1,2$, 
\[ \sup_{0 \leq s \leq t} X_i^{(T)}(s) \underset{T \to \infty}{\longrightarrow} 0 \]
in $P^{\omega}_0$-probability.
\end{proposition}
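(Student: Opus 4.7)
The plan is to reduce the claim to the quenched moment estimates on the time-changed walk $X^\ast$ obtained in the proof of Lemma \ref{lem:Xstar_is_conservative}, and then conclude via Doob's and Markov's inequalities.

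First, choose auxiliary exponents $\alpha_i^{-} < \alpha_i$ close enough to $\alpha_i$ that the constants $A_i$ defined in \eqref{def:ai} satisfy $A_i < a_i$ for $i = 1,2$. This is possible by the assumed strict inequality $a_i > \frac{1 + \varepsilon_i}{1 - \varepsilon_1 \varepsilon_2}$ combined with the continuous dependence of $\varepsilon_i^+$ on $\alpha_i^{-}$ via \eqref{def:epsilon+}. The proof of Lemma \ref{lem:Xstar_is_conservative}, together with the Fatou argument leading to \eqref{eq:non_explosion_estimates}, then furnishes, for $\mathbb{P}$-a.e.\ $\omega$, a constant $C(\omega) < \infty$ such that
\[ E^\omega_0[X^\ast_i(u)^2] \leq C(\omega)\,(u + u^{A_i}), \qquad u \geq 0, \quad i = 1, 2. \]

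Second, control the time change $A_t := \int_0^t H(X_2(s))^{\alpha_1^{-}} V(X_1(s))^{\alpha_2^{-}} \, ds$ connecting $X$ and $X^\ast$ via $X_s = X^\ast_{A_s}$. Under the annealed law $\mathbf{P}$, the environment seen from the walker $\omega(s) = \tau_{X(s)}\omega$ is an ergodic Markov process with stationary law $\mathbb{P}$ (see Section 2.1), and the observable $f(\omega) := H(0)(\omega)^{\alpha_1^{-}} V(0)(\omega)^{\alpha_2^{-}}$ is $\mathbb{P}$-integrable by \eqref{eq:time_change_admiss}. Birkhoff's ergodic theorem applied to $f$ gives $A_s/s \to c := \mathbb{E}[H^{\alpha_1^{-}}]\,\mathbb{E}[V^{\alpha_2^{-}}]$ $\mathbf{P}$-almost surely, and Fubini yields the same convergence $P^\omega_0$-a.s.\ for $\mathbb{P}$-a.e.\ $\omega$. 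In particular, for such $\omega$, $P^\omega_0(A_{Tt} > (c+1) T t) \to 0$ as $T \to \infty$.

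Finally, fix $\omega$ in the full-measure set provided by the two previous steps and observe, using the monotonicity of $s \mapsto A_s$, that
\[ \Bigl\{ \sup_{s \leq t} |X_i(Ts)| > \varepsilon\, T^{a_i/2} \Bigr\} \subseteq \bigl\{ A_{Tt} > (c+1) T t \bigr\} \cup \Bigl\{ \sup_{u \leq (c+1) Tt} |X^\ast_i(u)| > \varepsilon\, T^{a_i/2} \Bigr\}. \]
The first event has vanishing $P^\omega_0$-probability by the second step. For the second, applying Doob's $L^2$-inequality to the local martingale $X^\ast_i$ (after stopping at the exit time of a large ball and sending the radius to infinity by Fatou) and invoking the bound from the first step gives
\[ E^\omega_0\!\left[ \sup_{u \leq (c+1) Tt} X^\ast_i(u)^2 \right] \leq 4\, E^\omega_0\!\left[ X^\ast_i((c+1) Tt)^2 \right] \leq C'(\omega, t)\,(T + T^{A_i}). \]
Markov's inequality then bounds the corresponding probability by $C''(\omega, t, \varepsilon)\,(T^{1 - a_i} + T^{A_i - a_i})$, which tends to $0$ since $a_i > A_i \geq 1$. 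Combining the two bounds yields the claim. The main obstacle is the quenched control of $A_{Tt}$: since the moment estimates of Lemma \ref{lem:Xstar_is_conservative} apply only to $X^\ast$, transferring them to $X$ requires a quenched linear upper bound on the random time change, and the cleanest route to it is the annealed ergodicity of the environment seen from the walker combined with Fubini.
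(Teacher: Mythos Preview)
Your proof is correct and follows essentially the same approach as the paper: both rely on the quenched second-moment estimates \eqref{eq:non_explosion_estimates} for $X^\ast$, Doob's maximal inequality, and Birkhoff's ergodic theorem for the environment seen from the walker to control the time change $A$. The only difference is in the final assembly: the paper writes $X^{(T)}_i = X^{\ast(T)}_i \circ A^{(T)}$, shows $A^{(T)} \to I$ uniformly on compacts via Dini, and then invokes the continuity of composition (Lemma~\ref{lem:cont_comp}), whereas you use the more elementary set inclusion $\{A_{Tt} > (c+1)Tt\} \cup \{\sup_{u \le (c+1)Tt}|X^\ast_i(u)| > \varepsilon T^{a_i/2}\}$ followed by a union bound and Markov's inequality, which avoids the need for the composition lemma altogether.
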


\begin{remark}
This Proposition shows that $ \frac{1+ \varepsilon_i}{2(1- \varepsilon_1 \varepsilon_2)}$ are upper bounds for the correct scaling exponent for $X_i$, $i=1,2$. In the special case where $\alpha_1>1$ or $\alpha_2 >1$, then we actually have a scaling limit as shown in the next section. For instance 
for $\alpha_1 < 1$ and $\alpha_2 > 1$, the correct scaling exponents are $\frac{\delta}{2} = \frac{1}{4}+ \frac{1}{4 \alpha_1}$ and $1/2$, respectively. Note
that in this regime $\varepsilon_1 = \frac{1}{2}(1- \frac{1}{\alpha_1})$ and $\varepsilon_2= 0$, so these scaling exponents are consistent with the above proposition. 
In the other regimes, we also expect the correct scaling exponents to be 
$$ \frac{1+ \varepsilon_i}{2(1- \varepsilon_1 \varepsilon_2)}, \qquad i =1,2,  $$
but our methods fall short of proving tightness with these exponents.
\end{remark}

\begin{proof}
First note that, the proof of Lemma \ref{lem:Xstar_is_conservative} has shown that $(X_i^\ast(t))_{t \geq 0}$, for $i =1,2$,  are $L^2$ martingales with respect to the canonical filtration $\mathcal{F}_t := \sigma (\{X^\ast(s):  s \leq t \})$. Let us now assume for instance that $\varepsilon_1<1$ and $\varepsilon_2 < 1$ (the other cases are similar). The processes $(X_i^\ast(t))_{t \geq 0}$ being c\`{a}dl\`{a}g martingales, by Doob's inequality, and using the bounds \eqref{eq:non_explosion_estimates}, we get, for all $t>0$, 
$$E^\omega_0 \left[ \sup_{0 \leq s \leq t} X_i^\ast(s)^2 \right] \leq 2 E^\omega_0 \left[ X_i^\ast(t)^2 \right] \leq C(\omega) (t+ t^{A_i}), \qquad i \in \{1,2\}, $$
for $A_i$, $i =1,2$, of the form $A_i = \frac{1+ \varepsilon_i^+}{1- \varepsilon_1^+ \varepsilon_2^+}$ where $\varepsilon_1^+ > \varepsilon_1$ and $\varepsilon_2^+ > \varepsilon_2$ (see \eqref{def:ai}). Moreover, choosing $\varepsilon_i^+$ sufficiently close to $\varepsilon_i$, we can further ensure that $a_i > A_i$.  After rescaling, we obtain
$$E^\omega_0 \left[\left( \sup_{0 \leq s \leq t} T^{-a_i/2} |X_i^{\ast}(T s)| \right)^2\right] \leq C(\omega) (T^{1-a_i} t + t^{A_i} T^{A_i-a_i}) \qquad i \in \{1,2\}.$$
Since $a_i >A_i>1$, setting
$$X^{\ast (T)}_i(t) = T^{-a_i/2} X_i^\ast(Tt), \quad i=1,2,$$
then as $T \to \infty$, we obtain that $X^{\ast (T)}_i$ converges to $0$ in probability, in $D([0,\infty))$.  Now, we can write
\[X^{(T)}_i = X^{\ast (T)}_i \circ A^{(T)},\]
where
\[A^{(T)}(t) := \frac{1}{T} A(Tt) = \frac{1}{T} \int_0^{Tt} H(X_2(s))^{\alpha_1^{-}} V(X_1(s))^{\alpha_2^{-}} \, ds, \qquad t \geq 0. \]
By \eqref{eq:time_change_admiss} and the ergodicity of the environment seen from the particle, for $\mathbb{P}$ a.e. $\omega$, for all $t \geq 0$, 
\begin{equation}
\label{eq:cv_time_change_x_star}
A^{(T)}(t) \underset{T \to \infty}{\longrightarrow} c t,
\end{equation}
$P^\omega_0$ a.s., where $c :=  \mathbb{E}[H(0)^{\alpha_1^{-}}] \, \mathbb{E}[V(0)^{\alpha_2^{-}}] < \infty$. Indeed, recalling the notation of \eqref{eq;env_seen_from_partic}, we can write 
\[A^{(T)}(t) = \frac{1}{T} \int_0^{Tt} [\omega(s)](0,e_1)^{\alpha_1^{-}} \, [\omega(s)](0,e_2) ^{\alpha_2^{-}}  \, ds, \]
and, since 
\[\mathbb{E}[\omega(0,e_1)^{\alpha_1^{-}} \, \omega(0,e_2)^{\alpha_2^{-}}] =  \mathbb{E}[H(0)^{\alpha_1^{-}}] \, \mathbb{E}[V(0)^{\alpha_2^{-}}] =c,  \] 
the claimed convergence holds by Birkhoff's ergodic theorem applied to the process $(\omega(t))_{t \geq 0}$. Since the processes $A^{(T)}$ are non-decreasing, by Dini's theorem we deduce that $A^{(T)} \underset{T \to \infty}{\longrightarrow} I$ a.s. in $C([0,\infty))$, where $I(t) := c t$, $ t \geq 0$. By Lemma \ref{lem:cont_comp}, we deduce that $X^{(T)}_i = X^{\ast (T)}_i \circ A^{(T)}$ converges in probability to $0$ as $T \to \infty$, for $i=1,2$.
\end{proof}

\section{Scaling limit results when $\max(\alpha_1,\alpha_2)>1$}
\label{sec:scaling}

In this section we assume that $\max(\alpha_1,\alpha_2)>1$. We will prove scaling limit results for the properly rescaled random walk. We start with the easy case where $\alpha_1, \alpha_2 >1$ and then tackle the interesting case where $\min(\alpha_1, \alpha_2) < 1$.

\subsection{Case $\alpha_1 > 1$, $\alpha_2 >1$.}

In this case we have $\mathbb{E}[H(x)] := c_H < \infty$ and $\mathbb{E}[V(x)] := c_V < \infty$. Invariance principle hence follows by standard results, see \cite{BISKUP}. In our specific framework, it can be derived along the following lines. Reasoning as for the proof of the convergence \eqref{eq:cv_time_change_x_star} above, by ergodicity of the environment seen from the walker, for all $t \geq 0$ we get, $\mathbf{P}$ almost-surely, 
\[\frac{1}{T} \int_0^{Tt} H(X_2(s)) \, ds  \underset{T \to \infty}{\longrightarrow} c_H \, t, \qquad \frac{1}{T} \int_0^{Tt} V(X_1(s)) \underset{T \to \infty}{\longrightarrow} c_V \, t. \]  
Now, setting $X^{(T)}(t) := T^{-1/2} (X_1(Tt),X_2(Tt))$, $t \geq 0$, we therefore have, for all $t>0$, the $\mathbf{P}$ almost-sure convergence 
\[\langle X^{(T)}_1, X^{(T)}_1 \rangle (t) \underset{T \to \infty}{\longrightarrow} 2 c_H t,  \qquad  \langle X^{(T)}_2, X^{(T)}_2 \rangle (t) \underset{T \to \infty}{\longrightarrow} 2 c_V t.\]
Since moreover $\langle X^{(T)}_1, X^{(T)}_2 \rangle (t) = 0$ for all $T>0$, we deduce by standard arguments that, for $\mathbb{P}$ a.e. $\omega$, $(X^{(T)}(t))_{t \geq 0}$ converges under $P^\omega_0$, in  distribution, for the Skorohod topology, to a $2$-dimensional Brownian motion $(\beta(t))_{t \geq 0}$ with diffusion matrix
$$\Sigma = \begin{pmatrix}
2 c_H & 0 \\
0  & 2 c_V
\end{pmatrix}. $$

\subsection{Case where either $\alpha_1<1$ or $\alpha_2 < 1$.}

Now we work in the semi-degenerate case where either $\alpha_1<1$ or $\alpha_2 < 1$. Let us assume for instance $\alpha_1<1$ and $\alpha_2 >1$.

\subsubsection{The result}

In this case we also have $\mathbb{E}[V(x)] := c_V < \infty$, whence, reasoning as before, 
$$\langle X^{(T)}_2, X^{(T)}_2 \rangle (t) \underset{T \to \infty}{\longrightarrow} 2c_V t, \qquad a.s. $$
Hence, $(X_2^{(T)}(t))_{t \geq 0}$ converges $\mathbb{P}$-a.s., in distribution, to a $1$-dimensional Brownian motion $(\beta_1(t))_{t \geq 0}$ with diffusion coefficient $\sqrt{2}$. However now $\mathbb{E}[H(x)]=\infty$, thus a similar convergence fails to hold for $(X_1^{(T)}(t))_{t \geq 0}$. We actually have a superdiffusive behavior of the fast component $X_1$, with a non-Gaussian scaling limit. In order to obtain a quenched formulation, we will make the following assumption. 

\begin{assumption}
\label{assump:H_V_tail_bis}
  We assume:
  \begin{enumerate}
  \item The variables $V(x)$, $x \in \mathbb{Z}$ are i.i.d, with finite mean, and there exists $c>0$ such that $V(x) \geq c$ for all $x \in \mathbb{Z}$.
  \item There is an underlying double-sided stable subordinator $(\mathcal{H}(x))_{x \in \mathbb{R}}$ of index $\alpha_1$, independent of $(V(x))_{x \in \mathbb{Z}}$, such that the random variables $H(x)$ are given by
 \begin{equation}
 \label{eq:relation_env_stable_proc}
 \forall x \in \mathbb{Z}, \quad H(x) = \mathcal{H}(x+1) - \mathcal{H}(x).
 \end{equation}
  \end{enumerate}
\end{assumption}

For all $T>0$, we define 
\begin{equation}
\label{eq:def_env_rescaled}
H^{(T)}(x) := T^{\frac{1}{2\alpha_1}} \left( \mathcal{H} \left( \frac{x+1}{\sqrt{T}} \right) - \mathcal{H} \left(\frac{x}{\sqrt{T}} \right) \right), \qquad x \in \mathbb{Z}.  
\end{equation}
Note that $(H^{(T)}(x))_{x \in \mathbb{Z}}$ has the same distribution as the original collection $(H(x))_{x \in \mathbb{Z}}$, thanks to the scaling property of $\mathcal{H}$. In the sequel it will be convenient to use the $H^{(T)}(x)$ rather than the $H(x)$ in the definition of the rescaled process $X^{(T)}$ so as to get a quenched result. Namely, for all $T>0$, we  define $X^{T}$ the random walk started from $0$ in the environment defined by the variables $H^{(T)}(x)$ and $V(x)$, and then set $X^{(T)}(t) := (X_1^{(T)}(t), X_2^{(T)}(t))$, where 
\begin{equation}
\label{eq:def_rescaled_process_rescaled_environment}
\forall t \geq 0, \qquad X_1^{(T)}(t) := \frac{1}{T^\delta} X^{T}_1(Tt), \quad  X_2^{(T)}(t) := \frac{1}{T} X^{T}_2(Tt).
\end{equation} 
Here $\delta = \frac{1}{2}\left(1 + \frac{1}{\alpha_1}\right)$. 

%

\begin{remark}
Redefining the variables $H(x)$ as \eqref{eq:def_env_rescaled} for each value of $T$ allows to obtain quenched result. However this does not change the law of the environment, so without this modification all the scaling limit results stated below would remain true, but in annealed - rather than quenched - sense.
\end{remark}

\begin{theorem}
\label{thm:conv}
For $\mathbb{P}$-a.e. realisation of the environment, there holds the convergence in law, in the space $C([0,\infty))$,
\begin{equation}
\label{eq:conv.clock_proc_delta} 
\left( \frac{1}{T^\delta} \int_0^{Tt} H^{(T)} (X^T_2(s)) \, ds \right)_{ t \geq 0} \underset{T \to \infty}{\Longrightarrow} \left(\Delta(t)\right)_{ t \geq 0},
\end{equation}
where $(\Delta(t))_{t \geq 0}$ is the Kesten-Spitzer process, defined as 
\begin{equation}
\label{eq:def_ks_process_thm}
\Delta(t) = \Delta^\mathcal{H}(B_2)(t) = \int_{-\infty}^{+\infty} L^{B_2}(t,x) \, d\mathcal{H}(x),  \quad t \geq 0,
\end{equation}
where $(L^{B_2}(t,x))_{x \in \mathbb{R}, \, t \geq 0}$ is the local time process of $B_2$, where $B_2$ is a Brownian motion in $\mathbb{R}$ started from $0$ and with diffusion coefficient $\sqrt{2}$.
In addition, for $\mathbb{P}$-a.e. realisation of the environment, we have the convergence in law, in $D([0,\infty))$, 
\begin{equation}
\label{eq:conv_x_statement} 
\left( X^{(T)}(t) \right)_{t \geq 0} \underset{T \to \infty}{\Longrightarrow} \left(B_1(\Delta(t)), B_2(t)) \right)_{t \geq 0},  
\end{equation}
where $(\Delta(t))_{t \geq 0}$ is as in \eqref{eq:def_ks_process_thm}, and $B_1$ is a Brownian motion started from $0$, with diffusion coefficient $\sqrt{2}$, and independent from $B_2$.
\end{theorem}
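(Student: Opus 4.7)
The plan is a three-stage argument: convergence of the vertical component $X_2^{(T)}$ to $B_2$, convergence of the clock process \eqref{eq:conv.clock_proc_delta}, and combination via a time-changed Brownian motion representation to get \eqref{eq:conv_x_statement}.

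For the first stage, $(X_2^T(t))_{t \geq 0}$ is a c\`{a}dl\`{a}g martingale with $\pm 1$ jumps and predictable quadratic variation $\langle X_2^T\rangle_t = 2 \int_0^t V(X_1^T(s))\,ds$. Since $\mathbb E[V(0)] = c_V < \infty$ and the environment seen from the walker is ergodic (Section~2.1), Birkhoff's ergodic theorem yields the quenched convergence $T^{-1}\langle X_2^T\rangle_{Tt} \to 2 c_V \, t$, $\mathbb P$-a.s., uniform on compact $t$-intervals by Dini's theorem. Combined with the uniform jump bound, the martingale functional CLT then delivers $T^{-1/2} X_2^T(T\cdot) \Rightarrow B_2$ in $D([0,\infty))$ in the quenched sense.

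The second stage, establishing \eqref{eq:conv.clock_proc_delta}, is the core of the proof. The principal obstacle is that $X_2^T$ is \emph{not} independent of the heavy-tailed variables $H^{(T)}(k)$: horizontal sojourns between vertical jumps are driven by the $H^{(T)}$ rates, and the positions of $X_1^T$ in turn govern the vertical jump rates $V(X_1^T)$. Following the strategy announced in the introduction, I embed $X_2^T$ as a time-change of a SRW independent of the environment. Letting $0 = T_0 < T_1 < T_2 < \cdots$ be the vertical jump times and $Y_n := X_2^T(T_n)$, a construction of $X^T$ from independent Poisson clocks and fair $\pm 1$ coin flips shows that $(Y_n)$ is a symmetric nearest-neighbour SRW on $\mathbb Z$, independent of the full environment $(H^{(T)}, V)$. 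Decomposing the clock by occupation times,
\[ \int_0^{Tt} H^{(T)}(X_2^T(s))\,ds = \sum_{k \in \mathbb Z} H^{(T)}(k)\, \ell_k(Tt),\]
Birkhoff's theorem for the environment seen from the walker, crucially using $\mathbb E[V] < \infty$, yields $N_V(Tt)/T \to 2 c_V\, t$ (where $N_V(Tt)$ is the vertical jump count), and an ergodic argument identifies each contribution $\ell_k(Tt)$ asymptotically with $(2c_V)^{-1}$ times the SRW visit count of $Y$ at $k$ up to step $\lfloor 2 c_V T t \rfloor$. The sum then becomes a Kesten-Spitzer-type functional of the SRW $Y$ paired with the scenery $H^{(T)}$, to which \cite{kesten1979limit} can be applied since $Y$ is independent of $\mathcal H$: invoking the diffusive rescaling of $Y$'s occupation measure against the stable self-similarity $H^{(T)}(k) = T^{1/(2\alpha_1)}(\mathcal H((k+1)/\sqrt T) - \mathcal H(k/\sqrt T))$ produces the quenched convergence
\[ \frac{1}{T^\delta} \int_0^{Tt} H^{(T)}(X_2^T(s))\,ds \;\longrightarrow\; \int_{\mathbb R} L^{B_2}(t,x)\,d\mathcal H(x) = \Delta(t) \]
in $C([0,\infty))$, with $\delta = \tfrac{1}{2}(1 + 1/\alpha_1)$.

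For the third stage, the process $X_1^T$ is a purely discontinuous martingale with $\pm 1$ jumps, QV $\langle X_1^T\rangle_t = 2 \int_0^t H^{(T)}(X_2^T(s))\,ds$, and $\langle X_1^T, X_2^T\rangle \equiv 0$. By a martingale functional CLT for c\`{a}dl\`{a}g martingales (or a Dambis-Dubins-Schwarz embedding extended to the c\`{a}dl\`{a}g setting), Steps~1 and~2 together with the continuity of $t \mapsto \Delta(t)$ yield the joint convergence $(T^{-\delta/2} X_1^T(T\cdot), T^{-1/2} X_2^T(T\cdot)) \Rightarrow (B_1(\Delta(\cdot)), B_2(\cdot))$ in the Skorohod topology, where $B_1$ is a BM with diffusion coefficient $\sqrt 2$ independent of $B_2$ and $\mathcal H$; the continuous-mapping step for composing the limiting BM with the continuous time-change $\Delta$ uses Lemma~\ref{lem:cont_comp}. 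The main technical obstacle is Step~2: disentangling the walker-scenery dependence via the time change to the environment-independent SRW $Y$, and then unwinding this time change quantitatively via $\mathbb E[V] < \infty$, is the heart of the argument and the reason the assumption $\alpha_2 > 1$ is essential.
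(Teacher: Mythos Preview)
Your overall strategy matches the paper's (time-change the vertical component into an environment-independent SRW, apply a Kesten--Spitzer argument, conclude via martingale convergence, then undo the time change), but two technical steps are glossed over in a way that hides the real work. In Stage~2, the assertion that ``an ergodic argument identifies each $\ell_k(Tt)$ with $(2c_V)^{-1}$ times the SRW visit count'' conceals the main difficulty: the holding time at level $k$ between consecutive vertical jumps depends on the trajectory of $X_1^T$, which during that interval jumps at rate $2H^{(T)}(k)$, so the per-visit holding time at $k$ is correlated with the very weight $H^{(T)}(k)$ you are summing against. A site-by-site LLN therefore does not control $\sum_k H^{(T)}(k)\bigl[\ell_k(Tt)-(2c_V)^{-1}L_k^Y\bigr]$. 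The paper never attempts such an estimate. It uses the \emph{continuous} time change $Y=X\circ A^{-1}$ with $A(t)=\int_0^t V(X_1(s))\,ds$, so that $Y_2$ is a rate-$1$ SRW with convergent rescaled local times; the clock then reads $\int_0^{\cdot} H^{(T)}(Y_2)/V(Y_1)\,ds$, and the residual factor $1/V(Y_1)$ is removed by a \emph{second} time change making $Y_1$ itself a SRW independent of $(V(x))_x$ (Lemma~\ref{lem:cv_ratio_dv_d}), yielding the global ratio convergence $D^{V,T}/D^T\to 1$ rather than any site-wise comparison. This second time change is the key step your outline lacks.

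In Stage~3, two further ingredients are missing. First, passing the martingale property of $(Y_1^{(T)})^2-\langle Y_1^{(T)}\rangle$ to the limit requires uniform integrability in $T$, which the paper secures by a direct second-moment bound on the clock (Lemma~\ref{lem:ui}). Second, and more substantially, identifying the limit as $(B_1\circ\Delta,B_2)$ with $B_1$ \emph{independent} of $B_2$ is not automatic from a martingale FCLT or from the orthogonality $\langle\mathcal Y_1,\mathcal Y_2\rangle\equiv 0$, because the limiting clock $\Delta$ is a functional of $\mathcal Y_2$. The paper establishes this independence via a Girsanov argument (Proposition~\ref{thm:conv_y}); Knight's theorem would also do the job here since $\langle\mathcal Y_2\rangle_t=2t$ makes one of the two time changes trivial, but some explicit argument is required and your outline does not provide one.
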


\begin{remark}
We believe the second point of Assumption \ref{assump:H_V_tail_bis} may be relaxed to just assuming $\mathbb{P}(H(x_2)>L) \underset{L \to \infty}{=} c_H L^{-\alpha_1(1 + o(1))}$, at the expanse of re-defining the variables $H^{(T)}(x)$ accordingly: in this more general case, as in \cite{benarous2005} (see Section 4 therein), we can define $G:[0,\infty) \to [0,\infty)$ such that, for all $a \geq 0$
\[ \mathbb{P}(\mathcal{H}(1) > G(a)) = \mathbb{P}(H(0)>a). \]
Such a function $G$ is well-defined since $Z(1)$ has a continuous distribution. Additionally, it is nondecreasing and right continuous, and we denote by $G^{-1}$ its right-continuous generalized inverse. For all $T>0$ we could then define 
\[H^{(T)}(x) := G^{-1} \left( T^{\frac{1}{2\alpha_1}} \left( \mathcal{H} \left( \frac{x+1}{\sqrt{T}} \right)- \mathcal{H} \left(\frac{x}{\sqrt{T}} \right) \right) \right) , \qquad x \in \mathbb{Z}.  \]
With this properly rescaled environment, the above scaling limit theorem is believed to remain true. However, in order to simplify the proofs, we shall stick here to the, already non trivial, case of $H(x)$ given as in \eqref{eq:relation_env_stable_proc}.  
\end{remark}

In the remainder of this section we prove the above theorem, first in the easier case where the random variables $V(x)$ are deterministic, second in the general case.

\subsubsection{Case where $V$ is constant}

Let us assume in a first step that $V$ is a deterministic constant, e.g. $V(x)=1$ for all $x \in \mathbb{Z}$. Thus the environment is given by

\begin{align*}
  \omega(x,x +e_1) &= \omega(x,x -e_1) = H(x_2)(\omega) > 0, \\
  \omega(x,x +e_2) &= \omega(x,x -e_2) = 1,
\end{align*}
In this case, $X(t)=(X_1(t),X_2(t))$ can be represented as $(S_1(A(t)), S_2(t))$, where $S(t) = (S_1(t),S_2(t))$ is a 2-dimensional random walk jumping to each neighboring site at rate 1, and 
\[A(t) = \int_0^t H(S^2(r)) \, dr, \quad t \geq 0.\]
Note that $S^2(t)$, $t \geq 0$, is independent of the random variables $H(x)$, $x \in \mathbb{Z}$, so $(A(t))_{t \geq 0}$ is a continuous-time random walk in random scenery whose scaling limit is given by a 
Kesten-Spitzer process like \eqref{eq:def_ks_process_thm}, as proven in \cite{kesten1979limit}. Hence the following lemma is a consequence of the results of \cite{kesten1979limit} (with the slight difference that in that article
the random walk is in discrete time), however we shall provide a different proof which more easily generalises to the case where the random variables $V(x)$, $x \in \mathbb{Z}$, are no longer constant.

\begin{lemma}
\label{lem:conv_constant_case}
For $\mathbb{P}$ a.e. realisation of the environment, we have the convergence in law in $C([0,\infty))$
\[\left(\frac{1}{T^\delta} \int_0^{Tt} H^{(T)}(S_2(r)) \, dr \right)_{t \geq 0} \underset{T \to \infty}{\Longrightarrow} (\Delta(t))_{t \geq 0}, \]
where  $(\Delta(t))_{t \geq 0}$ is as in \eqref{eq:def_ks_process_thm}.
\end{lemma}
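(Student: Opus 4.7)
My plan is to use the occupation time formula to rewrite the clock process as an integral of a rescaled local time against the fixed measure $d\mathcal{H}$, and then to invoke the joint invariance principle for the continuous-time simple random walk and its local time.

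First, denoting by $\ell(s,y) := \int_0^s \mathbf{1}_{\{S_2(r) = y\}}\,dr$ the occupation time of $S_2$ at $y$, the occupation time formula gives $\int_0^{Tt} H^{(T)}(S_2(r))\,dr = \sum_{y \in \mathbb{Z}} H^{(T)}(y)\,\ell(Tt, y)$. Plugging in $H^{(T)}(y) = T^{1/(2\alpha_1)} [\mathcal{H}((y+1)/\sqrt{T}) - \mathcal{H}(y/\sqrt{T})]$ and introducing the rescaled local time $L^T(t,x) := \ell(Tt, \lfloor \sqrt{T} x \rfloor)/\sqrt{T}$, and noting that $T^\delta = T^{1/2} \cdot T^{1/(2\alpha_1)}$, one obtains the key identity
\[\frac{1}{T^\delta} \int_0^{Tt} H^{(T)}(S_2(r))\,dr = \int_\mathbb{R} L^T(t,x)\,d\mathcal{H}(x). \]
The rest of the proof then consists in passing to the limit under the integral sign against the fixed (quenched) measure $d\mathcal{H}$.

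To this end, I would invoke the classical joint invariance principle for continuous-time simple random walks together with their local times: setting $S_2^{(T)}(s) := S_2(Ts)/\sqrt{T}$, one has the joint convergence $(S_2^{(T)}, L^T) \Rightarrow (B_2, L^{B_2})$ in $D([0,\infty)) \times C([0,\infty) \times \mathbb{R})$, where $B_2$ is a Brownian motion with diffusion coefficient $\sqrt{2}$ (reflecting the total jump rate $2$ of $S_2$) and $L^{B_2}$ is its jointly continuous local time, with convergence of local times being locally uniform in $(t,x)$. Since $S_2$ is independent of $\mathcal{H}$, this remains valid quenched in $\mathcal{H}$: fix a realisation of $\mathcal{H}$ and use Skorokhod representation to assume a.s. convergence. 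The support of $L^T(t, \cdot)$ lies in $\{|x| \leq \sup_{s \leq t} |S_2^{(T)}(s)| + 1\}$, which is tight by Donsker's theorem, while $L^{B_2}(t, \cdot)$ is compactly supported on any finite time window. Combined with the local finiteness of $d\mathcal{H}$ and the continuity of $L^{B_2}(t, \cdot)$, the locally uniform convergence $L^T \to L^{B_2}$ yields $\int_\mathbb{R} L^T(t, x) d\mathcal{H}(x) \to \Delta(t)$ for each fixed $t$, hence convergence of all finite-dimensional distributions. Since both prelimit and limit processes are non-decreasing in $t$ with an almost surely continuous limit, this upgrades to convergence in $C([0, \infty))$ by a Dini-type argument analogous to the one used in the proof of Proposition~\ref{prop:over_scaling}.

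The main obstacle is the last step, quenched in $\mathcal{H}$: because $\alpha_1 < 1$, the subordinator $\mathcal{H}$ has a dense set of atoms on any interval, so one must carefully combine the (random but tight) compact support of $L^T$ with genuine locally uniform — rather than merely pointwise — convergence to $L^{B_2}$. This is precisely what the strong form of the joint invariance principle for random walk local times provides, and is what makes the quenched (rather than merely annealed) statement work.
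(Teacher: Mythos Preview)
Your proof is correct and follows essentially the same strategy as the paper: rewrite the clock process as an integral of rescaled occupation times against (a version of) $d\mathcal{H}$, invoke the joint invariance principle for the simple random walk together with its local times, and control the spatial tails via the tightness of $\sup_{s\le t}|S_2^{(T)}(s)|$. The paper's version differs only cosmetically: it writes the integral as $\int \tilde{\ell}^T_t(x)\,\mu^T(dx)$ against the \emph{discrete} measures $\mu^T=\sum_{x\in T^{-1/2}\mathbb{Z}}(\mathcal{H}(x+T^{-1/2})-\mathcal{H}(x))\delta_x$, proves $\mu^T\to d\mathcal{H}$ vaguely, and then uses continuity of the pairing $(f,\mu)\mapsto\int f\,d\mu$ on compactly supported $f$, with an explicit cutoff $\varphi_M$. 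Your rewriting $\int L^T(t,x)\,d\mathcal{H}(x)$ against the \emph{fixed} measure is slightly cleaner, since it removes the need to track convergence of measures. Two small technical points worth noting: (i) your $L^T$ is piecewise constant in $x$, so the locally uniform convergence $L^T\to L^{B_2}$ should be read as a consequence of the convergence of the linearly interpolated version (the paper's $\tilde{\ell}^T$) plus vanishing step size; (ii) once you have locally uniform convergence of $L^T$ in both variables and an a.s.\ common compact spatial support on any finite time window, you actually get uniform-in-$t$ convergence of the integrals directly, so the Dini step is not strictly needed (though it is also valid).
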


\begin{proof}
Since $(S(t))_{t \geq 0}$ is a simple random walk with quadratic variations 
\[ \langle S_1,S_1 \rangle_t = \langle S_2,S_2 \rangle_t = 2 t,\]
by Donsker's theorem we have the convergence in law, in $D([0,\infty))$,
\begin{equation}
\label{eq:donsker_s} 
S^{(T)}(t) := \frac{1}{\sqrt{T}} S(Tt) \underset{T \to \infty}{\Longrightarrow} B(t) 
\end{equation}
where $(B(t))_{t \geq 0}$ is a two-dimensional Brownian motion with diffusion coefficient $\sqrt{2}$. In addition, we have convergence at the level of the local times thanks to Lemma \ref{lem:conv_lt_srw} (see subsection \ref{subsec:appendix_local_times} of the Appendix).
Namely, setting 
\[\ell^T_t(x) := \frac{1}{\sqrt{T}} \int_0^{Tt} \mathbf{1}_{\{S^2(r) = \lfloor T x \rfloor\}} \, dr, \qquad t \geq 0, \quad x \in \mathbb{R}, \]
as well as $\tilde{\ell}^T$ the version of these local times obtained by linear interpolation in space
 \[\tilde{\ell}^{\xi, T}_t(x) = \left(\lceil \sqrt{T} x \rceil - \sqrt{T} x\right)\ell^{T}_t(x) + \left(\sqrt{T} x - \lfloor \sqrt{T} x \rfloor \right) \ell^{T}_t(x+1),  \qquad t \geq 0,  \quad x \in \mathbb{R},\]
 then, jointly with the convergence \eqref{eq:donsker_s}, we have the convergence in law in $C([0,\infty) \times \mathbb{R}, \mathbb{R})$, for the topology of uniform convergence on compact sets, 
\[ (\ell^T_t(x))_{x \in \mathbb{R}, t \geq 0} \overset{(d)}{\underset{T \to \infty}{\Longrightarrow}} (L^{B_2}(t,x))_{x \in \mathbb{R}, t \geq 0} \]
where $L^{B_2}(t,x)$ are the local times of $B_2$, see Lemma \ref{lem:conv_lt_srw} below. On the other hand, setting 
\[\mu^T := \frac{1}{T^{1/2\alpha_1}} \sum_{x \in T^{-1/2} \mathbb{Z}} H^{(T)}(\sqrt{T}x) \, \delta_{x}, \qquad T>0, \]
then, recalling \eqref{eq:def_env_rescaled}, for all $T>0$ we can re-express $\mu_T$ as 
$$\mu_T = \sum_{x \in T^{-1/2} \mathbb{Z}} \left(\mathcal{H} \left(x+ \frac{1}{\sqrt{T}} \right) - \mathcal{H}(x)\right) \, \delta_{x}.$$ 
Thus, almost-surely, the sequence of measures $(\mu_T)_{T >0}$ converges vaguely, as $T\to \infty$, 
 to the Stieltjes measure $\mathcal{H}(dx)$ of the stable subordinator $\mathcal{H}$ defining the environment. Given a large number $M>0$, let us now consider the map
\[\Phi : \begin{cases}
        C_{M}(\mathbb{R}_+\times \mathbb{R}) \times \mathcal{M}(\mathbb{R}) &\to C(\mathbb{R}_+) \\
        (f,\mu) &\mapsto (t \mapsto \int_{\mathbb{R}} f(t,x) \, \mu(dx))
    \end{cases} \]
where 
$$C_M(\mathbb{R}_+\times \mathbb{R}) := \{f \in C(\mathbb{R}_+\times \mathbb{R}): \forall t \geq 0, \hbox{supp}(f(t,\cdot)) \subset \subset [-M-1,M+1] \}$$
is the space of continuous functions on $\mathbb{R}_+ \times \mathbb{R}$ that are compactly supported in space in the interval $[-M-1,M+1]$. We equip $C_M(\mathbb{R}_+\times \mathbb{R})$ and $C(\mathbb{R}_+)$ with the topology of uniform convergence on compact subsets (of $\mathbb{R}_+ \times \mathbb{R}$ and $\mathbb{R}_+$, respectively). Then the map $\Phi$ is continuous. 
Therefore, taking a cut-off function $\varphi_M \in C(\mathbb{R})$ such that $\varphi_M(x)=0$ if $|x|>M+1$ and $\varphi_M(x) = 1$ if $|x| \leq M$, we have the quenched convergence in distribution
\[\begin{split}
\int_\mathbb{R} \varphi_M(x) \tilde{\ell}^T_t(x) \, \mu_T(dx) \underset{T \to \infty}{\Longrightarrow} 
\int_{\mathbb{R}} \varphi_M(x) L^{B_2}_t(x) \, \mathcal{H} (dx). 
\end{split}\]
Further note that $\tilde{\ell}^T_t(x) =  \ell^T_t(x)$ for all $x \in T^{-1/2} \mathbb{Z}$, hence for all $x$ in the support of $\mu_T$, so we also have
\[\begin{split}
\int_\mathbb{R} \varphi_M(x) \ell^T_t(x) \, \mu_T(dx) \underset{T \to \infty}{\Longrightarrow} 
\int_{\mathbb{R}} \varphi_M(x) L^{B_2}_t(x) \, \mathcal{H} (dx). 
\end{split}\]
Now
\begin{equation}
\label{eq:dec_integ_clock}
\begin{split}
&\frac{1}{T^\delta} \int_0^{Tt} H^{(T)}(S_2(r)) \, dr = \int_\mathbb{R} \ell^T_t(x) \, \mu_T(dx) \\ 
= &\int_\mathbb{R} \varphi_M(x) \ell^T_t(x) \, \mu_T(dx)  + \int_\mathbb{R} (1-\varphi_M(x)) \ell^T_t(x) \, \mu_T(dx),
\end{split}
\end{equation}
while 
\begin{equation}
\label{eq:dec_integ_limit}
\begin{split}
&\int_{\mathbb{R}} L^{B_2}(t,x) \, \mathcal{H}(dx) \\
&= \int_{\mathbb{R}} \varphi_M(x) L^{B_2}(t,x) \, \mathcal{H}(dx)  + \int_{\mathbb{R}} (1- \varphi_M(x)) L^{B_2}(t,x) \, \mathcal{H}(dx). 
\end{split}
\end{equation}
We know that the first term in the right-hand side of \eqref{eq:dec_integ_clock} converges in distribution, as $T \to \infty$, to the first term in the right-hand side of \eqref{eq:dec_integ_limit}, for the topology of uniform convergence on compact sets on $C(\mathbb{R})$. In addition, defining $\varepsilon(M)$ in a similar way as $\varepsilon(A)$ is defined in equation (2.17) of \cite{kesten1979limit}:
\[\varepsilon(M) := \sup_{T \geq 1} \, \mathbb{P} \left( \underset{0 \leq r \leq t}{\sup} |S^{(T)} (r)| > M \right), \]
and reasoning as in the aformentioned article, we see that $\varepsilon(M) \underset{M \to \infty}{\longrightarrow} 0$.  We deduce that for any fixed $t \geq 0$, and any $\epsilon>0$ we have, for $M$ large enough,
\[\forall T \geq 1,  \quad \mathbb{P} \left( \underset{0 \leq r \leq t}{\sup} |S^{(T)} (r)| > M \right) \leq \epsilon/2 \qquad \hbox{and} \qquad \mathbb{P} \left( \underset{0 \leq r \leq t}{\sup} |B(r)| > M \right) \leq \epsilon/2.\]
Since $1-\varphi_M$ vanishes on $[-M,M]$, the second terms in the right-hand sides of \eqref{eq:dec_integ_clock} and \eqref{eq:dec_integ_limit} therefore vanish simultaneously with probability at least $1-\epsilon$. Since $\epsilon$ was arbitrary, this shows that these terms tend to $0$ in probability as $M \to \infty$, uniformly in $T$. As a consequence, we finally obtain the convergence in law
$(A^{(T)}(t))_{t \geq 0} \underset{T \to 0}{\Longrightarrow} (\Delta(t))_{t \geq 0} $ for the topology of uniform convergence of compact sets on $C(\mathbb{R}_+)$, where
\[A^{(T)}(t) :=  \frac{1}{T^\delta} A(Tt), \quad \Delta(t) = \int_{\mathbb{R}} L^{B_2}(t,x) \, \mathcal{H}(dx), \qquad t \geq 0.\]
\end{proof}

From the above lemma we deduce the requested convergence. Indeed, setting as in \eqref{eq:def_rescaled_process_rescaled_environment},
$$X^{(T)}_1(t) :=  T^{-\delta/2} X^T_1(Tt), \quad X^{(T)}_2(t) : = \frac{1}{\sqrt{T}} X_2(Tt), \qquad t \geq 0,$$
(note that in our case $X_2^T = X_2= S_2$ does not depend on the rescaled environment), we have
\[X^{(T)}_1(t) =  S_1^{(T^\delta)} \circ A^{(T)} (t), \quad X^{(T)}_2(t) =  S_2^{(T)} (t),\]
where, for all $M>0$, $S^{(M)}_i(t) := \frac{1}{\sqrt{M}} S_i(Mt)$, $i=1,2$. Now, for $\mathbb{P}$-a.e. realisation of the environment we have the joint convergence in law 
$$(A^{(T)}, S_2^{(T)}) \underset{T \to \infty}{\Longrightarrow} (\Delta,B_2)$$ 
in $C\times D$ , as well as the convergence in law $S_1^{(T^\delta)} \underset{T \to \infty}{\Longrightarrow} B_1$ in $D$. Since, moreover, $S_1^{(T^\delta)}$ is independent of $(A^{(T)}, S_2^{(T)})$, these two convergences happen jointly. Using Lemma \ref{lem:composition} we deduce the convergence in law 
\[(X^{(T)}_1,X^{(T)}_2)  \underset{T \to \infty}{\Longrightarrow} (B_1\circ \Delta, B_2), \]
in $D\times D$, as requested.

\subsubsection{General case}

In the case where $V(x)$, $x \in \mathbb{Z}$, are no longer deterministic, the above proof no longer works. One difficulty is to obtain the convergence in law of the local times of $(X_2(t))_{t \geq 0}$ (when diffusively rescaled), to those of a Brownian motion: this is non-trivial as $(X_2(t))_{t \geq 0}$ is no longer a simple random walk. Another difficulty appearing is the interdependence of the two components of the walk, $X_1$ and $X_2$, which no longer allows to represent them using a two-dimensional simple random walk. To circumvent these obstacles, we will rely on appropriate time changes. 

\subsubsection{A time change}

We introduce a new generator. For all $f: \mathbb{Z}^2 \to \mathbb{R}$ and $y = (y_1,y_2) \in \mathbb{Z}^2$, we define

\begin{equation}
\label{eq:generator_y}
\begin{split}
\mathcal{L}^Y f(y) &= \frac{H(y_2)}{V(y_1)} (f(y+e_1)-2 f(y) + f(y-e_1)) \\
&+ (f(y+e_2)- 2f(y) + f(y-e_2)), 
\end{split}
\end{equation}
and let $Y(t) = (Y_1(t), Y_2(t))$, $t \geq 0$, denote the random walk generated by $\mathcal{L}^Y$. Note that (see e.g. exercice 2.32 in \cite{Liggett2010ContinuousTM}), $(Y(t))_{t \geq 0}$ may be constructed from our original random walk $(X(t))_{t \geq 0}$ via the time change 
\begin{equation}
\label{eq:timechange_x_to_y}
Y(t) = X(A^{-1}(t)), \quad A(t) := \int_0^t V(X_1(s)), \qquad t \geq 0.
\end{equation}
We also denote by $Y^T$ the process with generator as in \eqref{eq:generator_y} but with the random variables $H$ replaced by $H^{(T)}$ as defined in \eqref{eq:def_env_rescaled}. 
The advantage of performing this time change is that, for all $T>0$, the vertical component $(Y^T_2(t))_{t \geq 0}$ is now a simple random walk independent of the environment. In particular, convergence holds for its diffusively rescaled local times thanks to Lemma \ref{lem:conv_lt_srw} (see Appendix). 

Now, for all $T>0$, the first component $Y_1^T(t)$ has quadratic variation
\[\langle Y_1^T, Y_1^T\rangle_t = 2 \int_0^t \frac{H^{(T)}(Y_2^T(s))}{V(Y_1^T(s))} ds, \qquad t \geq 0.\]
Let us introduce processes $D^{V,T}$ and $D^T$ defined by
\begin{equation}
\label{eq:def_dvt}
D^{V,T}(t) := \frac{1}{T^\delta} \int_0^{Tt} \frac{H^{(T)}(Y_2^T(s))}{V(Y_1^T(s))} ds, \quad D^{T}(t) := \frac{1}{T^\delta} \int_0^{Tt} H^{(T)}(Y_2^T(s)) ds, \qquad t \geq 0.
\end{equation}
We have 
$$D^T(t) = \int_\mathbb{R} \tilde{\ell}^{Y,T}_t(x) \, d \mu^T(x)$$
where 
$$\mu^T(dx) = \frac{1}{T^{\frac{1}{2\alpha_1}}} \sum_{x \in T^{-1/2} \mathbb{Z}} H(\sqrt{T} x) \delta_{x} =  \sum_{x \in T^{-1/2} \mathbb{Z}} \left(\mathcal{H} \left(x+ \frac{1}{\sqrt{T}} \right) - \mathcal{H}(x)\right) \, \delta_{x}.$$
Note that $\tilde{\ell}^{Y,T}$ being the local times process of a (rescaled) simple random walk, its law under the annealed measure is independent of $\mathcal{H}$, whence, reasoning as in the proof of Lemma \ref{lem:conv_constant_case}, we obtain the quenched convergence in law, in $C$, $(D^T(t))_{t \geq 0} \underset{T \to \infty}{\Longrightarrow} (\Delta(t))_{t \geq 0}$, where $(\Delta(t))_{t \geq 0}$ is the Kesten-Spitzer process defined in \eqref{eq:def_ks_process_thm}. We therefrom deduce that $(D^{V,T}(t))_{t \geq 0} \underset{T \to \infty}{\Longrightarrow} (\Delta(t))_{t \geq 0}$ also, thanks to the following lemma:

\begin{lemma}
\label{lem:cv_ratio_dv_d}
    For all fixed $t > 0$, we have the almost-sure convergence
    \begin{equation}
    \label{eq:ratio_dv_d}
    \frac{D^{V,T}(t)}{D^T(t)} = \frac{\int_0^{Tt} \frac{H^{(T)}(Y_2^T(s))}{V(Y_1^T(s))} \, ds}{\int_0^{Tt} H^{(T)}(Y_2^T(s)) \, ds}\underset{T \to \infty}{\longrightarrow} 1.
    \end{equation}    
\end{lemma}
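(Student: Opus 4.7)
The strategy is to undo the time change relating $Y^T$ to $X^T$ and then to apply Birkhoff's ergodic theorem for the environment seen from $X^T$, combined with a truncation of the heavy-tailed $H^{(T)}$. Since $Y^T(s) = X^T(A^{-1}(s))$ with $A(u) := \int_0^u V(X_1^T(r))\,dr$, the substitution $s = A(u)$, $ds = V(X_1^T(u))\,du$ gives
\begin{align*}
\int_0^{Tt} \frac{H^{(T)}(Y_2^T(s))}{V(Y_1^T(s))}\,ds &= \int_0^{A^{-1}(Tt)} H^{(T)}(X_2^T(u))\,du, \\
\int_0^{Tt} H^{(T)}(Y_2^T(s))\,ds &= \int_0^{A^{-1}(Tt)} H^{(T)}(X_2^T(u))\,V(X_1^T(u))\,du.
\end{align*}
Hence \eqref{eq:ratio_dv_d} reduces to showing that the $H^{(T)}(X_2^T)$-weighted time-average of $V(X_1^T)$ along the $X^T$-trajectory converges to $\mathbb{E}[V(0)]$, which by the normalisation implicit in Theorem \ref{thm:conv} equals $1$.

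The tool is Birkhoff's theorem for the environment seen from the walker $\omega(u) := \tau_{X^T(u)}\omega$, a Markov process with invariant ergodic measure $\mathbb{P}$. Applied to $f = V(0)$ it yields $A(u)/u \to \mathbb{E}[V(0)]$ a.s., so $A^{-1}(Tt) \sim Tt$. To handle the heavy tails, truncate $H^{(T)} = (H^{(T)} \wedge K_T) + (H^{(T)}-K_T)^+$ with $K_T \to \infty$ slowly. For the bounded piece, Birkhoff applied to $f = (H^{(T)}(0) \wedge K_T)(V(0) - \mathbb{E}[V(0)])$, which has zero mean by independence of $H$ and $V$, shows that the corresponding integral is $o(T)$, hence $o(T^\delta)$ since $\delta = (1+1/\alpha_1)/2 > 1$ in the regime $\alpha_1 < 1$. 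For the tail piece, the key structural observation is that conditionally on $X_2^T$ and on the environment, the horizontal coordinate $X_1^T$ is a time-changed simple random walk \emph{independent} of $V$ (since the horizontal jump rate $H^{(T)}(X_2^T)$ of $X^T$ does not depend on $X_1^T$); on each ``heavy'' line $y_2$ with $H^{(T)}(y_2) > K_T$, during each sojourn of $X^T$ the walker $X_1^T$ makes $\sim K_T \cdot (\text{sojourn time})$ horizontal jumps, and by the local-time law of large numbers for the simple random walk the empirical average of $V(X_1^T)$ over the sojourn converges to $\mathbb{E}[V(0)]$, making the tail contribution to the numerator$-$denominator discrepancy negligible after summation over heavy lines.

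The main technical obstacle is making the tail argument quantitative: one needs uniform equilibration rates for the conditional horizontal simple random walk on heavy lines, together with sharp control of the number and locations of such lines via the stable scaling of $H^{(T)}$, and the cutoff $K_T$ must be tuned so that both the bulk error (from Birkhoff) and the tail error (from the conditional SRW analysis) vanish at the $T^\delta$-scale.
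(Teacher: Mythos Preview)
Your reduction via the time change $s=A(u)$ back to $X^T$ is correct and gives exactly the ratio
\[
\frac{\int_0^{M} H^{(T)}(X_2^T(u))\,du}{\int_0^{M} H^{(T)}(X_2^T(u))\,V(X_1^T(u))\,du},\qquad M:=A^{-1}(Tt),
\]
so the problem indeed reduces to showing that the $H^{(T)}$-weighted time average of $V(X_1^T)$ along the walk equals $\mathbb{E}[V(0)]$. Your subsequent strategy, however, has genuine gaps. First, the Birkhoff step with the observable $(H^{(T)}(0)\wedge K_T)(V(0)-1)$ is not legitimate as written: both the truncation level $K_T$ and the rescaled field $H^{(T)}$ depend on $T$, so the ergodic theorem (which is for a \emph{fixed} observable) does not directly yield an $o(T)$ bound; in the quenched setting this is a real obstacle, not a technicality. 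Second, your tail argument rests on the claim that ``conditionally on $X_2^T$ and on the environment, $X_1^T$ is a time-changed simple random walk independent of $V$'', but this conditioning is circular: $X_2^T$'s jump rate is $V(X_1^T)$, so the trajectory of $X_2^T$ already encodes information about $X_1^T$ and hence about the $V$-scenery visited. You then acknowledge yourself that the quantitative control of the tail contribution is left open.

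The paper's proof avoids all of this with a single extra time change that exploits precisely the structural observation you make but do not use to full effect. Starting from your expression, substitute $v=B(u)$ with $B(u):=\int_0^u H^{(T)}(X_2^T(w))\,dw$. Under this clock the horizontal jump rate becomes constant, so the first component becomes a continuous-time simple random walk $S$ whose law is independent of the environment, and the ratio rewrites as
\[
\frac{B(M)}{\int_0^{B(M)} V(S(v))\,dv}.
\]
Now the ergodic theorem applies directly to the \emph{integrable} observable $V(S(v))$ (no truncation, no heavy tails, no $T$-dependence), giving $\frac{1}{N}\int_0^N V(S(v))\,dv\to\mathbb{E}[V(0)]$ almost surely, and the lemma follows. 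In the paper this is phrased as a time change of $Y^T$ by $E(v)=\int_0^v H^{(T)}(Y_2^T)/V(Y_1^T)\,dr$; composing with your substitution $s=A(u)$ one checks $E\circ A=B$, so the two routes coincide. In short, you were one time change away from the clean argument; the truncation detour is unnecessary and, as written, incomplete.
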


\begin{proof}
    We have 
    \[\frac{D^{V,T}(t)}{D^T(t)} = \frac{\int_0^{Tt} \frac{H^{(T)}(Y_2^T(s))}{V(Y_1^T(s))} \, ds}{\int_0^{Tt} H^{(T)}(Y_2^T(s)) \, ds} = \frac{E(Tt)}{\int_0^{E(Tt)} V(Y_1^T(\tau_u)) \, du}, \]
    where $E(u):= \int_0^u \frac{H^{(T)}(Y_2^T(r))}{V(Y_1^T(r))} \, dr$ and we performed the change of variable $s=\tau_u$, with 
    \[\tau_u := \inf \{v \geq 0: E(v) > u\}. \]
    Now, the process $(Y_1^T(\tau_u),Y_2^T(\tau_u))$, $u \geq 0$, has generator given, for all $f: \mathbb{Z}^2 \to \mathbb{R}$ and $y =
    (y_1,y_2) \in \mathbb{Z}^2$, by
    \[\begin{split}
    \tilde{\mathcal{L}}^Y f(y) &:= (f(y + e_1) - 2f(y) + f(y-e_1)) \\ 
    &+ \frac{V(y_1)}{H^{(T)}(y_2)} (f(y + e_2) - 2f(y) + f(y-e_2)).
    \end{split}\]
    Note that, for all fixed $\omega \in \Omega$, under $P^\omega_0$, $(Y^T_1(\tau_u))_{u \geq 0}$ has the law of a simple random walk. In particular, the process $(Y^T_1(\tau_u))_{u \geq 0}$ is independent of the environment. We thus obtain 
    \[ \frac{D^{V,T}(t)}{D^T(t)} = \frac{E(Tt)}{\int_0^{E(Tt)} V(S(u)) \, du},\]
    where $(S(u))_{u \geq 0}$ is a simple random walk independent of $(V(x))_{x \in \mathbb{Z}}$. Since the $V(x)$ are i.i.d. random variables, 
      by an application of the ergodic theorem, we have $\mathbb{P}(d \omega) \times P^\omega_0$-a.s.,
    \[ \frac{1}{M} \int_0^M V(S(u)) \, du \underset{M \to \infty}{\longrightarrow} \mathbb{E}[V(0)] =1.\]
    Therefore, a.s.,  
    \[\frac{D^{V,T}(t)}{D^T(t)} = \frac{E(Tt)}{\int_0^{E(Tt)} V(S(u)) \, du} \underset{M \to \infty}{\longrightarrow} 1\]
    as claimed.
\end{proof}

As a consequence,
\begin{corollary}
 $(D^{V,T}(t))_{t \geq 0}$ converges in law in $C$, as $T \to \infty$, to the Kesten-Spitzer process $(\Delta(t))_{t \geq 0}$. 
 \end{corollary}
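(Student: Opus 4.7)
The plan is to combine the convergence in law $(D^T(t))_{t \geq 0} \Rightarrow (\Delta(t))_{t \geq 0}$, established just before Lemma \ref{lem:cv_ratio_dv_d}, with the ratio convergence from that lemma via a Slutsky-type argument, and then upgrade the resulting finite-dimensional convergence to convergence in $C([0,\infty))$ by exploiting the monotonicity of $D^{V,T}$ and the continuity of the limit $\Delta$.

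\textbf{Step 1: finite-dimensional distributions.} Fix $0 < t_1 < \ldots < t_k$ and decompose
\[
D^{V,T}(t_j) \;=\; D^T(t_j) \cdot R^T(t_j), \qquad R^T(t_j) := D^{V,T}(t_j)/D^T(t_j), \quad j = 1, \ldots, k.
\]
Since $D^T \Rightarrow \Delta$ in $C$, the vector $(D^T(t_j))_{j=1,\ldots,k}$ converges in law to $(\Delta(t_j))_{j=1,\ldots,k}$. Lemma \ref{lem:cv_ratio_dv_d} yields $R^T(t_j) \to 1$ almost surely for each $j$, hence $(R^T(t_j))_{j=1,\ldots,k} \to (1,\ldots,1)$ in probability (jointly). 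Slutsky's theorem then gives
\[
(D^{V,T}(t_1), \ldots, D^{V,T}(t_k)) \;\Longrightarrow\; (\Delta(t_1), \ldots, \Delta(t_k)).
\]

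\textbf{Step 2: upgrade to convergence in $C$.} Each $D^{V,T}$ is continuous and non-decreasing in $t$, being the integral of a non-negative function, and the Kesten–Spitzer process $\Delta$ is a.s. continuous and non-decreasing. By Skorohod's representation theorem, I pass to a common probability space on which $D^T \to \Delta$ uniformly on compact sets, and simultaneously $D^{V,T}(q) \to \Delta(q)$ almost surely for every $q$ in a prescribed countable dense set $Q \subset [0,\infty)$ containing $0$ (using that a countable intersection of full-probability events remains of full probability, and that $D^{V,T}(0) = 0 = \Delta(0)$). A classical Dini-type lemma for non-decreasing functions then asserts that pointwise convergence on a dense set to a continuous non-decreasing limit forces uniform convergence on compact subsets of $[0,\infty)$. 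Hence $D^{V,T} \to \Delta$ uniformly on compacts a.s., which yields the announced convergence in law in $C$.

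\textbf{Expected obstacle.} The delicate point is the behavior near $t = 0$: if $D^{V,T}$ were to accumulate a positive jump near $0$ in the limit, the monotonicity argument could fail. This is ruled out by the fact that $D^{V,T}(0) = 0 = \Delta(0)$ together with the almost-sure continuity of $\Delta$ at $0$, so that pointwise control on the dense set $Q$ near $0$ traps $D^{V,T}$ between $0$ and a quantity converging to $\Delta(\varepsilon)$ for arbitrarily small $\varepsilon > 0$. This closes the gap and makes the monotone Dini argument go through uniformly on every compact $[0,t]$.
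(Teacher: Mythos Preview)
Your Step~1 (finite-dimensional convergence via Slutsky) matches the paper exactly. In Step~2 you diverge: the paper obtains tightness of $(D^{V,T})_{T \geq 1}$ in $C$ directly from the pointwise bound
\[
|D^{V,T}(t) - D^{V,T}(s)| \;\leq\; \frac{1}{c}\,(D^T(t) - D^T(s)),
\]
which comes from the ellipticity hypothesis $V \geq c$ in Assumption~\ref{assump:H_V_tail_bis}(1). This transfers the modulus of continuity of $D^T$ to $D^{V,T}$ in one line, and tightness of $D^T$ does the rest. Your route instead uses only the monotonicity of $D^{V,T}$ and the continuity of $\Delta$; this is a genuinely different, more general argument (it would work without a uniform lower bound on $V$), but the Skorohod step as written has a small wrinkle. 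After coupling on $C \times \mathbb{R}^Q$, the object $\tilde{D}^{V,T}$ is only a random element of $\mathbb{R}^Q$, not of $C([0,\infty))$, so your Dini argument yields uniform convergence on $Q$ only, and you still have to get back to convergence of the full $C$-valued processes. To close this you can either (a) extend $\tilde{D}^{V,T}$ from $Q$ to $[0,\infty)$ monotonically and check this extension has the same law as $D^{V,T}$ (it does, by continuity of the latter), or (b) skip Skorohod and argue tightness in $C$ directly: given $\varepsilon,\eta>0$, continuity of $\Delta$ provides a partition $0=t_0<\ldots<t_m$ with $P(\max_i(\Delta(t_{i+1})-\Delta(t_i))>\varepsilon/3)<\eta$; finite-dimensional convergence then controls $\max_i(D^{V,T}(t_{i+1})-D^{V,T}(t_i))$ for large $T$, and monotonicity bounds $w(D^{V,T},\delta)$ for $\delta$ below the mesh. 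Either fix is routine, so your approach is sound, but the paper's comparison inequality is shorter and uses a structural feature of the model you did not exploit.
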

\begin{proof}
 This follows from the convergence in law $(D^{T}(t))_{t \geq 0} \underset{T \to \infty}{\Longrightarrow} (\Delta(t))_{t \geq 0}$, together with the following two facts:
\begin{enumerate}
\item[(i)] The family of processes $D^{V,T}$, $T\geq 1$, is tight in $C$ since, for all $t,s \geq 0$ with $s<t$
$$|D^{V,T}_t - D^{V,T}_s| \leq \frac{1}{c} \frac{1}{T^\delta} \int_0^{Tt} H^{(T)}(Y_2^T(r)) dr = \frac{1}{c} (D^T(t) - D^T(s)), $$
where $c$ is as in \eqref{assump:H_V_tail}, so, in view of the tightness criterion of \cite[Theorem 7.3]{billingsleySndEdition}, the claimed tightness is implied by the tightness of $D^T$, $T\geq 1$, 
\item[(ii)] By Lemma \ref{lem:cv_ratio_dv_d} and Slutsky's Lemma, the finite-dimensional marginals of $D^{V,T}(t)$ have the same limit in law as those of $D^T$.
\end{enumerate}
\end{proof}

Thus, recalling that $Y^{(T)}_1(t) := \frac{1}{T^{\delta/2}} Y_1^T(Tt)$, $t \geq 0$, we see that $(Y^{(T)}_1(t))_{t \geq 0}$ is a martingale with quadratic variation
\[ \langle Y^{(T)}_1, Y^{(T)}_1 \rangle_t = \frac{2}{T^\delta} \int_0^{Tt} \frac{H^{(T)}(Y^T_2(s))}{V(Y_1^T(s))} \, ds = 2 \, D^{V,T}(t) \]
which converges in law, when $T \to \infty$, to $ (2 \,\Delta(t))_{t \geq 0}$. An important fact in the sequel will be the fact that the martingale property is preserved when we take a limit in law. This will be guaranteed by the following lemma:

\begin{lemma}
\label{lem:ui}
For all $t>0$ and a.e. trajectory of $(\mathcal{H}(x))_{x \in \mathbb{R}}$, we have 
$$\sup_{T \geq 1} E^\omega_0[(D^T(t))^2] < \infty,$$ 
in particular, the family of random variables $(Y^T_1(t)^2)_{T \geq 1}$ is uniformly integrable with respect to $P^\omega_0$. 
\end{lemma}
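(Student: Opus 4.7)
The plan is to write $D^T(t)$ as an integral of the rescaled random-walk local time against the environmental measure and then use that $Y^T_2$ is (for every $T$) a simple random walk \textbf{independent of} the random variables $H^{(T)}(\cdot)$. Concretely, with $\tilde{\ell}^{Y,T}_t$ and $\mu^T$ defined exactly as in the proof of the preceding corollary, one has $D^T(t) = \int_{\mathbb{R}} \tilde{\ell}^{Y,T}_t(x) \, d\mu^T(x)$, and since under $P^\omega_0$ the measure $\mu^T$ is deterministic while the distribution of $(Y^T_2(s))_{s\geq 0}$ does not depend on $\omega$, Fubini yields
\[
E^\omega_0[(D^T(t))^2] \;=\; \iint_{\mathbb{R}^2} K^T(t;x,y)\, d\mu^T(x)\, d\mu^T(y), \qquad K^T(t;x,y) := E_0[\tilde{\ell}^{Y,T}_t(x)\, \tilde{\ell}^{Y,T}_t(y)].
\]

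The first key step is to establish a uniform (in $T \geq 1$) Gaussian bound of the form
\[
K^T(t;x,y) \;\leq\; C_t\, e^{-c_t(x^2+y^2)}, \qquad x,y \in T^{-1/2}\mathbb{Z},
\]
which follows from Cauchy--Schwarz together with a second-moment estimate $E_0[\tilde{\ell}^{Y,T}_t(x)^2] \leq C_t e^{-c_t x^2}$. The latter is derived from the standard sub-Gaussian heat-kernel upper bound $P_0(Y^T_2(s)=\lfloor \sqrt{T} x\rfloor) \leq (c/\sqrt{s})\, e^{-c' T x^2/s}$ for the continuous-time simple random walk, integrated twice via the Markov property. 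The second step is to control $\int e^{-c_t x^2}\, d\mu^T(x)$ uniformly in $T \geq 1$: from the explicit expression $\mu^T([a,b]) = \mathcal{H}(b+T^{-1/2})-\mathcal{H}(a)$ one has the uniform monotonicity bound $\mu^T([k-1,k]) \leq \mathcal{H}(k+1)-\mathcal{H}(k-2)$ for $T \geq 1$, whence
\[
\int_{\mathbb{R}} e^{-c_t x^2}\, d\mu^T(x) \;\leq\; \sum_{k \in \mathbb{Z}} e^{-c_t (|k|-1)^2}\bigl(\mathcal{H}(k+1)-\mathcal{H}(k-2)\bigr) \;=:\; J(\omega,t).
\]
By the almost-sure growth $\mathcal{H}(k+1)-\mathcal{H}(k-2) = O(|k|^{1/\alpha_1+\eta})$ for $|k| \to \infty$ (a consequence of Borel--Cantelli applied to the stable scaling of increments), we have $J(\omega,t) < \infty$ for $\mathbb{P}$-a.e. $\omega$. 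Combining the two steps gives $\sup_{T \geq 1} E^\omega_0[(D^T(t))^2] \leq C_t\, J(\omega,t)^2 < \infty$ $\mathbb{P}$-a.s., as required.

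For the uniform integrability assertion, the natural interpretation is for the rescaled process $Y^{(T)}_1(t) := T^{-\delta/2}\, Y^T_1(Tt)$ (the object relevant to the limit theorem); this is a martingale whose predictable quadratic variation satisfies $\langle Y^{(T)}_1\rangle_t = 2\, D^{V,T}(t) \leq 2 c^{-1}\, D^T(t)$ by Hypothesis (H1). The Burkholder--Davis--Gundy inequality then gives
\[
E^\omega_0\bigl[\,\sup_{s \leq t} |Y^{(T)}_1(s)|^4\bigr] \;\leq\; C\, E^\omega_0[\langle Y^{(T)}_1\rangle_t^2] \;\leq\; 4Cc^{-2}\, E^\omega_0[(D^T(t))^2],
\]
and the first part of the lemma bounds the right-hand side uniformly in $T$. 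Hence $(Y^{(T)}_1(t)^2)_{T \geq 1}$ is $L^2$-bounded and in particular uniformly integrable. The main obstacle is the uniform Gaussian estimate on $K^T$: while morally it is a discrete analog of the $L^2$-bound on Brownian local time, one needs a version that holds with constants \emph{independent of $T$}, which is the reason for passing through the heat-kernel upper bound and Cauchy--Schwarz rather than invoking the scaling limit of the local time directly.
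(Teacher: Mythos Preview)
Your argument is correct, and its architecture matches the paper's: write $(D^T(t))^2$ as a double integral of the rescaled local time against $\mu^T\otimes\mu^T$, bound the kernel by a function with sufficient decay in $|x|,|y|$, then control $\int(\cdot)\,d\mu^T$ uniformly in $T$ by a quantity depending only on the realisation of $\mathcal{H}$; the uniform integrability via BDG and $D^{V,T}\le c^{-1}D^T$ is identical. The differences lie in the two key estimates. For the kernel, you go for a Gaussian bound $E_0[\tilde\ell^{Y,T}_t(x)^2]\le C_t e^{-c_t x^2}$ via the heat-kernel upper bound, whereas the paper inserts the indicator $\mathbf{1}\{\max_{[0,T]}|Y_2|\ge\sqrt{T}|x|\}$, splits by Cauchy--Schwarz into a uniform $L^4$ bound on the local time (already available from the appendix) and the polynomial tail $P(\max|Y_2|\ge\sqrt T|x|)\le c|x|^{-2k}$, arriving at a polynomial envelope $1\wedge|x|^{-2k}$ rather than a Gaussian. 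For the $\mu^T$-integral, you bound increments of $\mathcal H$ over unit intervals via Borel--Cantelli, while the paper performs a summation by parts and invokes the a.s.\ growth $\mathcal H(x)=o(|x|^m)$ for $m>1/\alpha_1$ to show that $\int(1\wedge|x|^{-2k})\,d\mu^T(x)$ converges to a finite limit. Your route is arguably more direct and the Gaussian decay is stronger than needed; the paper's route has the advantage of recycling estimates already proved elsewhere in the article. One small caveat: the heat-kernel bound $P_0(Y_2(s)=k)\le(c/\sqrt s)e^{-c'k^2/s}$ as stated is not quite uniform for small $s$ (the prefactor blows up), so in practice you would either use $c/\sqrt{s\vee1}$ or, more cleanly, bound $E_0[\ell^T_t(x)^2]\le P_0(\tau_{k}\le Tt)\,E_0[\ell^T_t(0)^2]$ via the strong Markov property and then use the reflection principle to get the Gaussian tail on the hitting probability; either fix is routine.
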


\begin{proof}
The uniform integrability claim follows from the estimate. Indeed, by the BDG inequality
$$E^\omega_0[Y^T_1(t)^4] \leq C E^\omega_0 [\langle Y^T_1, Y^T_1 \rangle_t^2] = 4 C E^\omega_0[(D^{V,T}(t))^2],$$
uniformly over $T \geq 1$, where $C>0$ is the BDG constant for $p=4$. In turn, by the first condition of Assumption \ref{assump:H_V_tail_bis}, we have $D^{V,T}(t) \leq \frac{1}{c} D^T(t)$, whence the claimed estimate will yield $\sup_{T \geq 1} E^\omega_0[Y^T_1(t)^4] < \infty$, thereby providing uniform integrability of $(Y^T_1(t)^2)_{T \geq 1}$. There remains to prove the uniform estimate on $E^\omega_0[(D^T(t))^2]$. To alleviate notations, let us assume that $t=1$, the proof for other values of $t$ is similar. Note that 
$$(D^T(1))^2 = \int_{\mathbb{R}} \int_{\mathbb{R}} \tilde{\ell}^{Y,T}_1(x) \, \tilde{\ell}^{Y,T}_1(y) \, d \mu^T(x) d \mu^T(y).$$
Since $\tilde{\ell}^{Y,T}_1(x) = \ell^{T}_1(x) $ for all $x \in \mathbb{Z}/ \sqrt{T}$, and since $\ell^T_1(x) = 0$ unless $\sqrt{T} |x| \leq \max_{[0,T]} |Y_2|$, we can rewrite the above as
$$\int_{\mathbb{R}} \int_{\mathbb{R}} \ell^{Y,T}_1(x) \, \mathbf{1}_{\{\max_{[0,T]} |Y_2| \geq \sqrt{T} |x|\}} \, \ell^{Y,T}_t(y) \, \mathbf{1}_{\{\max_{[0,T]} |Y_2| \geq |y|\}} d \mu^T(x) d \mu^T(y). $$
Taking expectations and applying Cauchy-Schwarz shows that $E^\omega_0[(D^T(1))^2]$ is bounded by
$$\int_{\mathbb{R}^2} E^\omega_0[\ell^{Y,T}_1(x)^4]^{\frac{1}{4}} \, E^\omega_0[\ell^{Y,T}_1(y)^4]^{\frac{1}{4}} \, \mathbb{P} (\max_{[0,T]} |Y_2| \geq \sqrt{T} |x|) \, \mathbb{P}(\max_{[0,T]} |Y_2| \geq \sqrt{T} |y|) \, d \mu^T(x) \, d \mu^T(y). $$ 
But, by \eqref{eq.bound.lp.lt} (see Subsection \ref{subsec:appendix_local_times} of the Appendix), $E^\omega_0[\ell^{Y,T}_1(x)^4] \leq c(2)$ where $c(2)>0$ is a constant. In addition, fixing $k \in \mathbb{N}$ sufficiently large (in a way that will be specified later), there exists $c>0$ depending only on $k$ such that, for all $x \in \mathbb{R}$ such that $|x| \geq 1$, 
\begin{equation}
\label{eq:bound_deviation_maxY2}
\mathbb{P}(\max_{[0,T]} |Y_2| \geq \sqrt{T} |x|) \leq c \, |x|^{-2k}.
\end{equation}
Indeed, since $Y_2$ is a simple random walk, we can represent it as $Y_2(t) = S_{N_t}$, for $(S_k)_{k \in \mathbb{N}}$ a discrete-time simple random walk, and $(N_t)_{t \geq 0}$ a standard Poisson process independent of $Y_2$.  Now, by equation (12.11) in \cite{lawler2010random}, we have, for all $n \geq 1$, and $\lambda >0$, 
$$\mathbb{P}(\max_{0 \leq j \leq n} |S_j| \geq  \lambda \sqrt{n}) \leq c \lambda^{-2k}.$$ 
Hence,
\[\begin{split}
\mathbb{P}(\max_{[0,T]} |Y_2| \geq \sqrt{T} |x|)  &= \sum_{n=0}^\infty \mathbb{P} \left( N_T=n , \max_{0 \leq j \leq n} |S_j| \geq  |x| \sqrt{\frac{T}{n}} \sqrt{n} \right) \\
&\leq  c T^{-k} \sum_{n=0}^\infty \mathbb{P}(N_T=n) \, n^k  |x|^{-2k} = c T^{-k} \, \mathbb{E}[N_T^k] \, |x|^{-2k}.
\end{split}\]
By the bound of \cite{AHLE2022109306}, we have $\mathbb{E}[N_T^k] \leq c_k T^k$, where $c_k = e^{\frac{k^2}{2}}$ (note that $T \geq 1$). The claimed bound \eqref{eq:bound_deviation_maxY2} follows. Thus we get
$$E^\omega_0[(D^T(1))^2] \leq C \left(\int_\mathbb{R} (1 \wedge |x|^{-2k}) \, d \mu_T(x) \right)^2, $$
for some constant $C$ depending only on $k$.  Note that, for any $m > \frac{1}{\alpha_1}$,  by Theorem 13 of \cite{bertoin1996levy} with $\Phi(\lambda) = \lambda^\alpha_1$ and $h(t) = t^{m}$, we have $\mathcal{H}(x) \underset{|x| \to \infty}{=} o(x^{m})$ almost-surely.  
Hence let us choose $k$ such that $2k > \frac{1}{\alpha_1}$. As $T \to \infty$, $\int_{\mathbb{R}} (1 \wedge |x|^{-2k}) \, d \mu_T(x)$ converges to
\begin{equation}
\label{eq:continu_limit_integral}
2k \int_{\mathbb{R}} \mathbf{1}_{\{|x| \geq 1\}} |x|^{-2k-1} \, \mathcal{H}(x) \, dx, 
\end{equation}
as can be verified upon writing 
$$\int_{\mathbb{R}} (1 \wedge |x|^{-2k}) \, d \mu_T(x) =  \sum_{k \in \mathbb{Z}} (1 \wedge |k T^{-1/2}|^{-2k}) \left(\mathcal{H}((k+1)T^{-1/2}) -\mathcal{H}(k T^{-1/2}) \right) $$
and performing a discrete summation by parts. Since $2k > \frac{1}{\alpha_1}$, the integral in \eqref{eq:continu_limit_integral} is finite, and hence $\sup_{T \geq 1} E^\omega_0[(D^T(1))^2] < \infty$.
\end{proof}


\subsubsection{Martingale convergence using a Girsanov argument}

We can prove the following scaling limit result for the time-changed random walk $(Y(t))_{t \geq 0}$.

\begin{proposition}
\label{thm:conv_y}
For $\mathbb{P}$ a.e. environment, when $T \to \infty$, the process $(Y^{(T)}(t))_{t \geq 0}$ converges in law, in $D([0,\infty), \mathbb{R})^2$, to the process $(\mathcal{Y}_1(t), \mathcal{Y}_2(t))_{t \geq 0}$, where 
\begin{equation}
\label{eq:expression_limit_process}
\mathcal{Y}_1(t) = B_1(\Delta(t)), \quad \mathcal{Y}_2(t) = B_2(t), \qquad t \geq 0,
\end{equation}
where $B_1$ and $B_2$ are two independent Brownian motions with diffusion coefficient $\sqrt{2}$ and $\Delta(t) = \int_{\mathbb{R}} L^{B_2}(t,x) \, \mathcal{H}(dx)$, $t \geq 0$.
\end{proposition}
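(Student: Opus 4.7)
The plan is to exploit the fact that $Y^{(T)}_1$ is a $P^\omega_0$-martingale with vanishing jumps (of size $T^{-\delta/2}$) and predictable quadratic variation $\langle Y^{(T)}_1 \rangle_t = 2 D^{V,T}(t)$, together with the already-proven joint convergence structure, and then apply a functional martingale CLT to identify the limit as a time-changed Brownian motion orthogonal to $B_2$.

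First, I would upgrade the convergence $D^{V,T} \Rightarrow \Delta$ to \emph{joint} convergence of the triple $(Y_2^{(T)}, D^T, D^{V,T})$ with $(B_2, \Delta, \Delta)$ in $D([0,\infty)) \times C([0,\infty))^2$. Donsker's theorem yields $Y_2^{(T)} \Rightarrow B_2$ since $Y_2^T$ is, crucially, a rate-$2$ simple random walk independent of the environment $\mathcal{H}$. The convergence $D^T \Rightarrow \Delta$ was established by writing $D^T(t) = \int \tilde\ell^{Y,T}_t(x)\,\mu^T(dx)$ and using joint convergence of local times (Lemma \ref{lem:conv_lt_srw}) and the vague convergence $\mu^T \to \mathcal{H}(dx)$; since both factors are functionals of $(Y_2^T,\mathcal{H})$, this convergence is automatically joint with $Y_2^{(T)} \Rightarrow B_2$. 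Finally, Lemma \ref{lem:cv_ratio_dv_d} together with Slutsky transfers joint convergence from $D^T$ to $D^{V,T}$.

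Second, I would prove tightness of $(Y^{(T)}_1)_{T \ge 1}$ in $D([0,\infty))$ and extract a subsequential limit jointly with the previous triple. Tightness follows from Aldous' criterion applied to the martingale $Y^{(T)}_1$: the second-moment bound of Lemma \ref{lem:ui} together with the uniform smallness of the jumps and the tightness of the quadratic variation $2 D^{V,T}$ gives control of both size and oscillations. Along any subsequential limit we thus obtain a random element
\begin{equation*}
(Y^{(T)}_1, Y^{(T)}_2, D^{V,T}) \Longrightarrow (\mathcal{Y}_1, B_2, \Delta),
\end{equation*}
where convergence holds in $D\times D\times C$ on some probability space by Skorokhod's representation.

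Third, I would apply the functional martingale CLT for càdlàg martingales with vanishing jumps (e.g.\ Theorem VIII.3.11 or IX.1.17 of Jacod--Shiryaev, or the analogous result in Ethier--Kurtz): since $|\Delta Y^{(T)}_1| \le T^{-\delta/2} \to 0$ uniformly, and $\langle Y^{(T)}_1 \rangle_\cdot = 2 D^{V,T} \to 2\Delta$ jointly with $Y_2^{(T)}$, the limit $\mathcal{Y}_1$ must be a \emph{continuous} local martingale (with respect to the natural limiting filtration) with $\langle \mathcal{Y}_1 \rangle_t = 2\Delta(t)$. Moreover, since for every $T$ the walks $Y^T_1$ and $Y^T_2$ never jump simultaneously, the orthogonality $\langle Y^{(T)}_1, Y^{(T)}_2 \rangle \equiv 0$ passes to the limit to give $\langle \mathcal{Y}_1, B_2\rangle \equiv 0$.

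Finally, I would identify $\mathcal{Y}_1$ with $B_1 \circ \Delta$ for a Brownian motion $B_1$ independent of $(B_2,\mathcal{H})$. Enlarging the probability space if necessary, the Dambis--Dubins--Schwarz representation (applied to the continuous martingale $\mathcal{Y}_1$ with strictly increasing, continuous clock $\Delta$) yields $\mathcal{Y}_1 = B_1 \circ \Delta$ with $B_1$ Brownian of diffusion coefficient $\sqrt{2}$; the orthogonality $\langle \mathcal{Y}_1, B_2\rangle = 0$ translates, via a Girsanov/Knight-type argument on the enlarged filtration containing $(B_2,\mathcal{H})$, into the independence of $B_1$ from the pair $(B_2,\mathcal{H})$, which is where the Girsanov input announced in the subsection title enters. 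This uniquely identifies the law of the subsequential limit, so the whole sequence converges to $(B_1\circ\Delta, B_2)$, yielding \eqref{eq:expression_limit_process}.

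The step I expect to require the most care is the independence of $B_1$ from $(B_2,\mathcal{H})$: orthogonality of continuous martingales only gives independence directly in the Gaussian (here, Brownian) case, and one has to argue at the level of the enlarged filtration and conditional characteristic functions, either by Knight's theorem for orthogonal continuous martingales or by a Girsanov change of measure under which $Y^T_1$ and $Y^T_2$ become approximately decoupled, to ensure that the driving Brownian motion $B_1$ is truly independent of the data defining $\Delta$.
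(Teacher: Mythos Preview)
Your proposal is correct and follows essentially the same route as the paper: establish tightness, pass the martingale structure and quadratic (co)variations to a subsequential limit (the paper does this via the uniform integrability of Lemma~\ref{lem:ui} rather than a black-box functional CLT), apply a DDS/time-change to extract $B_1$, and then run a Girsanov argument to obtain independence of $B_1$ from $(B_2,\mathcal{H})$. The only tactical difference is tightness of $Y^{(T)}_1$: the paper does not use Aldous' criterion but instead writes $Y^{(T)}_1 = S_1^{(T^\delta)} \circ D^{V,T}$ as a time-changed simple random walk and invokes Corollary~\ref{cor:comp_tightness}; otherwise the arguments align step by step, and your identification of the independence step as the delicate point matches the paper's emphasis exactly.
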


\begin{proof}
Note that although the processes $Y^{(T)}_1$ and  $Y^{(T)}_2$ are defined with different scaling exponents, they are adapted to the same filtration
\[\sigma\{Y(Ts), \, s \leq t\}, \qquad t \geq 0. \]
They are actually martingales for the above filtration, their respective quadratic variations are given by 
\[ \langle Y^{(T)}_1, Y^{(T)}_1 \rangle_t =  2 D^{V,T}(t), \quad \langle Y^{(T)}_2, Y^{(T)}_2 \rangle_t  = 2 t, \quad \langle Y^{(T)}_1, Y^{(T)}_2 \rangle_t = 0 \qquad t \geq 0, \]
where we recall that $(D^{V,T}(t))_{t \geq 0}$ converges in law, as $T \to \infty$, to the Kesten-Spitzer process $(\Delta(t))_{t \geq 0}$. Note further that
$Y^{(T)}_2 = \frac{1}{\sqrt{T}}Y_2(Tt)$ where $Y_2$ is a simple random walk, while we can write $Y^{(T)}_1 = S^{T^\delta}_1 \circ D^T$, where 
$$S^{T^\delta}_1(t) = \frac{1}{T^{\delta/2}} S_1(T^\delta t),  \quad t \geq 0,$$
where $S_1$ is a one-dimensional simple random walk. By Donsker's theorem $(Y^{(T)}_2)_{T >0}$ and $(S^{T^\delta}_1)_{T >0}$ both converge in law, in $D([0,\infty))$, to the law of a Brownian motion; in particular, these sequences are both tight in $D([0,\infty))$. By Corollary \ref{cor:comp_tightness}, $(Y^{(T)}_1)_{T>0}$ is therefore tight as well. So the sequence of processes $(Y^{(T)}_1, Y^{(T)}_2)$ is tight in $D([0,\infty))^2$. Let us denote by $\mathcal{Y}(t):=(\mathcal{Y}_1(t),\mathcal{Y}_2(t))$, $t \geq 0$, a sub-sequential limit thereof, whose law we set out to determine.
By the above, for all $T>0$, the processes
\[Y^{(T)}_1(t), \quad Y^{(T)}_2(t),  \quad Y^{(T)}_1(t)^2 - 2 D^{V,T}(t), \quad Y^{(T)}_1(t) Y^{(T)}_2(t), \quad \hbox{and} \quad Y^{(T)}_2(t) - 2t \]
are martingales with respect to the filtration 
\[\mathcal{F}^T_t := \sigma\{\mathcal{Y}^T(s), \, s \leq t\}, \qquad t \geq 0. \]
Thanks to the uniform integrability ensured by Lemma \ref{lem:ui}, sending $T \to \infty$, we deduce that
\[\mathcal{Y}_1(t), \quad \mathcal{Y}_2(t),  \quad \mathcal{Y}_1(t)^2 - 2 \Delta(t), \quad \mathcal{Y}_1(t) \mathcal{Y}_2(t), \quad \hbox{and} \quad \mathcal{Y}_2(t) - 2t \]
are martingales with respect to the filtration 
\[\mathcal{F}_t := \sigma\{\mathcal{Y}(s), \, s \leq t\}, \qquad t \geq 0. \]
Thus $(\mathcal{Y}_t)_{t \geq 0}$ is a \textit{continuous} martingale with respect to the filtration $(\mathcal{F}_t)_{t \geq 0}$ with quadratic variations given by 
\[ \langle \mathcal{Y}_1, \mathcal{Y}_1 \rangle_t =  2\Delta_t, \quad \langle \mathcal{Y}_2, \mathcal{Y}_2 \rangle_t  = 2t, \quad \langle \mathcal{Y}_1, \mathcal{Y}_2 \rangle_t = 0, \qquad t \geq 0. \]
Moreover, by the convergence result of the previous section, $\Delta$ is expressed in terms of $\mathcal{Y}_2$ as $\Delta(t) = \int_{\mathbb{R}} L^{\mathcal{Y}_2}(t,x) \, \mathcal{H}(dx)$. Now, for all $s \geq 0$, $\Delta^{-1}(s)$ is an $(\mathcal{F}_t)_{t \geq 0}$  stopping time.  By the optional stopping theorem (see Theorem (3.2) of Chapter II in \cite{revuz2013continuous}), we  deduce that, for all $n \geq 1$ finite and $r < s$, 
\[ E^\omega_0[\mathcal{Y}_i (\Delta^{-1}(s) \wedge n) | \mathcal{F}_{\Delta^{-1}(r)} ] =  \mathcal{Y}_i (\Delta^{-1}(r) \wedge n), \qquad i =1,2, \]
that is
\[ E^\omega_0[\mathcal{Z}_i (s \wedge \Delta(n)) | \mathcal{G}_r ] =  \mathcal{Z}_i (r \wedge \Delta(n)), \qquad i =1,2. \]
where $\mathcal{Z}(s) := \mathcal{Y}(\Delta^{-1}(s))$, $s \geq 0$, and where we introduced the filtration $\mathcal{G}_u := \mathcal{F}_{\Delta^{-1}(u)}$, $u \geq 0$. Since $\Delta(n) \underset{n \to \infty}{\longrightarrow} \infty$ a.s. (by Lemma \ref{lem:delta_increasing}), we deduce that $(\mathcal{Z}(s))_{s \geq 0}$ is a $(\mathcal{G}_u)_{u \geq 0}$ local martingale, with quadratic variations given by
\[ \langle \mathcal{Z}_1, \mathcal{Z}_1 \rangle_s = 2 s, \quad \langle \mathcal{Z}_2, \mathcal{Z}_2 \rangle_s  = 2 \Delta^{-1}(s), \quad \langle \mathcal{Z}_1, \mathcal{Z}_2 \rangle_s = 0 \qquad s \geq 0. \]
In particular, $\mathcal{Z}_1 (s)  = B_1(s)$, where $B_1$ is a $(\mathcal{G}_s)_{s \geq 0}$ Brownian motion with diffusion coefficient $\sqrt{2}$. Let us now characterize the joint law of $(\mathcal{Z}_1, \mathcal{Z}_2)$. Let $\alpha \in L^2(\mathbb{R}_+)$ and let $\Phi$ be a continuous, bounded function on $C([0,\infty))^2$. By Girsanov's theorem
\[\begin{split}
E^\omega_0 \left[\exp \left(\int_0^{+\infty} \alpha_s dB_1(s) \right) \Phi(\mathcal{Z}_2, \Delta)\right] &= \exp \left(\frac{1}{2} \int_0^{+\infty} \alpha_s^2 ds \right) \tilde{E}^\omega_0 \left[ \Phi(\mathcal{Z}_2, \Delta)\right]
\end{split}\]
where $\frac{d \tilde{P}^\omega_0}{d P^\omega_0} \biggr \rvert_{\mathcal{G}_t} =  \exp \left( \int_0^{+\infty} \alpha_s dB_1(s) - \frac{1}{2} \int_0^{+\infty} \alpha_s^2 ds \right)$. Now recall that 
\[\forall s \geq 0, \qquad \langle B_1, \mathcal{Z}_2\rangle_s = \langle \mathcal{Z}_1, \mathcal{Z}_2 \rangle_s = 0. \]
Hence under $ \tilde{P}^\omega_0$,  $(\mathcal{Z}_2(s))_{t \geq 0}$ is still a $(\mathcal{G}_s)_{s \geq 0}$ local martingale, with quadratic variation given by $2 \Delta^{-1}(s)$. Therefore, by another application of the optional stopping theorem as above, and using the fact that $\Delta^{-1}(n) \underset{n \to \infty}{\longrightarrow} \infty$ (see Lemma \ref{lem:delta_increasing}) , we deduce that the process $\mathcal{Z}_2(\Delta(t)) = \mathcal{Y}_2(t)$ is a continuous local martingale under $\tilde{P}^\omega_0$ with respect to the filtration $(\mathcal{F}_t)_{t \geq 0}$, and with quadratic variation $\langle \mathcal{Y}_2, \mathcal{Y}_2 \rangle_t = 2 t$. By L\'{e}vy's characterization theorem, under $\tilde{P}^\omega_0$, we still have $\mathcal{Y}_2(t) = B_2(t)$ where $B_2$ is an $(\mathcal{F}_t)_{t \geq 0}$ Brownian motion with diffusion coefficient $\sqrt{2}$, so that the joint law of the processes $\Delta(t) = \int_{\mathbb{R}} L^{\mathcal{Y}_2}_t(x) \, \mathcal{H}(dx)$ and of $\mathcal{Z}_2(t) = \mathcal{Y}_2 (\Delta^{-1}(t))$, $t \geq 0$, remains unchanged. We therefore get
\[\begin{split}
E^\omega_0 \left[\exp \left(\int_0^{+\infty} \alpha_s dB_1(s) \right) \Phi(\mathcal{Z}_2, \Delta)\right] &= \exp \left(\frac{1}{2} \int_0^{+\infty} \alpha_s^2 ds \right) E^\omega_0 \left[\Phi(\mathcal{Z}_2,\Delta)\right].
\end{split}\]
This shows that $B_1$ is a Brownian motion independent of $(\mathcal{Z}_2, \Delta)$, in particular, $B_1$ is independent of $\mathcal{Z}_2 \circ \Delta = \mathcal{Y}_2$. We thus deduce that 
\[\mathcal{Y}_1(t) = \sqrt{2} B_1(\Delta(t)), \quad \mathcal{Y}_2(t) = \sqrt{2} B_2(t), \qquad t \geq 0,\]
where $B_1$ and $B_2$ are two independent Brownian motions with diffusion coefficient $\sqrt{2}$ and $\Delta(t) = \int_{\mathbb{R}} L^{B_2}(t,x) \, \mathcal{H}(dx)$, $t \geq 0$. The uniqueness of the limit, and thereby the claim, is proved. \end{proof}

\subsubsection{Conclusion for the original model}

Recall that the process $(Y(t))_{t \geq 0}$ is related to the original line model $(X(t))_{t \geq 0}$ via the relation \eqref{eq:timechange_x_to_y}. Thanks to Proposition \ref{thm:conv_y}, we can finally complete the proof of Theorem \ref{thm:conv}.


\begin{proof}[Proof of Theorem \ref{thm:conv}]
From the relation \eqref{eq:timechange_x_to_y}, we have, for all $T>0$, 
\[\int_0^t H(X^T_2(s)) \,ds  = \int_0^{A(t)} \frac{H(Y^T_2(r))}{V(Y^T_1(r))} \, dr, \qquad \hbox{where} \qquad A(t) = \int_0^t V(X^T_1(s)) ds, \quad t \geq 0.\]
In the first equality we performed the change of variable $r = A(s)$ to obtain the right-hand side. It therefrom follows that, setting 
\begin{equation}
\label{eq:def.aht1}
A^{H,T}_1(t) := \frac{1}{T^\delta} \int_0^{Tt} H(X_2^T(s)) \,ds, 
\end{equation}
then $A^{H,T}_1 = D^{V,T} \circ A^{(T)}$, where $D^{V,T}$ was defined in \eqref{eq:def_dvt}, and where, for all $t \geq 0$, $A^{(T)}(t) := \frac{1}{T} A(Tt)$. Now we have proven above that $D^{V,T} \underset{T \to \infty}{\Longrightarrow} \Delta$ in $C$. In addition we have the convergence of processes
\[ A^{(T)}(t) := \frac{1}{T} \int_0^{Tt} V(X^T_1(s)) \, ds \underset{T \to \infty}{\longrightarrow} t\]
a.s. in $C$. Indeed, since $\mathbb{E}[V(0)]=1$, by the ergodicity of the environment seen from the particle, we have, for all fixed $t \geq 0$, the almost-sure convergence
\[A^{(T)}(t) \underset{T \to \infty}{\longrightarrow} t,\]
whence, reasoning as in the proof of Proposition \ref{prop:over_scaling}, the claimed convergence follows.
Hence, by Slutsky's lemma, we have the convergence in distribution
\[ (D^{V,T} ,A^{(T)}) \underset{T \to \infty}{\Longrightarrow} (\Delta, I) \]
in $C \times C$, where $I$ denotes the identity process $I(t)=t$, $t \geq 0$. By Lemma \ref{lem:composition}, we deduce that $D^{V,T} \circ A^{(T)} \underset{T \to \infty}{\Longrightarrow} \Delta$ in $C$, which yields \eqref{eq:conv.clock_proc_delta}. 
We now prove the convergence statement \eqref{eq:conv_x_statement}. We have 
\[X^{(T)}_1(t) = \frac{1}{T^{{\delta/2}}} Y_1 \left(T A^{(T)}(t)\right), \quad  X^{(T)}_2(t) = \frac{1}{\sqrt{T}} Y_2 \left(T A^{(T)}(t)\right), \qquad t \geq 0,\] 
that is $X^{(T)}(t) = Y^{(T)} \circ A^{(T)} (t)$, $t \geq 0$. But, by Proposition \ref{thm:conv_y}, we have
$Y^{(T)} \underset{T \to \infty}{\Longrightarrow} \mathcal{Y}$ in $D$, where $\mathcal{Y}$ is given by \eqref{eq:expression_limit_process}. 
By another application of Slutsky's lemma, we obtain the convergence in distribution
\[ (Y^{(T)} ,A^{(T)}) \underset{T \to \infty}{\longrightarrow} (\mathcal{Y}, I) \]
in $D \times C$, where $I(t) =t$, $t \geq 0$. Since $\mathcal{Y}$ admits continuous paths, by Lemma \ref{lem:composition}, the claim \eqref{eq:conv_x_statement} follows.
\end{proof}

\begin{remark}
As a by-product of the above proofs  we actually have the convergence in law in $D \times D \times C$
\begin{equation}
\label{eq:conv_x_delta}
(X^{(T)}_1, X^{(T)}_2, A^{H,T}_1) \underset{T \to \infty}{\Longrightarrow} (B_1 \circ \Delta, B_2, \Delta). 
\end{equation}
where $A^{H,T}_1$ is as in \eqref{eq:def.aht1}, and where $B_1,B_2, \Delta$ are as in the statement of Theorem \ref{thm:conv}. This enhanced convergence will be useful in the next section.
\end{remark}

\subsubsection{Scaling limit of the Constant Speed Random Walk}

Let us consider now the Constant Speed Random Walk $(\hat{X}(t))_{t \geq 0}$, whose generator is given by
\[\begin{split}
\hat{\mathcal{L}}^\omega f(x) &= \frac{H(x_2)}{V(x_1) + H(x_2)}(f (x + e_1) - 2 f(x) + (f (x - e_1)) \\ 
&+ \frac{V(x_1)}{V(x_1) + H(x_2)}(f (x + e_2) -  2f(x) + f(x-e_2))  
\end{split}\]
where $H(k)$, $V(k)$, $k \in \mathbb{Z}$ satisfy Assumption \ref{assump:H_V_tail_bis}.
Note that $(\hat{X}(t))_{t \geq 0}$ can be represented using $(X(t))_{t \geq 0}$ via a time-change, namely, $\hat{X}(t) \overset{(d)}{=} X(J^{-1}(t))$, where
\begin{equation}
\label{eq:time_change_csrw}
J^{-1}(t) := \inf \{ r \geq 0: J(u) \geq t\}, \qquad  J(r) = \int_0^r (V(X_1(s)) + H(X_2(s))) \, ds.  
\end{equation}
We also define for all $T>0$ the process $\hat{X}^T$, just replacing the variables $H(x_2)$ in the generator above by $H^{(T)}(x_2)$ (recall definition \eqref{eq:def_env_rescaled}). Recalling that $\delta := \frac{1}{2}\left(1 + \frac{1}{\alpha_1}\right)$, we will prove the following result:

\begin{theorem}
\label{thm:limit_csrw}
For $T>0$, let $\hat{X}^{(T)}(t) := (\hat{X}^{(T)}_1(t), \hat{X}^{(T)}_2(t))$, $t \geq 0$, where 
\[ \hat{X}^{(T)}_1(t) := \frac{1}{\sqrt{T}} \hat{X}^T_1(Tt), \quad \hat{X}^{(T)}_2(t) := T^{-\frac{1}{2\delta}} \hat{X}^T_2(Tt), \qquad t \geq 0. \]
Then, for $\mathbb{P}$-a.e. $\omega$, as $T \to \infty$, the process $(\hat{X}^{(T)}(t))_{t \geq 0}$ converges in distribution, in $D \times D$, to  
$\mathcal{X}(t) = (\mathcal{X}_1(t), \mathcal{X}_2(t))_{t \geq 0}$, where 
\begin{equation}
\label{eq:expression_limit_process_csrw}
\mathcal{X}_1(t) = B_1(t), \quad \mathcal{X}_2(t) = B_2( \Delta^{-1}(t)), \qquad t \geq 0,
\end{equation}
where $B_1$ and $B_2$ are two independent Brownian motions with diffusion coefficient $\sqrt{2}$,  $\Delta(t) = \int_{\mathbb{R}} L^{B_2}_t(x) \, \mathcal{H}(dx)$, $t \geq 0$, and $(\Delta^{-1}(t))_{t \geq 0}$ denotes the inverse of $(\Delta(t))_{t \geq 0}$ (note that, by Lemma \ref{lem:delta_increasing}, the paths of $(\Delta(t))_{t \geq 0}$ are strictly increasing).
\end{theorem}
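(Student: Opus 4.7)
I would follow the structure used for the variable-speed walk in Theorem \ref{thm:conv}, but exploiting the alternative decomposition $\hat X = Y\circ Z$ in order to avoid that the diffusive component of $\hat X$ and the environment become intertwined in a bad way. Recall $Y(t)=X(A^{-1}(t))$ with $A(t)=\int_0^t V(X_1(s))\,ds$, so that $Y_2$ is the simple (rate $2$) random walk on $\mathbb{Z}$ independent of the environment (as used in Section 4.2.3). A short computation based on the identity $J^T(r)=A(r)+D^{V,T}(A(r))$ (with $D^{V,T}$ as defined in \eqref{eq:def_dvt}, which is half the predictable quadratic variation of $Y^T_1$) yields the representation
\[
Z^T(t):=A\bigl((J^T)^{-1}(t)\bigr)=(\phi^T)^{-1}(t), \qquad \phi^T(u):=u+D^{V,T}(u),
\]
and hence $\hat{X}^T(t)=Y^T(Z^T(t))$. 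All the work is then to analyze $Y^T$ and $Z^T$ at the sub-diffusive time scale $T^{1/\delta}$.

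\textbf{Step 1 (KS-type limit at scale $T^{1/\delta}$).} The key technical step is to show that, for $\mathbb{P}$-a.e.\ $\omega$,
\[
T^{-1}\,D^{V,T}(T^{1/\delta} u)\ \Longrightarrow\ \Delta(u)\quad \text{in } C,
\]
where $\Delta(u)=\int_{\mathbb R}L^{B_2}(u,x)\,\mathcal{H}(dx)$, with $B_2$ the Brownian motion arising as the Donsker limit of $T^{-1/(2\delta)}Y^T_2(T^{1/\delta}\cdot)$. This is the natural analogue of Lemma \ref{lem:conv_constant_case}, proved via the local-time representation $D^{V,T}(T^{1/\delta}u)\simeq \sum_k H^{(T)}(k)\,\ell^{Y^T_2,T^{1/\delta}u}(k)$ (legitimized by the ratio argument of Lemma \ref{lem:cv_ratio_dv_d}), combined with the convergence of the rescaled SRW local times (Lemma \ref{lem:conv_lt_srw}) and a vague convergence of the rescaled environmental measure $\nu^T:=T^{1/(2\delta)-1}\sum_k H^{(T)}(k)\,\delta_{k/T^{1/(2\delta)}}$. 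Using \eqref{eq:def_env_rescaled} and the scaling $1/(2\alpha_1)+1/(2\delta)-1+(1-\delta)/(2\delta\alpha_1)=0$, one checks that $\nu^T$ has total mass of order $1$ and yields the Kesten--Spitzer process $\Delta$ in the limit.

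\textbf{Step 2 (Rescaled clock).} Since $\delta>1$, $T^{-1}\phi^T(T^{1/\delta}u)=T^{1/\delta-1}u+T^{-1}D^{V,T}(T^{1/\delta}u)\to \Delta(u)$ in $C$. Continuity of inversion (which applies because $\Delta$ is strictly increasing, Lemma \ref{lem:delta_increasing}) then gives
\[
T^{-1/\delta}Z^T(T\,\cdot\,)=T^{-1/\delta}(\phi^T)^{-1}(T\,\cdot\,)\ \Longrightarrow\ \Delta^{-1}(\cdot)\quad \text{in } C.
\]

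\textbf{Step 3 (Scaling limit of $Y^T$ at scale $T^{1/\delta}$).} $Y^T_2$ is an environment-independent simple random walk, so Donsker gives $T^{-1/(2\delta)}Y^T_2(T^{1/\delta}\,\cdot\,)\Rightarrow B_2$. The process $T^{-1/2}Y^T_1(T^{1/\delta}\,\cdot\,)$ is a martingale of quadratic variation $2\,T^{-1}D^{V,T}(T^{1/\delta}\cdot)\to 2\Delta$ by Step 1. Tightness, uniform integrability (as in Lemma \ref{lem:ui}, transposed to this time scale) and the martingale characterization / Girsanov argument of Proposition \ref{thm:conv_y} then jointly yield
\[
\bigl(T^{-1/2}Y^T_1(T^{1/\delta}\,\cdot\,),\,T^{-1/(2\delta)}Y^T_2(T^{1/\delta}\,\cdot\,)\bigr)\ \Longrightarrow\ (B_1\circ\Delta,\,B_2),
\]
with $B_1$ a Brownian motion of diffusion $\sqrt{2}$ independent of $B_2$.

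\textbf{Step 4 (Composition).} Writing $\hat X^T(Tt)=Y^T(Z^T(Tt))=Y^T\bigl(T^{1/\delta}\cdot T^{-1/\delta}Z^T(Tt)\bigr)$, the joint convergences of Steps 2 and 3 together with Lemma \ref{lem:composition} give
\[
\hat X^{(T)}_1(t)\Longrightarrow B_1\bigl(\Delta(\Delta^{-1}(t))\bigr)=B_1(t),\qquad \hat X^{(T)}_2(t)\Longrightarrow B_2\bigl(\Delta^{-1}(t)\bigr),
\]
for $\mathbb{P}$-a.e.\ $\omega$, which is the claim.

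The main obstacle is Step 1: the Kesten--Spitzer limit at the non-standard time scale $T^{1/\delta}$, which requires revisiting the local-times/measure analysis of Lemma \ref{lem:conv_constant_case} at the finer spatial resolution $T^{1/(2\delta)}$ and checking, via the explicit form $H^{(T)}(k)=T^{1/(2\alpha_1)}(\mathcal{H}((k+1)/\sqrt T)-\mathcal{H}(k/\sqrt T))$, that the environment measure $\nu^T$ still produces $\mathcal{H}(dx)$ in the limit --- delicate because the walk explores $\mathcal{H}$ only in a shrinking neighborhood of the origin.
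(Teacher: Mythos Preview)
Your decomposition $\hat X^T = Y^T \circ Z^T$ is correct (modulo a normalization slip: the identity $J^T(r)=A(r)+D^{V,T}(A(r))$ only holds for the \emph{unnormalized} clock $\int_0^{\cdot}\frac{H^{(T)}(Y^T_2)}{V(Y^T_1)}$, not for the $D^{V,T}$ of \eqref{eq:def_dvt}). But the paper's proof is both shorter and avoids the real difficulty you run into.

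\medskip
\textbf{What the paper does.} It works with $X$, not $Y$: writing $\hat X^{(T)}\overset{(d)}{=} X^{(M)}\circ (J^{(M)})^{-1}$ with $M=T^{1/\delta}$, it observes that
\[
J^{(M)}(t)=A^{H,M}_1(t)+M^{-\delta}\!\int_0^{Mt}\!V(X^M_1)\,ds,
\]
where the second term vanishes because $\delta>1$. The enhanced convergence \eqref{eq:conv_x_delta} from Theorem~\ref{thm:conv} already gives $(X^{(M)}_1,X^{(M)}_2,A^{H,M}_1)\Rightarrow(B_1\!\circ\!\Delta,B_2,\Delta)$; Slutsky, Lemma~\ref{lem:conv.inv}, and Lemma~\ref{lem:composition} then immediately yield $\hat X^{(T)}\Rightarrow(B_1,B_2\!\circ\!\Delta^{-1})$. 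No new Kesten--Spitzer analysis is needed: everything is recycled from the variable-speed result.

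\medskip
\textbf{Where your Step 1 breaks.} You need $T^{-1}\!\int_0^{T^{1/\delta}u}H^{(T)}(Y_2)\,ds\Rightarrow\Delta(u)$ \emph{quenched}, i.e.\ for a fixed realization of $\mathcal H$. But the environment $H^{(T)}$ was rescaled so that the walk at time scale $T$ (spatial scale $\sqrt T$) sees $\mathcal H$ macroscopically: $H^{(T)}(k)=T^{1/(2\alpha_1)}\bigl(\mathcal H((k{+}1)/\sqrt T)-\mathcal H(k/\sqrt T)\bigr)$. Your walk $Y_2$ at time $T^{1/\delta}$ only reaches spatial scale $T^{1/(2\delta)}$, hence in the $\mathcal H$-coordinates it explores the window $[-cT^{1/(2\delta)-1/2},\,cT^{1/(2\delta)-1/2}]$, which shrinks to $\{0\}$ since $\delta>1$. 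Your measure $\nu^T$ therefore probes only $\mathcal H$ in an infinitesimal neighborhood of the origin. Stable subordinators are self-similar \emph{in law} but not pathwise, so for a fixed $\mathcal H$ the measures $\nu^T$ do \emph{not} converge vaguely to $\mathcal H(dx)$; at best you recover an independent copy of the stable subordinator, which would give an annealed statement, not the quenched one of Theorem~\ref{thm:limit_csrw}. You correctly sensed this (``the walk explores $\mathcal H$ only in a shrinking neighborhood of the origin''), but it is not merely delicate --- it is a genuine obstruction to your route. The paper sidesteps it entirely by passing to the parameter $M=T^{1/\delta}$ and using $X^{(M)}$ with environment $H^{(M)}$, which is already matched to the correct spatial scale.
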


\begin{remark}
The process $B_2( \Delta^{-1}(t))$, $t \geq 0$, is the FIN process considered in \cite{FIN}. 
\end{remark}

\begin{proof}
We have
\begin{equation}
\label{eq:rep_z}
\hat{X}^{(T)} \overset{(d)}{=} X^{(T^{1/\delta})} \circ {J^{(T^{1/\delta})}}^{-1},  
\end{equation}
where, recalling definition \eqref{eq:time_change_csrw}, we define for all $M>0$
\[J^{(M)}(t) := \frac{1}{M^\delta} \int_0^{M t} (V(X_1^T(s)) + H^{(T)}(X_2^T(s))) \, ds, \qquad t \geq 0, \]
and ${J^{(M)}}^{-1}$ is the inverse of $J^{(M)}$, given by
\[{J^{(M)}}^{-1}(t) =  \frac{1}{M} J^{-1}(M^\delta t), \qquad t \geq 0. \]
Now note that 
\[J^{(M)}(t) = A^{H,M}_1(t) + \frac{1}{M^{\delta}} \int_0^{Mt} V(X_1^T(s)) \, ds, \qquad t \geq 0 \]
where $A^{H,M}_1(t)$ was defined in \eqref{eq:def.aht1}. By \eqref{eq:conv.clock_proc_delta}, $A^{H,M}_1(t) \underset{T \to \infty}{\Longrightarrow} \Delta$ in $D$; moreover, since $\delta>1$, there holds the convergence of processes, in $C$,  
\[\frac{1}{M^{\delta}} \int_0^{Mt} V(X^T_1(s)) \, ds \underset{M \to \infty}{\longrightarrow} 0 \qquad \hbox{a.s.} \]
By \eqref{eq:conv_x_delta} and by Slutsky's Lemma, we deduce the convergence in law in $D \times D  \times C$
\[(X^{(T^{1/\delta})}_1, X^{(T^{1/\delta})}_2, J^{(T^{1/\delta})}) \underset{T \to \infty}{\Longrightarrow} (B_1 \circ \Delta, B_2, \Delta). \]
By the analytical Lemma \ref{lem:conv.inv}, and the fact that, a.s., the process $(\Delta(t))_{t \geq 0}$ is strictly increasing, we obtain in turn the convergence in law 
\[(X^{(T^{1/\delta})}_1, X^{(T^{1/\delta})}_2, {J^{(T^{1/\delta})}}^{-1}) \underset{T \to \infty}{\Longrightarrow} (B_1 \circ \Delta, B_2, \Delta^{-1}), \]
again in the space $D \times D \times C$. Finally, by \eqref{eq:rep_z} and using the remark following Lemma \ref{lem:composition}, we obtain
the convergence in law in $D \times D$
\[ \hat{X}^{(T)}  \underset{T \to \infty}{\Longrightarrow} ((B_1 \circ \Delta) \circ \Delta^{-1} , B_2 \circ \Delta^{-1}), \]
which yields the claimed result.
\end{proof}

\section{Appendix}

\subsection{Definitions, and useful properties of stable processes}

We recall the following definitions which can be found, e.g., in Section 1 of \cite{Bertoin1999}.

\begin{definition}
A subordinator $(Z_t)_{t \geq 0}$ is a right-continous, non-decreasing process, with stationary, independent increments. For such a process there exists $\Phi: \mathbb{R}_+ \to \mathbb{R}_+$, called the Laplace exponent of $Z$, such that
\[\forall \lambda \geq 0, \quad \forall t \geq 0, \qquad \mathbb{E}[e^{-\lambda Z_t}] = e^{-t \Phi(\lambda)}. \]
\end{definition}

\begin{definition}
Let $\alpha \in (0,1)$. A stable subordinator $(Z_t)_{t \geq 0}$ of index $\alpha$ is a subordinator with Laplace exponent
$$\Phi(\lambda) = \lambda^\alpha, \qquad \lambda \geq 0. $$
We call double-sided stable subordinator of index $\alpha$ the process $(Z(x))_{x \in \mathbb{R}}$ where 
\[Z(x) = \begin{cases}
Z_+(x), \quad &x \geq 0 \\
Z_{-}(-x), \quad &x < 0
\end{cases}\]
where $Z_+$ and $Z_{-}$ are two independent stable subordinators of index $\alpha$.
\end{definition}

Finally we state a useful fact about Kesten-Spitzer processes. Let $\alpha \in (0,1)$,
and $(Z(x))_{x \in \mathbb{R}}$ be a double-sided subordinator of index $\alpha$, let 
$(B(t))_{t \geq 0}$ be a Brownian motion independent of $Z$, and consider 
the Kesten-Spitzer process $(\Delta(t))_{t \geq 0}$ given by
\begin{equation}
\label{eq:exp_ks_process_appendix}
\Delta(t) := \int L^B(t,x) \, \mathcal{H}(dx), \qquad t \geq 0.
\end{equation}

\begin{lemma}
\label{lem:delta_increasing}
Almost-surely, $(\Delta(t))_{t \geq 0}$ is strictly increasing on $\mathbb{R}_+$, $\Delta(t) \underset{t \to \infty}{\longrightarrow} \infty$, and $\Delta^{-1}(s) \underset{s \to \infty}{\longrightarrow} \infty$.
\end{lemma}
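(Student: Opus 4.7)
For \emph{strict monotonicity}, I would exploit that $\mathcal{H}$ is pure-jump with L\'evy measure $c\,x^{-1-\alpha}dx$ on each half-line (infinite near $0$), so its jump locations are a.s. dense in $\mathbb{R}$ and $\mathcal{H}(dx)$ reduces to the sum of Diracs at those locations weighted by the jump sizes. Hence for any $0\le s<t$,
$$\Delta(t)-\Delta(s)=\sum_{x\,:\,\Delta\mathcal{H}(x)>0}\bigl(L^B(t,x)-L^B(s,x)\bigr)\,\Delta\mathcal{H}(x).$$
Conditioning on $\mathcal{H}$, which is independent of $B$, the strong Markov property shows that $L^B(t,\cdot)-L^B(s,\cdot)$ is the local time at time $t-s$ of a Brownian motion starting from $B(s)$, hence is strictly positive on the open interval $(\min_{[s,t]}B,\max_{[s,t]}B)$ by a standard argument using the occupation-time formula together with the continuity of $x\mapsto L^B(t,x)$. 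This interval is a.s. non-degenerate, and by the density of the jump locations of $\mathcal{H}$ it contains at least one $x_0$ with $\Delta\mathcal{H}(x_0)>0$, so that the corresponding summand is strictly positive. I would run this argument simultaneously for all rational pairs $s<t$, taking a countable intersection of full-measure events, and then extend to real $s<t$ by monotonicity of $\Delta$.

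For $\Delta(t)\to\infty$, I would fix any jump location $x_0$ of $\mathcal{H}$ (which exists a.s.) and use
$$\Delta(t)\ge L^B(t,x_0)\,\Delta\mathcal{H}(x_0);$$
then $L^B(t,x_0)\to\infty$ a.s. as $t\to\infty$ by the point-recurrence of one-dimensional Brownian motion, and $\Delta\mathcal{H}(x_0)>0$, giving the conclusion. The inversion claim then follows at once: for every $M>0$ one has $\Delta(M)<\infty$ a.s., so by monotonicity and the definition of the generalized inverse, $\Delta^{-1}(s)\ge M$ whenever $s>\Delta(M)$, and $M$ is arbitrary.

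The main obstacle is in the first step: one has to combine the independence of $B$ and $\mathcal{H}$ with the existence of a jump of $\mathcal{H}$ inside the random interior of the range of $B|_{[s,t]}$, and one needs to argue this on a single full-measure event that works for all $s<t$. Once the positivity of Brownian local time at interior points of its range is granted---this follows from the occupation-time formula $\int_a^b L^B(t,x)\,dx=\int_0^t\mathbf{1}_{[a,b]}(B(u))\,du$ together with continuity in $x$---the rest is routine.
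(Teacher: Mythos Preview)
Your proof is correct and takes a genuinely different route from the paper. The paper exploits the stationary increments and self-similarity of the Kesten--Spitzer process: it reduces strict monotonicity to $\Delta(1)>0$ a.s., then (via the scaling $\lambda\,\Delta(1)\overset{d}{=}\Delta(\lambda^{1/\delta})$) to $\Delta(t)\to\infty$, which follows from $L^{B}(t,x)\to\infty$ by monotone convergence. You instead use the pure-jump structure of the stable subordinator (infinite L\'evy measure, hence a.s.\ dense jump set) together with the positivity of Brownian local time on the interior of the range over $[s,t]$; this forces each increment $\Delta(t)-\Delta(s)$ to be strictly positive. The paper's argument is shorter and uses no fine pathwise facts about $B$ or $\mathcal H$ beyond recurrence; yours is more hands-on and would go through unchanged for any driftless subordinator with infinite L\'evy measure in place of $\mathcal H$, not just the stable one.

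One point deserves tightening: you justify ``$L^{B}(t,\cdot)-L^{B}(s,\cdot)>0$ on the open interior of the range'' by the occupation-time formula together with continuity in $x$. Those two ingredients alone only show that the zero set of the local-time increment has empty interior inside the range; they do not rule out isolated zeros, which is exactly what you must exclude, since the jump locations of $\mathcal H$ form merely a countable (though dense) set. The statement you need is true, but it requires a further input---for instance the Ray--Knight description of $x\mapsto L^{B}(T_a,x)$ as a squared Bessel process (a.s.\ strictly positive on the relevant open interval), or the regularity of each level for Brownian motion applied via the strong Markov property at its hitting time. Since you only invoke this at countably many rational pairs $(s,t)$, the fixed-time version of this standard fact suffices, and the overall strategy is sound.
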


\begin{proof}
Since, for all $x \in \mathbb{R}$, $t \mapsto L^B(t,x)$ is non-decreasing, $t \mapsto \Delta(t)$ is non-decreasing. To prove that it is a.s. strictly increasing, it suffices to show that, for all $s<t$, $\Delta(s)  < \Delta(t)$ a.s.: indeed this will imply that, a.s., $(\Delta(t))_{t \geq 0}$ is strictly increasing on $\mathbb{Q}_+$, which, by density of the latter in $\mathbb{R}_+$, will yield the claim. Since the increments of $(\Delta(t))_{t \geq 0}$ are stationary (see \cite{kesten1979limit}), and by scale-invariance of this process, it suffices in turn to show that $\Delta(1) >0$ a.s. By scale-invariance this is equivalent to checking that $\Delta(t) \underset{t \to \infty}{\longrightarrow} + \infty$ a.s. Indeed, we have
\[\mathbb{P}(\Delta(1)=0) = \lim_{\lambda \to + \infty} \mathbb{E}[e^{-\lambda \Delta(1)}] =   
\lim_{\lambda \to +\infty} \mathbb{E}[e^{-\Delta(\lambda^{1/\delta})}],\]
where $\delta = \frac{1}{2}\left(1 + \frac{1}{\alpha}\right)$ is the Kesten-Spitzer scaling exponent. But since the local times of the Brownian motion
$B$ satisfy $L^B(t,x) \underset{t \to + \infty}{\longrightarrow} + \infty$ a.s. for all $x \in \mathbb{R}$, the monotone convergence theorem applied
to the integral expression $\eqref{eq:exp_ks_process_appendix}$ yields $\Delta(t) \underset{t \to \infty}{\longrightarrow} + \infty$ a.s. This 
yields at once the first two statements of the claim. The last statement follows immediately from the definition of $\Delta^{-1}$ as the inverse of a strictly increasing function on $\mathbb{R}_+$.
\end{proof}

\subsection{A useful estimate for maxima of heavy-tailed random variables}

We recall the following result which follows from estimates in \cite{Andres:2016aa}, see equation (A.1) therein.

\begin{lemma}
\label{lem:estimate_max_heavy_rv}
Let $\alpha \in (0,1)$ and $Z_k$, $k \in \mathbb{Z}$, be a collection of i.i.d. random variables defined on the same probability space $(\Omega, \mathcal{F}, \mathbb{P})$ with tail behavior  $\mathbb{P}(Z_k > t) = t^{-\alpha + o(1)}$ as $t \to \infty$. Let $\alpha^{-} \in (0, \alpha) $. Then, for $\mathbb{P}$-a.e. $\omega$, there exists $C(\omega)>0$, such that, for all $k \geq 1$,
\[\max_{-k \leq i \leq k} Z_i \leq C(\omega) (1+ k^{\frac{1}{\alpha^{-}}}). \] 
\end{lemma}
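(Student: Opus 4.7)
The plan is to reduce the almost-sure bound to a Borel--Cantelli argument along a dyadic subsequence $k = 2^n$, since a direct union bound over all $k$ may fail to be summable when $\alpha/\alpha^-$ is close to $1$.

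First, I would exploit the tail assumption by fixing an intermediate exponent $\alpha' \in (\alpha^-, \alpha)$. The hypothesis $\mathbb{P}(Z_0 > t) = t^{-\alpha + o(1)}$ ensures that for some deterministic $t_0$,
\[ \mathbb{P}(Z_0 > t) \leq t^{-\alpha'}, \qquad \forall t \geq t_0. \]
Then, setting $A_n := \{\max_{-2^n \leq i \leq 2^n} Z_i > 2^{n/\alpha^-}\}$, a union bound over the $2^{n+1}+1$ i.i.d.\ variables yields, for all sufficiently large $n$,
\[ \mathbb{P}(A_n) \leq (2^{n+1}+1) \cdot 2^{-n\alpha'/\alpha^-}. \]
Because $\alpha' > \alpha^-$, the exponent $1 - \alpha'/\alpha^-$ is strictly negative, so $\sum_n \mathbb{P}(A_n) < \infty$, and the Borel--Cantelli lemma gives a $\mathbb{P}$-null set outside of which only finitely many $A_n$ occur. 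Hence, for $\mathbb{P}$-a.e.\ $\omega$, there exists $n_0(\omega)$ such that
\[ \max_{-2^n \leq i \leq 2^n} Z_i(\omega) \leq 2^{n/\alpha^-}, \qquad \forall n \geq n_0(\omega). \]

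To pass from the subsequence $2^n$ to arbitrary $k \geq 1$, for each $k \geq 2^{n_0(\omega)}$ I would pick the unique $n \geq n_0(\omega)$ with $2^{n-1} \leq k < 2^n$. Monotonicity of the running maximum gives
\[ \max_{-k \leq i \leq k} Z_i \leq \max_{-2^n \leq i \leq 2^n} Z_i \leq 2^{n/\alpha^-} = 2^{1/\alpha^-} \cdot 2^{(n-1)/\alpha^-} \leq 2^{1/\alpha^-} \, k^{1/\alpha^-}. \]
For the finitely many remaining small values $k < 2^{n_0(\omega)}$, the maximum is bounded by the $\omega$-dependent constant $\max_{|i| \leq 2^{n_0(\omega)}} Z_i(\omega) < \infty$. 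Absorbing both contributions into a single random constant $C(\omega)$ yields the claimed bound $\max_{-k \leq i \leq k} Z_i \leq C(\omega)(1 + k^{1/\alpha^-})$ valid for all $k \geq 1$.

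The only genuine subtlety here is the choice of the dyadic subsequence: this is what makes the Borel--Cantelli sum converge for every $\alpha^- < \alpha$, and not only in the regime $\alpha/\alpha^- > 2$ that a naive pointwise union bound would cover. Everything else is a routine interpolation between consecutive dyadic scales.
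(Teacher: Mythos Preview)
Your proof is correct. The Borel--Cantelli argument along the dyadic subsequence is clean, and the interpolation back to arbitrary $k$ is handled correctly.

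The paper takes a different route: it simply invokes an external estimate (equation (A.1) in \cite{Andres:2016aa}), which gives the sharper almost-sure bound $\max_{|i|\leq k} Z_i \leq (k(\log k)^{1.5})^{1/\alpha}$ for all large $k$, and then observes that this is eventually dominated by $1+k^{1/\alpha^-}$. So the paper's argument is shorter but not self-contained, while yours is elementary and complete. One minor remark: the dyadic subsequence is sufficient but not strictly necessary --- an even more direct route is to apply Borel--Cantelli to the events $\{Z_i > |i|^{1/\alpha^-}\}$ for individual $i$, whose probabilities decay like $|i|^{-\alpha'/\alpha^-}$ and are therefore summable as soon as $\alpha'>\alpha^-$; this yields $Z_i \leq |i|^{1/\alpha^-}$ for all but finitely many $i$, from which the maximum bound follows immediately. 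Your dyadic argument and this pointwise one are equally valid; the paper's cited result trades self-containment for a slightly finer exponent.
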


\begin{proof}
By equation (A.1) in \cite{Andres:2016aa}, where we arbitrarily choose $\delta = 0.5$, for $\mathbb{P}$-a.e. $\omega$, there exists $L_0 = L_0(\omega)$ such that, for all $k \geq L_0$,
\[\max_{-k \leq i \leq k} Z_i \leq (k (\log k)^{1,5})^{1/\alpha}. \]
But the right-hand side is bounded for $k$ sufficiently large by $1 + k^{\frac{1}{\alpha^{-}}}$, and the claim follows.
\end{proof}

\subsection{Estimates on solutions of a system of coupled non-linear differential inequalities}  
\label{app:proof_estimates_system}

Here we state and prove Lemma \ref{lem:bound_sol_diff_ineq} which was used in the proof of Lemma \ref{lem:Xstar_is_conservative} to bound moments of the time-changed random walk

\begin{lemma}
\label{lem:bound_sol_diff_ineq}
Let $A_i :=  \frac{1+ \varepsilon_i^+}{1- \varepsilon_1^+ \varepsilon_2^+}$, $i=1,2$. Let $C,\kappa >0 $, and let $h,v : \mathbb{R}_+ \to \mathbb{R}_+$ be two continuous functions.
\begin{enumerate}
\item If $h$ and $v$ satisfy the system of differential inequalities
\begin{equation}
\label{eq:system_diff_ineq}
\forall t \in  [0,T], \quad \begin{cases}
h(t) \leq C(\kappa + A_1  \int_0^{t}  v(s)^{\varepsilon_1^+} \, ds) \\
v(t) \leq C(\kappa +  A_2 \int_0^{t}  h(s)^{\varepsilon_2^+} \, ds), 
\end{cases}
\end{equation}
then, for all $t \in [0,T]$,
$$h(t) \leq [(C\kappa)^{1/A_1}+Ct]^{A_1}, \qquad v(t) \leq [(C\kappa)^{1/A_2}+Ct]^{A_2}. $$ 

\item If $h$ and $v$ satisfy the system of differential inequalities
\begin{equation}
\label{eq:system_diff_ineq_bis}
\forall t \in  [0,T], \quad \begin{cases}
h(t) \leq C(\kappa + A_1  \int_0^{t}  v(s)^{\varepsilon_1^+ \varepsilon_2^+} \, ds) \\
v(t) \leq C(\kappa + \frac{A_2}{\varepsilon_2^+} \int_0^{t}  h(s)^{\varepsilon_2^+} v(s)^{1-\varepsilon_2^+} \, ds),
\end{cases}
\end{equation}
then, for all $t \in [0,T]$,
$$h(t) \leq [(C\kappa)^{1/A_1} + C t]^{A_1}, \qquad v(t) \leq [(C\kappa)^{1/B_2} +Ct]^{B_2}, $$
where $B_2=\frac{A_2}{{\varepsilon_2}^+}$.
\end{enumerate} 
\end{lemma}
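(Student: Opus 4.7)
The plan is to exploit two algebraic identities that are immediate consequences of the definition $A_i = (1+\varepsilon_i^+)/(1-\varepsilon_1^+\varepsilon_2^+)$, namely
\[ A_1 \;=\; 1 + A_2\,\varepsilon_1^+, \qquad A_2 \;=\; 1 + A_1\,\varepsilon_2^+. \]
These are precisely what makes the polynomial ansatz $\phi_1(t):=[\alpha+Ct]^{A_1}$, $\phi_2(t):=[\beta+Ct]^{A_2}$ self-consistent under the integral operator on the right-hand side of \eqref{eq:system_diff_ineq}. Indeed, the exponent identity $A_2\varepsilon_1^+ + 1 = A_1$ gives $CA_1\int_0^t \phi_2(s)^{\varepsilon_1^+}\d s = [\beta+Ct]^{A_1}-\beta^{A_1}$, and symmetrically $A_1\varepsilon_2^+ + 1 = A_2$ gives $CA_2\int_0^t \phi_1(s)^{\varepsilon_2^+}\d s = [\alpha+Ct]^{A_2}-\alpha^{A_2}$. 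Choosing $\alpha^{A_1}=C\kappa$ and $\beta^{A_2}=C\kappa$ (i.e.\ $\alpha=(C\kappa)^{1/A_1}$, $\beta=(C\kappa)^{1/A_2}$) turns each integral inequality into an equality against the ansatz plus the initial term $C\kappa$.

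From here the argument is a bootstrap/continuity argument. One sets $T^\ast := \sup\{\tau\in[0,T]:h(s)\leq\phi_1(s)\text{ and }v(s)\leq\phi_2(s)\ \forall s\leq\tau\}$, which is positive because $\phi_i(0)=C\kappa\geq h(0),v(0)$, and derives a contradiction if $T^\ast<T$: on $[0,T^\ast]$ one may plug the bounds $v\leq\phi_2$ and $h\leq\phi_1$ into \eqref{eq:system_diff_ineq} and apply the displayed equalities above. The main obstacle, and the step I expect to require the most care, is that comparing $\phi_1$ to $C\kappa + CA_1\int_0^t\phi_2^{\varepsilon_1^+}\d s$ is only tight, not strict: the two sides differ by $[\alpha+Ct]^{A_1}-[\beta+Ct]^{A_1}+\beta^{A_1}-\alpha^{A_1}$, whose sign depends on whether $\alpha\gtrless\beta$ (equivalently whether $C\kappa\lessgtr 1$). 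To get a uniformly strict super-solution, I would replace the two offsets by the common value $\alpha_0 := \max((C\kappa)^{1/A_1},(C\kappa)^{1/A_2})+\eta$ for some $\eta>0$, run the bootstrap against $\widetilde\phi_i(t):=[\alpha_0+Ct]^{A_i}$ (for which the residuals are strictly positive by monotonicity in the offset), and then let $\eta\downarrow 0$. Since $[\alpha_0+Ct]^{A_i}\leq 2^{A_i}([(C\kappa)^{1/A_i}+Ct]^{A_i}+\text{lower order})$, this weakening is harmless: in the application ($\kappa=T$, $t\leq T$), the bound collapses anyway to $C(\omega)(T+T^{A_i})$.

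For part 2, the same ansatz $\phi_1(t)=[\alpha+Ct]^{A_1}$, $\phi_2(t)=[\beta+Ct]^{B_2}$ with $B_2:=A_2/\varepsilon_2^+$ goes through using the refined identities $B_2\,\varepsilon_1^+\varepsilon_2^+ = A_2\varepsilon_1^+ = A_1-1$ and $A_1\varepsilon_2^+ + B_2(1-\varepsilon_2^+) = (A_2-1)+(B_2-A_2) = B_2-1$. The first makes $\int_0^t\phi_2^{\varepsilon_1^+\varepsilon_2^+}\d s$ collapse to an $A_1$-th power (exactly as in part 1), while the second makes the mixed integrand $\phi_1^{\varepsilon_2^+}\phi_2^{1-\varepsilon_2^+}$, taken with the common offset $\alpha=\beta$, integrate back to a $B_2$-th power. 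The bootstrap is then carried out identically to part 1, again with a common offset $\alpha_0=\max((C\kappa)^{1/A_1},(C\kappa)^{1/B_2})+\eta$ and a limit $\eta\downarrow 0$ to recover the announced form of the bound.
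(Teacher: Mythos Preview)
Your approach is essentially identical to the paper's: exhibit an explicit polynomial super-solution pair, run a continuity/bootstrap argument to compare $h,v$ against it, and pass to the limit in an auxiliary parameter to recover the stated constants. The paper perturbs $C$ to $C^{+}>C$ and lets $C^{+}\downarrow C$, while you perturb the offset by $\eta$ and let $\eta\downarrow 0$; structurally these are the same maneuver.

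You are in fact more careful than the paper on the one delicate point. The paper asserts that the pair $x(t)=[(C^{+}\kappa)^{1/A_1}+C^{+}t]^{A_1}$, $y(t)=[(C^{+}\kappa)^{1/A_2}+C^{+}t]^{A_2}$ \emph{solves} the system with constant $C^{+}$ exactly; but since $x'(t)=A_1C^{+}\bigl[(C^{+}\kappa)^{1/A_1}+C^{+}t\bigr]^{A_1-1}$ while $C^{+}A_1\,y(t)^{\varepsilon_1^{+}}=A_1C^{+}\bigl[(C^{+}\kappa)^{1/A_2}+C^{+}t\bigr]^{A_1-1}$, this holds only when $(C^{+}\kappa)^{1/A_1}=(C^{+}\kappa)^{1/A_2}$, i.e.\ $A_1=A_2$ or $C^{+}\kappa=1$. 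In the generic case one of the two equalities fails, for precisely the offset-mismatch reason you isolate: with the function $g(u)=[u+Ct]^{A_1}-u^{A_1}$ strictly increasing, the sign of the residual flips depending on whether $\alpha\gtrless\beta$. Your common-offset remedy $\alpha_0=\max\bigl((C\kappa)^{1/A_1},(C\kappa)^{1/A_2}\bigr)+\eta$ makes both $\widetilde\phi_i$ genuine strict super-solutions simultaneously and repairs the argument. The resulting bound $h(t)\le[\alpha_0+Ct]^{A_1}$ is nominally weaker than the lemma's stated form, but, as you observe, it still gives $h(t)\le C(\omega)(T+T^{A_1})$ for $t\in[0,T]$, which is all that is used downstream. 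The same comment applies verbatim to part~2.
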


\begin{proof}
We start by proving the first assertion. Let $C^{+}>C$, and let 
\[x(t) = [(C^{+} \kappa)^{1/A_1}+ C^{+}t]^{A_1}, \quad y(t) = [ (C^{+} \kappa)^{1/A_2} + C^{+}t]^{A_2}, \qquad t \geq 0.\]
Note that $x$ and $y$ solve the system 
\begin{equation}
\label{eq:system_diff_eq_xy}
\forall t \in  [0,T], \quad \begin{cases}
x(t) = C^{+} (\kappa + A_1  \int_0^{t}  (y(s))^{\varepsilon_1^+} \, ds) \\
y(t) = C^{+} (\kappa +  A_2 \int_0^{t}  (x(s))^{\varepsilon_2^+} \, ds). 
\end{cases}
\end{equation}
Let further 
\[ I :=  \{ t \in [0,T] : \forall s \in [0,t], \quad  h(s) < x(s) \quad \hbox{and} \quad v(s) < y(s)\}. \]
By continuity of $h,v,x$ and $y$,  $I$ is an open interval. Furthermore we have $0 \in I$. Let us assume by contradiction
that $\sup I  < T$, and let us denote by $t$ this supremum. Then, by continuity of  $h,v,x$ and $y$, we either have $h(t) = x(t)$ or $v(t)=y(t)$, let us assume WLOG that $h(t)=x(t)$. We necessarily have $t>0$, but by assumption
$$h(t) \leq C(\kappa + A_1  \int_0^{t}  (v(s))^{\varepsilon_1^+} \, ds). $$
Since, by definition of $t$, we have $v(s)  < y(s)$ for all $s < t$, we obtain 
$$h(t) < C(\kappa + A_1  \int_0^{t}  (y(s))^{\varepsilon_1^+} \, ds) < C^{+}(\kappa + A_1  \int_0^{t}  (y(s))^{\varepsilon_1^+} \, ds) = x(t) $$
so $h(t) < x(t)$ which is a contradiction.  Hence $\sup I = T$, whence we deduce that 
$$\forall t \in [0,T], \quad h(t) <  [(C^{+}\kappa)^{1/A_1}+ C^{+}t]^{A_1} \quad \hbox{and} \quad v(s) <  [(C^{+} \kappa)^{1/A_2}+C^{+}t]^{A_2}, $$
which, since $C^{+}>0$ can be chosen arbitrarily close to $C$, yields the first claim. 
For the second statement, we proceed in exactly the same way as for the first one, noting now that, for all $\epsilon>0$, the functions 
\[x(t) = [(C^{+} \kappa)^{1/A_1}+ C^{+} t)]^{A_1}, \quad y(t) = [(C^{+} \kappa)^{1/B_2}+C^{+}t]^{B_2}, \qquad t \geq 0,\]
with $B_2=\frac{A_2}{{\varepsilon_2}^+}$, will solve the system 
\begin{equation}
\label{eq:system_diff_eq_xy_bis}
\forall t \in  [0,T], \quad \begin{cases}
x(t) = C^{+}(\kappa + A_1  \int_0^{t}  y(s)^{\varepsilon_1^+ \varepsilon_2^+} \, ds) \\
y(t) = C^{+}(\kappa +  B_2 \int_0^{t}  x(s)^{\varepsilon_2^+}  y(s)^{1 - \varepsilon_2^+}\, ds). 
\end{cases}
\end{equation}

\end{proof}

\subsection{Convergence of local times of a continuous-time simple random walk} 
\label{subsec:appendix_local_times}

The following result is classical and proven in various contexts, see e.g. Proposition 1 in \cite{revesz2006local} for a very strong convergence result in the case of a discrete time random walk, as well as Theorem 1.3 of \cite{croydon2017time-changes} valid in a more general context. In order to use such a convergence result in the formulation tailored to our specific needs, we will give a self-contained proof below, which will also provide moment estimates that will be useful to us.

\begin{lemma}
\label{lem:conv_lt_srw}
    Let $(\xi(t))_{t \geq 0}$ be a one-dimensional, simple random walk with generator given by
    \begin{equation}
\label{generator_srw} 
\mathcal{L} f(x) = f(x+1) - 2 f(x) + f(x-1), \qquad x \in \mathbb{Z}, 
\end{equation}
for all $f: \mathbb{Z} \to \mathbb{R}$. Let, for all $T>0$,
    \[\ell^{\xi,T}_t(x) := \frac{1}{\sqrt{T}} \int_0^{Tt} \mathbf{1}_{\{\xi(s) = \lfloor \sqrt{T} x \rfloor \}} \, ds, \qquad  t \geq 0, \quad x \in \mathbb{R},\]
    and let $(\tilde{\ell}^{T}_t(x))_{x \in \mathbb{R}, t \geq 0}$ be the continuous version, obtained by a linear interpolation in space
    \[\tilde{\ell}^{\xi, T}_t(x) = \left(\lceil \sqrt{T} x \rceil - \sqrt{T} x\right)\ell^{\xi,T}_t(x) + \left(\sqrt{T} x - \lfloor \sqrt{T} x \rfloor \right) \ell^{\xi, T}_t(x+1),  \qquad t \geq 0,
    \quad x \in \mathbb{R},\]
    where $\lfloor x \rfloor$ designates the integer part of $x$, and $\lceil x \rceil := \lfloor x \rfloor + 1$.
    Then, we have the joint convergences in law 
    \[ \left(\frac{1}{\sqrt{T}} \xi(Tt) \right)_{t \geq 0} \Longrightarrow (B_t)_{t \geq 0} \qquad \hbox{in} \quad D  \]
     for the Skorohod topology and 
    \[(\tilde{\ell}^{\xi,T}_t(x))_{x \in \mathbb{R}, t \geq 0} \Longrightarrow  (L^B(t,x))_{x \in \mathbb{R}, t \geq 0} \qquad \hbox{in} \quad C([0,\infty) \times \mathbb{R}) \]
    for the topology of uniform convergence on compact sets. Here $(B_t)_{t \geq 0}$ is a Brownian motion started from $0$ with diffusion coefficient $\sqrt{2}$ and $(L^B(t,x))_{x \in \mathbb{R}, t \geq 0}$ is its family of local times.  
\end{lemma}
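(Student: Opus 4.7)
The first convergence is the functional central limit theorem for the continuous-time simple random walk: since $\xi$ is a martingale with $\pm 1$ jumps at total rate $2$ and predictable quadratic variation $\langle \xi\rangle_t = 2t$, the martingale invariance principle gives $T^{-1/2}\xi(T\cdot) \Rightarrow B$ in $D$, where $B$ is a Brownian motion with diffusion coefficient $\sqrt{2}$. By Skorohod's representation theorem, I would realise the rescaled walks on a common probability space so that this convergence holds almost surely, and since the limit is continuous, the convergence is uniform on compact time intervals.

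For the joint convergence with local times, the natural strategy splits into tightness of $\{\tilde{\ell}^{\xi,T}\}_{T \geq 1}$ in $C([0,\infty)\times\mathbb{R})$ equipped with the uniform-on-compacts topology, and identification of any subsequential limit via the occupation time formula. For tightness, I would establish uniform-in-$T$ moment estimates
\[
\mathbb{E}\bigl[|\tilde{\ell}^{\xi,T}_t(x) - \tilde{\ell}^{\xi,T}_s(y)|^{2p}\bigr] \leq C_p \bigl(|t-s|^{p} + |x-y|^{p}\bigr), \qquad T \geq 1,
\]
for some integer $p \geq 2$, together with $\mathbb{E}[\tilde{\ell}^{\xi,T}_t(x)^{2p}] \leq C_p(1+t)^p$, from which tightness follows by Kolmogorov's criterion. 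Temporal regularity is immediate from the monotonicity of $t \mapsto \ell^{\xi,T}_t(x)$ and standard moment bounds on the time spent by $\xi$ at a site. Spatial regularity is more delicate: I would decompose $\ell^{\xi,T}_t(y) - \ell^{\xi,T}_t(x)$ along the successive excursions of $\xi$ between the two sites $\lfloor \sqrt{T}x \rfloor$ and $\lfloor \sqrt{T}y \rfloor$, thereby expressing it as (the terminal value of) a martingale whose quadratic variation can be controlled via BDG.

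For identification, observe that for any $f \in C_c(\mathbb{R})$ the occupation time identity yields
\[
\int_0^t f\bigl(T^{-1/2}\xi(Ts)\bigr)\,ds = \int_\mathbb{R} f(x)\,\tilde{\ell}^{\xi,T}_t(x)\,dx + o(1),
\]
where the error term vanishes uniformly in $t$ on compacts by uniform continuity of $f$. On the Skorohod coupling the left-hand side converges a.s. to $\int_0^t f(B_s)\,ds = \int_\mathbb{R} f(x) L^B(t,x)\,dx$ by bounded convergence. Hence any weak subsequential limit $L$ of $\tilde{\ell}^{\xi,T}$ satisfies $\int f(x) L(t,x)\,dx = \int f(x) L^B(t,x)\,dx$ for all $f \in C_c(\mathbb{R})$ and all $t \geq 0$, which, by continuity in $(t,x)$ and density of $C_c$, forces $L = L^B$. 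The joint convergence of $(T^{-1/2}\xi(T\cdot),\tilde{\ell}^{\xi,T})$ then follows from joint tightness together with this unique identification of the limit.

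The main obstacle is the uniform-in-$T$ spatial regularity bound for the local time field. The excursion-martingale decomposition sketched above is the cleanest path, but requires care in bookkeeping stopping times and truncations so that the BDG constants and the exponents align under the $1/\sqrt{T}$ spatial rescaling; alternatively, one can invoke the second Ray-Knight theorem for the random walk, which identifies $y \mapsto \ell^{\xi,T}_\tau(y)$ (for appropriate stopping times $\tau$) with a squared-Bessel-type Markov chain, and then transfer moment estimates from the analogous, well-known Brownian identity.
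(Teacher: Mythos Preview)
Your overall strategy is correct and matches the paper's in structure: tightness via Kolmogorov-type moment bounds, then identification of the limit. The substantive difference is in how the spatial regularity estimate is obtained. The paper uses a discrete Tanaka formula: applying Dynkin's formula to $f_k(x)=\tfrac12|x-k|$ represents $\ell^{\xi,T}_t(x)$ as a bounded term plus a martingale, and then for the difference one takes $f_{k,j}=f_k-f_j$, which expresses $\ell^{\xi,T}_t(x)-\ell^{\xi,T}_t(y)$ as a term bounded by $|x-y|$ plus a martingale $M^T$ whose quadratic variation satisfies $\langle M^T\rangle_t \le 2T^{-1/2}\sum_{x\le z\le y,\ z\in T^{-1/2}\mathbb Z}\ell^{\xi,T}_t(z)$, i.e.\ the occupation time of the interval $[x,y]$. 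BDG together with Jensen and the a priori bound $E[\ell^{\xi,T}_t(z)^{p}]\le c_p\,t^{p/2}$ then give the estimate in one stroke. This is considerably cleaner than the excursion decomposition you sketch, which would require bookkeeping of crossing counts, handling of the last incomplete excursion, and a separate argument to turn the sum into a martingale with the right quadratic variation; your Ray--Knight alternative would also work but is heavier than what is needed here. Conversely, your identification step via the occupation time formula on a Skorohod coupling is spelled out more completely than in the paper, whose written proof concentrates on the tightness estimates and leaves the identification of the limit implicit.
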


\begin{proof}
We start by proving tightness of the linearly interpolated local times $\tilde{\ell}^{T}_t(x)$, $(t,x) \in [0,\infty) \times \mathbb{R}$, in $C([0,\infty) \times \mathbb{R})$. We will actually show that for all $p>1$, and all $M>0$ there exists $C>0$ such that
\begin{equation}
\label{eq.tightness.lvt.space}
E^\omega_0[|\tilde{\ell}^{T}_t(x) - \tilde{\ell}^{T}_t(y)|^{2p}] \leq C \, |x-y|^p,
\end{equation} 
and 
\begin{equation}
\label{eq.tightness.lvt.time}
E^{\omega}_0[|\tilde{\ell}^{T}_t(x) - \tilde{\ell}^{T}_s(x)|^{2p}] \leq C \, |t-s|^p, 
\end{equation} 
for all $t,s \in [0,M]$ and $x,y \in [-M,M]$. Since $\tilde{\ell}^{T}_t(\cdot)$ is the piecewise linear interpolation of $\ell^{T}_t(\cdot)$, by a standard argument, equation \eqref{eq.tightness.lvt.space} will follow upon proving 
\begin{equation}
\label{eq.tightness.lt}
E^\omega_0[|\ell^{T}_t(x) - \ell^{T}_t(y)|^{2p}] \leq C \, |x-y|^p, \qquad t \in [0,M], \quad x,y \in [-M,M] \cap \frac{1}{\sqrt{T}} \mathbb{Z}. 
\end{equation}  
To prove \eqref{eq.tightness.lt}, we use a discrete Tanaka formula. 
Namely, for all $k \in \mathbb{Z}$, we apply Dynkin's formula with the function $f_k : x \mapsto \frac{1}{2} |x-k|$ (see e.g. Proposition B.1 in \cite{Mourrat2012CLT}) : noting that $\mathcal{L} f_k (x) = \mathbf{1}_{\{k\}}(x)$, while the carr\'{e} du champ operator $\Gamma$ applied to $f_k$ yields $\Gamma(f_k,f_k)(x) = 1/2$ for all $x \in \mathbb{Z}$,  we get 
\[\frac{1}{2} |\xi(t)-k| = \frac{1}{2} |k| + \int_0^{t} \mathbf{1}_{\{\xi(s) = k\}} ds + M_{f_k}(t), \qquad t \geq 0,  \]
where $M_{f_k}$ is a martingale with quadratic variation 
\[\langle M_{f_k}, M_{f_k} \rangle_t = \frac{t}{2} \leq t. \]
Therefore, if $x \in \frac{1}{\sqrt{T}} \mathbb{Z}$, we get
\[\tilde{\ell}^{T}_t(x) := \frac{1}{ \sqrt{T}}  \int_0^{Tt} \mathbf{1}_{\{\xi(s) = \sqrt{T} x \}} ds =\frac{1}{2\sqrt{T}}  \left( \left|\xi(Tt) - \sqrt{T}x \right| -   |\sqrt{T} x| \right) - \frac{1}{\sqrt{T}} M_{f_{\sqrt{T}x}}(Tt),  \]
whence, applying the triangle inequality and the inequality $(a+b)^p \leq 2^p (a^p + b^p)$, we get
\[E^\omega_0[\tilde{\ell}^{T}_t(x)^{2p}] \leq 2^p \left( E^\omega_0 \left[ \left| \frac{1}{\sqrt{T}} \xi(Tt) \right|^{2p} \right] + E^\omega_0 \left[ \left|  \frac{1}{\sqrt{T}} M_{f_{\sqrt{T}x}}(Tt) \right|^{2p} \right] \right) \]
Note that $\left(\frac{1}{\sqrt{T}} \xi(Tt) \right)_{t \geq 0}$ and $\left( \frac{1}{\sqrt{T}} M_{f_{\sqrt{T}x}}(Tt) \right)_{t \geq 0}$ are both martingales with quadratic variation bounded by $\frac{1}{T} Tt = t$, so, by the BDG inequality, both expectations above are bounded by $t^p$, up to a constant depending only on $p$.
Therefore we obtain the following bound which is uniform in $x \in \frac{1}{\sqrt{T}} \mathbb{Z}$:
\begin{equation}
\label{eq.bound.lp.lt}
E^\omega_0[(\ell^{T}_t(x))^{2p}] \leq c(p) \, t^p, 
\end{equation}
where $c(p)>0$ depends only on $p$. Let now $k,j \in \mathbb{Z}$ with $k < j$ and consider the test function $f_{k,j} := f_{k} - f_{j}$. Then $\mathcal{L} f_{k,j}(x) =  \left( \mathbf{1}_{\{k\}}(x) - \mathbf{1}_{\{j\}}(x)  \right)$ and 
\[ \Gamma(f_{k,j},f_{k,j})(x) = \begin{cases}
2 \quad &\text{if} \quad k < x <j \\ 
1 \quad  &\text{if} \quad x =k \, \, \text{or} \, \, x=j \\ 
0 \quad &\text{otherwise},
\end{cases} \]
so in particular $\Gamma(f_{k,j},f_{k,j})(x) \leq 2 \mathbf{1}_{\{k \leq x \leq j\}}$. Hence,
\[\frac{1}{2} \left\{|\xi(t)-k|  - |\xi(t)-j|\right\}= \frac{1}{2} (|k|-|j|) + \int_0^{t} \left(\mathbf{1}_{\{\xi(s) = k\}} - \mathbf{1}_{\{\xi(s) = j\}} \right) ds + M_{f_{k,j}}(t), \qquad t \geq 0,  \]
where $(M_{f_{k,j}}(t))_{t \geq 0}$ is a martingale with quadratic variation $\langle M_{f_{k,j}}, M_{f_{k,j}} \rangle_t \leq 2 \int_0^t \mathbf{1}_{\{k \leq \xi(s) \leq j\}} \, ds$. 
We deduce that, for all $x,y \in \frac{1}{\sqrt{T}} \mathbb{Z}$ with $x<y$,
\[\ell^{V,T}_t(x) - \ell^{V,T}_t(y) = \frac{1}{2\sqrt{T}} \left\{ \left( |\xi(T t)-\sqrt{T} x| - |\sqrt{T} x| \right) - \left(|\xi(T t)-\sqrt{T} y| - |\sqrt{T} y| \right)  \right\} - M^T(t) \]
where the martingale $M^T(t) := \frac{1}{\sqrt{T}} M_{f_{\sqrt{T}x,\sqrt{T}y}}(tT)$, $t \geq 0$, has quadratic variation
$$\langle M^T,M^T \rangle_t = \frac{1}{T} \int_{0}^{Tt} \Gamma(f_{\sqrt{T}x,\sqrt{T}y},f_{\sqrt{T}x,\sqrt{T}y})(\xi(s)) \, ds \leq \frac{2}{T} \int_0^{Tt} \mathbf{1}_{\{\sqrt{T} x \leq \xi (s) \leq \sqrt{T} y \}} ds, $$
that is 
\[\langle M^T,M^T \rangle_t  \leq \frac{2}{\sqrt{T}} \sum_{\substack{x \leq z \leq y \\ z \in \frac{1}{\sqrt{T}} \mathbb{Z}}} \tilde{\ell}^{T}_t(z).\]
Thus, using the triangle inequality once more, we get
\[\begin{split}
E^\omega_0[|\tilde{\ell}^{T}_t(x) - \tilde{\ell}^{T}_t(y)|^{2p}] &\leq 2^{2p} \bigg\{ |x-y|^{2p} + E^\omega_0 \bigg[ \bigg(\frac{2}{\sqrt{T}} \sum_{\substack{x \leq z \leq y \\ z \in \frac{1}{\sqrt{T}} \mathbb{Z}}} \tilde{\ell}^{T}_t(z) \bigg)^{p} \bigg] \bigg\}.
\end{split}\]
Note that the sum in the second term of the right-hand side has $\sqrt{T}|y-x| +1$ terms. 
Since $x<y$ and $x,y \in \frac{1}{\sqrt{T}} \mathbb{Z}$, in particular $1 \leq \sqrt{T}|y-x|$. Therefore, using Jensen's inequality as well as the estimate \eqref{eq.bound.lp.lt} with $p$ replaced by $p/2$,  
we deduce that
\begin{equation}
\label{eq:bound.proof.tight}
E^\omega_0[|\tilde{\ell}^{T}_t(x) - \tilde{\ell}^{T}_t(y)|^{2p}]  \leq C(p) \left\{|x-y|^{2p} + t^{p/2} |y-x|^p \right\},
\end{equation}
where $C(p)>0$ depends only on $p$,
and the claimed inequality \eqref{eq.tightness.lt} follows. 
\end{proof}

\subsection{A few topological facts}

We recall the following fact, the proof of which is standard.

\begin{lemma}
\label{lem:conv.inv}
    Let $(\varphi_n)_{n \geq 0}$ a sequence of continuous strictly increasing functions from $[0,\infty)$ onto $[0,\infty)$. We assume that $\varphi_n (t) \underset{n \to \infty}{\longrightarrow} \varphi(t)$, uniformly over compact intervals, where $\varphi$ is a continuous, strictly increasing function. Then, we have the convergence
    \[ \varphi_n^{-1} (u) \underset{n \to \infty}{\longrightarrow} \varphi^{-1}(u) \]
    locally uniformly over $u \in [0,\infty)$.
    \end{lemma}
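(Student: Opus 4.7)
The plan is to derive the result from the standard ``monotonicity trap'' argument, which trades the strict monotonicity of $\varphi$ against uniform closeness of the $\varphi_n$ on compact intervals. The only ingredients needed are the continuity and strict monotonicity of $\varphi$, together with the local uniform convergence $\varphi_n \to \varphi$ (surjectivity of each $\varphi_n$ onto $[0,\infty)$ will be used only to extract $\varphi(0) = 0$, via $\varphi_n(0) = 0$).

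First, I would fix an arbitrary $U > 0$ on which to prove uniform convergence, and pass to the compact interval $[0, T]$ with $T := \varphi^{-1}(U) + 1$; the hypothesis guarantees $\varphi_n \to \varphi$ uniformly on $[0, T]$. Next, given $\epsilon \in (0, 1)$, the heart of the argument is the compactness estimate
\[
\eta_\epsilon := \inf_{s \in [0, \varphi^{-1}(U)]} \min\bigl( \varphi(s + \epsilon) - \varphi(s), \ \varphi(s) - \varphi((s - \epsilon) \vee 0) \bigr) > 0,
\]
where strict positivity follows from the continuity and strict monotonicity of $\varphi$ combined with compactness of $[0, \varphi^{-1}(U)]$. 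I would then pick $n_0$ so that $\sup_{t \in [0, T]} |\varphi_n(t) - \varphi(t)| < \eta_\epsilon / 2$ for all $n \geq n_0$.

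The monotonicity trap proper then goes as follows. For $u \in [0, U]$, set $s := \varphi^{-1}(u) \in [0, \varphi^{-1}(U)]$. If one had $\varphi_n^{-1}(u) \geq s + \epsilon$, monotonicity of $\varphi_n$ would give $\varphi_n(s + \epsilon) \leq u = \varphi(s)$, whence $\varphi(s + \epsilon) - \varphi_n(s + \epsilon) \geq \varphi(s + \epsilon) - \varphi(s) \geq \eta_\epsilon$, contradicting the choice of $n_0$. A symmetric argument rules out $\varphi_n^{-1}(u) \leq s - \epsilon$, and the combined bound yields $|\varphi_n^{-1}(u) - \varphi^{-1}(u)| \leq \epsilon$ uniformly in $u \in [0, U]$.

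I do not anticipate any serious obstacle: the only subtlety is that the modulus $\eta_\epsilon$ must be uniform in $u$, which is precisely what the compactness of $[0, \varphi^{-1}(U)]$ secures. Everything else is a direct manipulation of the monotonicity of $\varphi$ and $\varphi_n$.
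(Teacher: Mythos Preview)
The paper does not actually supply a proof of this lemma; it merely introduces it with ``the proof of which is standard'' and moves on. Your monotonicity-trap argument is precisely the standard one and is essentially correct.

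One small slip: as written, your modulus $\eta_\epsilon$ equals $0$, since at $s=0$ the second minimand is $\varphi(0)-\varphi((0-\epsilon)\vee 0)=\varphi(0)-\varphi(0)=0$. This does not damage the argument, because the lower estimate $\varphi_n^{-1}(u)\le s-\epsilon$ is only in play when $s\ge\epsilon$ (for $s<\epsilon$ it is ruled out automatically by $\varphi_n^{-1}(u)\ge 0$). So it suffices to set
\[
\eta_\epsilon:=\inf_{s\in[0,\varphi^{-1}(U)]}\bigl(\varphi(s+\epsilon)-\varphi(s)\bigr)>0,
\]
which is strictly positive by continuity, strict monotonicity, and compactness, and which covers both directions of the trap (for the lower direction substitute $s'=s-\epsilon$). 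Also, a minor inaccuracy in your commentary: surjectivity of the $\varphi_n$ is not used only to get $\varphi(0)=0$; it is implicitly needed so that $\varphi_n^{-1}(u)$ is defined for every $u\in[0,U]$.
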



For the next lemma, inspired by \cite{billingsleySndEdition}, we denote by $D_0$ the set of non-decreasing c\`{a}dl\`{a}g functions from $[0,\infty)$ onto $[0,\infty)$. We recall the following fact, proven in page 151 of \cite{billingsleySndEdition}.

\begin{lemma}
\label{lem:cont_comp}
Let $\Psi : D \times D_0 \to D$ the composition map 
$$\Psi(x,\varphi) := x \circ \varphi, \qquad (x,\varphi) \in D \times D_0. $$
Then $\Psi$ is continuous at every pair $(x,\varphi) \in D \times D_0$ such that $x \in C$. 
\end{lemma}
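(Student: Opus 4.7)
The plan is to exploit two observations: (i) Skorohod convergence to a \emph{continuous} limit is equivalent to local uniform convergence, and (ii) composition with a uniformly continuous function preserves uniform convergence. Since $x \in C$, observation (i) bypasses the subtle interaction between time changes coming from the $x$-convergence and those coming from the $\varphi$-convergence, which is where difficulties would otherwise arise.

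First, I would take sequences $x_n \to x$ in $D$ and $\varphi_n \to \varphi$ in $D_0$ with $x \in C$, fix an arbitrary $T > 0$, and work on $[0,T]$. By definition of the Skorohod topology, there exist continuous strictly increasing bijections $\lambda_n$ of $[0,T]$ onto itself such that $\|\lambda_n - \mathrm{id}\|_{\infty, [0,T]} \to 0$ and $\|\varphi_n \circ \lambda_n - \varphi\|_{\infty, [0,T]} \to 0$. Since $x$ is continuous, the convergence $x_n \to x$ is in fact uniform on every compact subinterval of $[0,\infty)$, as a standard consequence of the definition of the Skorohod metric.

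Next, using $\lambda_n$ as candidate time changes for $x_n \circ \varphi_n$, I would bound via the triangle inequality
\[ \bigl| x_n(\varphi_n(\lambda_n(t))) - x(\varphi(t)) \bigr| \leq \bigl| x_n(\varphi_n(\lambda_n(t))) - x(\varphi_n(\lambda_n(t))) \bigr| + \bigl| x(\varphi_n(\lambda_n(t))) - x(\varphi(t)) \bigr|. \]
The uniform convergence $\varphi_n \circ \lambda_n \to \varphi$ on $[0,T]$ confines the arguments of $x_n$ and $x$ above to a common compact interval $[0,R]$ for $n$ large, so the supremum over $t \in [0,T]$ of the first term is dominated by $\|x_n - x\|_{\infty, [0,R]} \to 0$. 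The second term vanishes by uniform continuity of $x$ on $[0,R]$ applied to this same uniform convergence.

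Combining these estimates yields $\|(x_n \circ \varphi_n) \circ \lambda_n - x \circ \varphi\|_{\infty, [0,T]} \to 0$, which together with $\lambda_n \to \mathrm{id}$ uniformly gives Skorohod convergence of $x_n \circ \varphi_n$ to $x \circ \varphi$ on $[0,T]$. Since $T > 0$ is arbitrary, this yields the claimed convergence in $D([0,\infty))$. I do not anticipate real obstacles: the result is a textbook fact (referenced to \cite{billingsleySndEdition}), and the only content is to verify that continuity of the limit $x$ prevents the pathology whereby different sequences of time changes would otherwise be needed for the $x_n \to x$ and the $\varphi_n \to \varphi$ convergences, respectively.
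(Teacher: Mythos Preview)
Your argument is correct and is essentially the standard proof; the paper itself does not give a proof but merely cites page~151 of \cite{billingsleySndEdition}, so there is no alternative approach to compare against. One small technical point: when you restrict to $[0,T]$ and invoke time changes $\lambda_n$ that are bijections of $[0,T]$, you are implicitly using convergence in $D[0,T]$, which is equivalent to convergence in $D[0,\infty)$ restricted to $[0,T]$ only when $T$ is a continuity point of the limit $\varphi$ (and hence of $x\circ\varphi$, since $x\in C$). Since $\varphi$ is c\`adl\`ag, such $T$ form a co-countable set, so the passage ``since $T>0$ is arbitrary'' goes through via the usual characterization of $D[0,\infty)$-convergence (e.g.\ Theorem~16.2 in \cite{billingsleySndEdition}); alternatively, you could work directly with time changes $\lambda_n:[0,\infty)\to[0,\infty)$ and uniform convergence on compacts, which avoids this detour entirely.
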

 
As a result follows the following fact
 
\begin{lemma}
\label{lem:composition}
Let $(X^{(T)},\varphi^{(T)})_{T >0}$ be a family of random variables with values in $D \times D_0$, and converging in law to some limit 
$(X,\varphi)$ in $D \times D_0$, as $T \to \infty$. We assume $X $ has continuous paths a.s. Then the random variable $X^{(T)} \circ \varphi^{(T)}$ converges in law to $X \circ \varphi$ as $T \to \infty$, for the Sokorohod topology on $D$. 
\end{lemma}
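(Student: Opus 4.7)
The plan is a direct application of the continuous mapping theorem, with Lemma \ref{lem:cont_comp} supplying the necessary continuity statement. First I would identify the map of interest as $\Psi : D \times D_0 \to D$, $(x,\varphi) \mapsto x \circ \varphi$, and observe that by Lemma \ref{lem:cont_comp} its set of discontinuity points is contained in $\{(x,\varphi) \in D \times D_0 : x \notin C\}$.

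Next I would check that this set is negligible under the law of the limit $(X,\varphi)$. By the assumption that $X$ has continuous paths almost surely, $\mathbb{P}((X,\varphi) \in D \times D_0,\, X \notin C) = 0$, so the set of discontinuities of $\Psi$ has zero mass under the distribution of $(X,\varphi)$.

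It then remains to invoke the continuous mapping theorem (see e.g. Theorem 2.7 of \cite{billingsleySndEdition}), applied to the measurable map $\Psi$ on the Polish space $D \times D_0$, to conclude that the weak convergence $(X^{(T)}, \varphi^{(T)}) \Rightarrow (X,\varphi)$ in $D \times D_0$ propagates to $\Psi(X^{(T)}, \varphi^{(T)}) = X^{(T)} \circ \varphi^{(T)} \Rightarrow X \circ \varphi = \Psi(X,\varphi)$ in $D$ with the Skorohod topology, which is exactly the claim.

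There is essentially no obstacle here: the statement is an immediate corollary of the cited continuity lemma together with the continuous mapping theorem, and the only thing to verify is the a.s. continuity of the limit process $X$, which is given by hypothesis. The only minor subtlety is to make sure that $\Psi$ is measurable (which follows from its continuity on the full-measure set $C \times D_0$), and that the product topology on $D \times D_0$ is the one implicit in the hypothesis of joint convergence in law, so that Lemma \ref{lem:cont_comp} applies verbatim.
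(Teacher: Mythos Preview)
Your proposal is correct and matches the paper's approach exactly: the paper simply states that the lemma ``follows'' from Lemma \ref{lem:cont_comp}, i.e., it is an immediate consequence of the continuous mapping theorem applied to $\Psi$, using that the discontinuity set of $\Psi$ has zero mass under the law of $(X,\varphi)$ by the hypothesis $X \in C$ a.s.
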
 

\begin{remark}
We can generalize the above statement to two-dimensional processes. More precisely, by Lemma \ref{lem:cont_comp}  we see that the map $\Psi^{\otimes 2} : (D \times D_0)^2 \to D \times D$ defined by
$$ \Psi^{\otimes 2}((x_1,\varphi_1),(x_1,\varphi_2)) = (x_1 \circ \varphi_1, x_2 \circ \varphi_2), \qquad ((x_1,\varphi_1),(x_2,\varphi_2)) \in (D \times D_0)^2  $$
is continuous at every $(x_1,\varphi_1),(x_2,\varphi_2)$ such that $x_1,x_2 \in C$. Hence if 
$$((X_1^{(T)},\varphi_1^{(T)}),(X_2^{(T)},\varphi_2^{(T)})), \qquad T>0,$$
is a family of random variables with values in $(D \times D_0)^2$, and converging in law to some limit $((X_1,\varphi_1), (X_2,\varphi_2))$ in $(D \times D_0)^2$, as $T \to \infty$, and $X_1$ and $X_2$ both have continuous paths a.s., then the random variable $(X_1^{(T)} \circ \varphi_1^{(T)},X_2^{(T)} \circ \varphi_2^{(T)})$ converges in law to $(X_1 \circ \varphi_1, X_2 \circ \varphi_2)$ as $T \to \infty$, on $D \times D$ (where $D$ is equipped with the Sokorohod topology). 

\end{remark}

From the above lemma, using the subsequential characterization of tightness, we at once obtain the following result:

\begin{corollary}
\label{cor:comp_tightness}
Let $(X^{(T)},\varphi^{(T)})_{T >0}$ be a family of random variables with values in $D \times D_0$. We assume that the sequence of random variables $(X^{(T)})_{T>0}$ is tight in $D$, that $(\varphi^{(T)})_{T >0}$ is tight in $D_0$, and that each subsequential limit of $(X^{(T)})_{T>0}$ has continuous paths a.s. Then the sequence of random variables $(X^{(T)}\circ\varphi^{(T)})_{T>0}$ is tight in $D$. 
\end{corollary}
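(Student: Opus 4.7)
The plan is to invoke Prokhorov's theorem (in its subsequential form) to reduce the tightness of $(X^{(T)} \circ \varphi^{(T)})_{T>0}$ in $D$ to a question about subsequential limits, and then apply Lemma \ref{lem:composition} directly.

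Concretely, I would proceed as follows. Pick any sequence $T_n \to \infty$; the goal is to extract a subsequence along which $X^{(T_n)} \circ \varphi^{(T_n)}$ converges in law in $D$. Since by hypothesis $(X^{(T)})_{T>0}$ is tight in $D$ and $(\varphi^{(T)})_{T>0}$ is tight in $D_0$, and both $D$ and $D_0$ are Polish (for the Skorohod topology and its restriction, respectively), marginal tightness implies joint tightness of the pair $(X^{(T)}, \varphi^{(T)})_{T>0}$ in the product space $D \times D_0$ equipped with the product topology. By Prokhorov's theorem, we can therefore extract a further subsequence $(T_{n_k})_{k \geq 1}$ along which $(X^{(T_{n_k})}, \varphi^{(T_{n_k})})$ converges jointly in law to some pair $(X, \varphi)$ with values in $D \times D_0$.

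By the marginal assumption, $X$ has continuous paths almost surely (any subsequential limit of $(X^{(T)})_{T>0}$ in $D$ does). We are then exactly in the setting of Lemma \ref{lem:composition}, which yields $X^{(T_{n_k})} \circ \varphi^{(T_{n_k})} \Longrightarrow X \circ \varphi$ in $D$ as $k \to \infty$. Since every sequence $T_n \to \infty$ admits such a weakly convergent subsequence, the family $(X^{(T)} \circ \varphi^{(T)})_{T>0}$ is relatively compact, hence tight, in $D$ (again by Prokhorov, using that $D$ is Polish).

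There is no real obstacle here: the only point requiring a moment's thought is the passage from marginal tightness of $(X^{(T)})$ and $(\varphi^{(T)})$ to joint tightness of the pair, which is a standard consequence of the characterization of tightness in product Polish spaces (a product of relatively compact sets is relatively compact in the product topology). Everything else is a direct appeal to Prokhorov's theorem and to Lemma \ref{lem:composition}.
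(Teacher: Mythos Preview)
Your proof is correct and matches the paper's approach exactly: the paper simply states that the corollary follows ``from the above lemma, using the subsequential characterization of tightness,'' which is precisely the Prokhorov argument you have spelled out in detail (joint tightness from marginal tightness, extraction of a convergent subsequence for the pair, then Lemma~\ref{lem:composition}). One cosmetic remark: tightness of the full family $\{X^{(T)}\circ\varphi^{(T)}: T>0\}$ requires checking arbitrary sequences $(T_n)$ in $(0,\infty)$, not only $T_n\to\infty$, but your argument works verbatim for any such sequence.
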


\bibliographystyle{amsplain}
\bibliography{line_model_in_degenerate_environment_to_submit}

\end{document}